\newtheoremstyle{kai}
{3pt}{3pt}{}{}{\bfseries}{.}{.5em}{}
\def\EquationsBySection{\def\theequation
{\thesection.\arabic{equation}}%
\@addtoreset{equation}{section}}
\newcommand\old[1]{}
\newcommand{\pend}{\hfill \thicklines \framebox(6.6,6.6)[l]{}}
\renewenvironment{proof}{\noindent {\it  Proof.} \rm}{\pend}
\newtheorem{theorem}{Theorem}[section]
\newtheorem{lemma}{Lemma}[section]
\newtheorem{corollary}{Corollary}[section]
\newtheorem{proposition}{Proposition}[section]
\newtheorem{definition}{Definition}[section]
\journal{}
\begin{document}

\begin{frontmatter}

\title{\bf\Large Stationary Distributions of Second Order Stochastic Evolution Equations with Memory in\\ Hilbert Spaces}


\author{
{\bf Kai Liu}}
\address{
 Department of Mathematical Sciences,\\ School of Physical Sciences,\\
The University of Liverpool,\\ Liverpool, L69 7ZL, U.K.\\ E-mail: k.liu@liverpool.ac.uk}

\begin{abstract}

 In this paper, we consider stationarity of a class of second-order stochastic evolution equations with memory, driven by Wiener processes or L\'evy jump processes, in Hilbert spaces. The strategy is to formulate by reduction some first-order systems in connection with the stochastic equations under investigation. We develop asymptotic behavior of dissipative second-order equations and then apply them to time delay systems through Gearhart-Pr\"uss-Greiner's theorem. The stationary distribution of the system under consideration is the projection on the first coordinate of the corresponding stationary results of a lift-up stochastic system without delay on some product Hilbert space. Last, an example of stochastic damped delay wave equations with memory is presented to illustrate our theory.  \\ \\
\end{abstract}
\end{frontmatter}

\noindent {\bf Keywords:} Stationary solutions; Second order stochastic evolution equations; Hereditary term.

\noindent {\bf 2000 Mathematics Subject Classification(s):} 60H15, 60G15, 60H05.



\newpage
\section{\large Introduction}

For any Hilbert spaces $H$ and $K$, we denote by ${\mathscr L}(H, K)$ the space of all bounded linear operators from $H$ into $K$. If $H=K$, we simply write ${\mathscr L}(H)$ for ${\mathscr L}(H, H)$.
Let $r>0$ and consider the following second-order stochastic abstract Cauchy problem with memory on a Hilbert space ${H}$,
\begin{equation}
\label{12/09/17(13489)}
\begin{cases}
d\Big(\displaystyle\frac{du(t)}{dt}\Big)  + Au(t)dt = Bu'(t)dt+ Mu_tdt + Nu'_tdt  + R(u(t), u'(t), u_t, u'_t)dZ(t),\hskip 15pt t\ge 0,\\
u(0)=\phi_{0, 1},\,\,\,u'(0)=\phi_{0, 2},\,\,\,u_0=\phi_{1,1},\,\,\,u'_0 =\phi_{1, 2},
\end{cases}
\end{equation}
where $\phi_{i,j}$, $i=0,1,\,j=1,2$, are appropriate initial data and $Z$ could be an abstract $Q$-Wiener process or L\'evy jump process. Here $u_t(\theta) := u(t+\theta)$, $u'_t(\theta) := u'(t+\theta)$, $\theta\in [-r, 0]$ and $(A, {\mathscr D}(A))$, $(B, {\mathscr D}(B))$ are two linear operators from $H$ into itself,  $M,\,N$ are linear mappings from $L^2([-r, 0], H)$ into $H$, respectively, and $R$ is a measurable nonlinear mapping from $ H\times H\times L^2([-r, 0], H)\times L^2([-r, 0], H)$ to some space of linear operators (see Sections 4, 5 and 6 for precise definitions). In this work, we intend to consider stationarity of solutions for a class of stochastic second-order evolution equations with memory, i.e.,  (\ref{12/09/17(13489)}).

We remark that there exists an exhaustive literature on deterministic second-order abstract Cauchy problems, e.g., see Fattorini \cite{faho85} and references therein.
 There are some works devoted to this type of equations with memory features, and we refer, for instance, to \cite{kjern00} for some fundamental statements and to \cite{absp05} for some recent results
  about this topic. On the other hand, it is worth pointing out that stochastic abstract  second-order Cauchy problems with memory such as (\ref{12/09/17(13489)}) have not been investigated in depth until now. The only works known to the current author in the existing literature are \cite{flhjg2014, flzhg2014, yrds09, rysr12} in which the well-posedness and stability of stochastic systems with an infinite delay driven by a jump process and a neutral or impulsive term are considered.

  The whole organisation of this work is as follows. After having developed some necessary tools to study the well-posedness of dissipative linear wave equations in Section 2, we shall consider the asymptotic behavior of abstract deterministic second-order evolution equations in Section 3 and some generalized second-order evolutions with memory in Section 4. We shall show in Section 5 how to reduce a second-order stochastic equation with memory to an abstract stochastic Cauchy problem on appropriate product Hilbert spaces. Consequently, we can use in Section 5 some recent methods to investigate the stationarity of the stochastic systems under investigation. In Section 6, we carry on to consider the stationarity of stochastic delay  systems driven by a L\'evy jump process. Last, we apply our theory to an illustrative example, i.e., a stochastic damped wave equation with memory in Section 7.

\section{\large Dissipative Wave Equations}

Consider the following second-order abstract Cauchy problem on a Hilbert space ${H}$,
\begin{equation}
\label{12/09/17(1)}
\begin{cases}
u''(t)  + Au(t) = Bu'(t),\hskip 15pt t\ge 0,\\
u(0)=\phi_{0, 1},\,\,\,u'(0)=\phi_{0, 2},
\end{cases}
\end{equation}
where $B: {\mathscr D}(B)\subset H\to H$ is some linear, possibly unbounded, operator and
 $A: {\mathscr D}(A)\subset H\to H$ is a self-adjoint linear operator such that
 \[
 \langle Au, u\rangle_H \ge \alpha \|u\|^2_V,\hskip 20pt \alpha>0,\hskip 20pt \forall\, x\in {\mathscr D}(A),\]
 where $V := {\mathscr D}(A^{1/2})$, equipped with the usual graph norm $\|u\|_V := \|A^{1/2}u\|_H$, $u\in V$.
By using the standard reduction $y(t) := \displaystyle{u(t)\choose u'(t)}$, we intend to transform (\ref{12/09/17(1)}) into a first-order system
\begin{equation}
\label{12/09/17(02)}
\begin{cases}
y'(t) = \Lambda y(t),\hskip 15pt t\ge 0,\\
y(0) =\displaystyle{\phi_{0, 1}\choose \phi_{0, 2}}\in {\mathbb H},
\end{cases}
\end{equation}
for the matrix operator
\[
\Lambda = \left(\begin{array}{cc}
0& I\\
-A& B
\end{array}\right)\]
with domain
\[
{\mathscr D}(\Lambda) = {\mathscr D}(A)\times ({\mathscr D}(A^{1/2})\cap {\mathscr D}(B))\]
in the product Hilbert space ${\mathbb H}:= V\times {H}$, equipped with the usual product space inner product and norm. To this end, we remark that $A^{1/2}: V\to H$ is a unitary operator. This allows us to consider a unitary operator
\[
\Sigma =  \left(\begin{array}{cc}
A^{1/2}& 0\\
0& I
\end{array}\right)\in {\mathscr L}({\mathbb H}, \tilde{\mathbb H}),
\]
where $I$ is the identity operator on $H$ and $\tilde{\mathbb H} := H\times H$, equipped with the usual product space inner product and norm. Further, define a matrix operator on ${\tilde{\mathbb H}}$,
\[
\Lambda_0 = \left(\begin{array}{cc}
0& A^{1/2}\\
-A^{1/2}& B
\end{array}\right)\]
with domain
\[
{\mathscr D}(\Lambda_0) = {\mathscr D}(A^{1/2})\times ({\mathscr D}(A^{1/2})\cap {\mathscr D}(B)).\]
Then we know that $\Sigma$ defines a unitary equivalence between $\Lambda$ and $\Lambda_0$, i.e.,
\[
\Lambda_0 = \Sigma\Lambda \Sigma^{-1}.\]
Hence, in order to deal with the well-posedness of (\ref{12/09/17(02)}) it suffices to consider the same problem of the following equation
\begin{equation}
\label{12/09/17(2)}
\begin{cases}
y'(t) = \Lambda_0 y(t),\hskip 15pt t\ge 0,\\
y(0) =\displaystyle{\phi_{0, 1}\choose \phi_{0, 2}}\in \tilde{\mathbb H},
\end{cases}
\end{equation}
in the product Hilbert space $\tilde{\mathbb H}$.

 To proceed further, we want to find conditions under which the inverse operator $\Lambda^{-1}_0$ of $\Lambda_0$ exists. In fact, if $\Lambda_0$ is invertible, we know by the closed graph theorem that $\Lambda^{-1}_0$ is a bounded linear operator on $\tilde{\mathbb H}$ and we can represent it in a matrix form
\[
\Lambda^{-1}_0 = \left(\begin{array}{cc}
U& V\\
W& S
\end{array}\right)\in {\mathscr L}(\tilde{\mathbb H}).\]
To identify $\Lambda^{-1}_0$ explicitly, we can formally conclude from $\Lambda^{-1}_0\Lambda_0 =I|_{{\mathscr D}(\Lambda_0)}$ that its entries satisfy
\[
\begin{split}
&Ux= {A^{-1/2}}BA^{-1/2}x\hskip 15pt \hbox{for all}\hskip 15pt x\in {\mathscr D}({A^{-1/2}}BA^{-1/2}),\\
&V=-{A^{-1/2}},\,\, W= A^{-1/2}\,\,\,\, \hbox{and}\,\,\,\, S=0.
\end{split}
\]
These operators give rise to a bounded inverse of $\Lambda_0$ if and only if
$A^{1/2}({\mathscr D}(B)\cap {\mathscr D}(A^{1/2}))$ is dense in ${H}$ and
${{A^{-1/2}}BA^{-1/2}}\in {\mathscr L}({H})$.
As a matter of fact, we have the following result.

\begin{theorem}
\label{05/10/2017(1)}
Assume that $B: {\mathscr D}(B)\subset H\to {H}$ is densely defined, closed and dissipative, i.e., $Re\,\langle Bu, u\rangle_{{H}}\le 0$ for all $u\in {\mathscr D}(B)$ and ${\mathscr D}(B)\cap {\mathscr D}(A^{1/2})$ is dense in $H$.
Then operator $\Lambda_0$ is densely defined, dissipative and closed. Further, suppose  that $A^{1/2}({\mathscr D}(B)\cap {\mathscr D}(A^{1/2}))$ is dense in ${H}$ and
${A^{-1/2}BA^{-1/2}}\in {\mathscr L}({H})$,
 then $\Lambda_0$ generates a contraction $C_0$-semigroup $e^{t\Lambda_0}$, $t\ge 0$, on $\tilde{\mathbb H}$, i.e., $\|e^{t\Lambda_0}\|\le 1$ for all $t\ge 0$.
\end{theorem}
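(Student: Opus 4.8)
The plan is to establish generation through the Lumer--Phillips theorem, so I would organize the argument around its three ingredients for $\Lambda_0$ on $\tilde{\mathbb H}$: a dense domain, dissipativity, and the range condition $\mathrm{Range}(\lambda I-\Lambda_0)=\tilde{\mathbb H}$ for some $\lambda>0$. The density of $\mathscr D(\Lambda_0)=\mathscr D(A^{1/2})\times(\mathscr D(A^{1/2})\cap\mathscr D(B))$ is the easy part: the first factor is dense because $A^{1/2}$ is self-adjoint and hence densely defined, the second is dense by the standing hypothesis that $\mathscr D(B)\cap\mathscr D(A^{1/2})$ is dense in $H$, and density in a product space follows coordinatewise.

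For dissipativity I would compute, for $y=(y_1,y_2)\in\mathscr D(\Lambda_0)$,
\[
\langle\Lambda_0 y,y\rangle_{\tilde{\mathbb H}}=\langle A^{1/2}y_2,y_1\rangle_H-\langle A^{1/2}y_1,y_2\rangle_H+\langle By_2,y_2\rangle_H .
\]
Since $A^{1/2}$ is self-adjoint, the first two terms are complex conjugates of one another, so their sum is purely imaginary; taking real parts leaves $\mathrm{Re}\,\langle\Lambda_0 y,y\rangle_{\tilde{\mathbb H}}=\mathrm{Re}\,\langle By_2,y_2\rangle_H\le 0$, by the dissipativity of $B$. This also makes $\Lambda_0$ closable. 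For the claimed closedness I would argue directly from the closedness of $A^{1/2}$ and of $B$: for a sequence $y^{(n)}\to y$ with $\Lambda_0 y^{(n)}\to z$, the first coordinate immediately forces $y_2\in\mathscr D(A^{1/2})$ with $A^{1/2}y_2=z_1$, while the coupled second coordinate needs a little more care; in any event closedness becomes automatic once a bounded inverse is exhibited, since operators with nonempty resolvent set are closed.

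To meet the range condition I would exploit the explicit inverse computed just before the statement. Under the two extra hypotheses the entries $U=A^{-1/2}BA^{-1/2}$, $V=-A^{-1/2}$, $W=A^{-1/2}$, $S=0$ are all bounded---recall $A^{-1/2}\in\mathscr L(H)$ because $A\ge\alpha>0$---and the density of $A^{1/2}(\mathscr D(B)\cap\mathscr D(A^{1/2}))$ in $H$ guarantees that this formal matrix is a genuine two-sided bounded inverse of $\Lambda_0$ on all of $\tilde{\mathbb H}$, i.e.\ $0\in\rho(\Lambda_0)$. Since the resolvent set is open, a neighbourhood of $0$, and in particular some $\lambda>0$, lies in $\rho(\Lambda_0)$, which yields $\mathrm{Range}(\lambda I-\Lambda_0)=\tilde{\mathbb H}$. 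Combining the dense domain, dissipativity and this range condition, the Lumer--Phillips theorem delivers the contraction $C_0$-semigroup with $\|e^{t\Lambda_0}\|\le 1$.

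The main obstacle is the rigorous justification of the inverse rather than the semigroup bookkeeping, which is routine once invertibility is in hand. Concretely, one must verify that the formally derived entries define a bounded operator that is both a left and a right inverse of $\Lambda_0$ on the correct domains: the boundedness of $U=A^{-1/2}BA^{-1/2}$ is exactly the content of the second extra hypothesis, whereas surjectivity---equivalently $\mathrm{Range}(\Lambda_0)=\tilde{\mathbb H}$---is precisely where the density of $A^{1/2}(\mathscr D(B)\cap\mathscr D(A^{1/2}))$ is indispensable, ensuring the candidate inverse maps into $\mathscr D(\Lambda_0)$ and not merely into its closure. Once $0\in\rho(\Lambda_0)$ is secured, closedness and the passage to a positive $\lambda$ follow for free, so this verification is the crux of the proof.
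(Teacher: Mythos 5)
Your proposal is correct and follows essentially the same route as the paper: dissipativity via the cancellation of the skew-adjoint off-diagonal terms leaving $\mathrm{Re}\,\langle Bv,v\rangle_H\le 0$, and the range condition obtained from the explicit bounded inverse together with the openness of the resolvent set, feeding into Lumer--Phillips. Your additional remarks on verifying that the formal matrix is a genuine two-sided inverse (and that closedness follows from a nonempty resolvent set) merely flesh out steps the paper declares immediate.
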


\begin{proof} It is immediate that $\Lambda_0$ is densely defined and closed. Now we show the dissipativity of $\Lambda_0$. By assumption, since $B$ is dissipative, we have for any $\displaystyle{u\choose v}\in {\mathscr D}({\Lambda_0})$ that
\[
\begin{split}
Re\Big\langle \Lambda_0{u\choose v}, {u\choose v}\Big\rangle_{\tilde{\mathbb H}} &= Re\Big\langle \left(\begin{array}{cc}
0& A^{1/2}\\
-A^{1/2}& B
\end{array}\right){u\choose v}, {u\choose v}\Big\rangle_{\tilde{\mathbb H}}\\
 & =  Re\langle A^{1/2}v, u\rangle_{H} + Re\langle -A^{1/2}u +Bv, v\rangle_{ H}\\
 &= Re\langle Bv, v\rangle_{H}\le 0,
 \end{split}
 \]
 i.e., $\Lambda_0$ is dissipative.

 On the other hand, by assumption, $\Lambda_0$ has a bounded linear inverse $\Lambda^{-1}_0$, $0\in \rho(\Lambda_0)$ which is an open set in ${\mathbb C}$. Hence, there exists a number $\lambda>0$ such that $\lambda\in \rho(\Lambda_0)$ and ${\mathscr R}(\lambda I -\Lambda_0)={\tilde{\mathbb H}}$. By the well-known Lumer-Phillips Theorem, this implies that $\Lambda_0$ generates a contraction $C_0$-semigroup on $\tilde{\mathbb H}$.
\end{proof}

\begin{corollary}
 Assume that $B: {\mathscr D}(B)\subset H\to {H}$ is closed, dissipative and ${\mathscr D}(A^{1/2})\subset {\mathscr D}(B)$, then $\Lambda_0$ generates a contraction $C_0$-semigroup $e^{t\Lambda_0}$, $t\ge 0$, on $\tilde{\mathbb H}$.
 \end{corollary}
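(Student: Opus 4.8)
The plan is to deduce the corollary directly from Theorem \ref{05/10/2017(1)} by showing that the stronger hypothesis ${\mathscr D}(A^{1/2})\subset {\mathscr D}(B)$ forces every assumption of that theorem. The crucial simplification is that this inclusion gives
\[
{\mathscr D}(B)\cap {\mathscr D}(A^{1/2}) = {\mathscr D}(A^{1/2}) = V,
\]
so all the density requirements in the theorem collapse to statements about $V$ alone, and only the operator-boundedness condition requires genuine work.

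First I would record the easy points. Since $A^{1/2}$ is self-adjoint, its domain $V={\mathscr D}(A^{1/2})$ is dense in $H$, hence so is ${\mathscr D}(B)\supset V$; together with the standing assumptions that $B$ is closed and dissipative, this secures the first block of hypotheses of Theorem \ref{05/10/2017(1)}. Because $A$ is self-adjoint and coercive, $A^{1/2}$ is boundedly invertible on $H$, so that $A^{-1/2}\in {\mathscr L}(H)$ with ${\mathscr R}(A^{-1/2})=V$ (this was already used implicitly in the computation of $\Lambda_0^{-1}$). Consequently
\[
A^{1/2}\big({\mathscr D}(B)\cap {\mathscr D}(A^{1/2})\big)=A^{1/2}(V)={\mathscr R}(A^{1/2})=H,
\]
which is trivially dense, disposing of the second density condition.

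The main point, and the only step that is not purely formal, is to check that $A^{-1/2}BA^{-1/2}\in {\mathscr L}(H)$. I would argue by the closed graph theorem applied to $BA^{-1/2}$. Since ${\mathscr R}(A^{-1/2})=V\subset {\mathscr D}(B)$, the operator $BA^{-1/2}$ is defined on all of $H$. To see it is closed, take $x_n\to x$ in $H$ with $BA^{-1/2}x_n\to y$; boundedness of $A^{-1/2}$ gives $A^{-1/2}x_n\to A^{-1/2}x$, and closedness of $B$ then yields $A^{-1/2}x\in {\mathscr D}(B)$ with $BA^{-1/2}x=y$. Hence $BA^{-1/2}$ is a closed, everywhere-defined operator on $H$, so by the closed graph theorem $BA^{-1/2}\in {\mathscr L}(H)$, whence $A^{-1/2}BA^{-1/2}=A^{-1/2}(BA^{-1/2})$ is bounded as a composition of bounded operators.

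With all hypotheses of Theorem \ref{05/10/2017(1)} now verified, its conclusion yields at once that $\Lambda_0$ generates a contraction $C_0$-semigroup on $\tilde{\mathbb H}$, which is precisely the assertion. The subtlety to watch is the closedness of $BA^{-1/2}$; I expect this to be the heart of the argument, the remaining steps being a direct specialisation of the theorem.
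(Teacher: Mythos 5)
Your proposal is correct and follows exactly the route the paper intends: the paper's proof simply asserts that all hypotheses of Theorem \ref{05/10/2017(1)} are satisfied, and you have supplied the verification, including the only nontrivial point (boundedness of $A^{-1/2}BA^{-1/2}$ via the closed graph theorem applied to the everywhere-defined closed operator $BA^{-1/2}$), which is precisely the standard argument the author leaves implicit.
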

 \begin{proof}
 The proof is straightaway since all the conditions in Theorem \ref{05/10/2017(1)} are satisfied on this occasion.
 \end{proof}

 In the sequel of this work, the conditions in Theorem \ref{05/10/2017(1)} are always assumed to hold.

\section{\large Asymptotic Behavior of Equations}

 In this section, we are concerned with the asymptotic behavior of equation
 (\ref{12/09/17(1)}). We first establish a result for a simple case, i.e., $B=\beta I$, $\beta\in {\mathbb R}$, whose argument is shaped by J. Zabczyk and the current author, to show that the strict dissipativity of $B$ is essential for the exponential stability of (\ref{12/09/17(02)}), i.e.,
 $\|e^{t{\Lambda}}\|\le Me^{-\mu t}$, $t\ge 0$, for some constants $M\ge 1$ and $\mu>0$.

    Suppose that  $B=\beta I$, $\beta\in {\mathbb R}$, in (\ref{12/09/17(1)}). First, note that from the definition of the resolvent sets $\rho(-A)$ and $\rho({\Lambda})$, we have that $\lambda\in \rho({\Lambda})$ if and only if $\lambda(\lambda -
\beta)\in \rho(-A)$. Indeed, $\lambda\in\rho({\Lambda})$ if and only if the following equation system defining resolvent operators
$$
\left(\begin{array}{cc}
\lambda I & -I\\ A & \lambda I -\beta I
\end{array}\right)
\left(\begin{array}{c}
y_1\\ y_2
\end{array}\right) =\left(\begin{array}{c}
z_1\\ z_2
\end{array}\right)
$$
has a unique solution in ${\mathscr D}({\Lambda})$ for any $(z_1, z_2)\in {\mathbb H}$, a situation which is possible if and only if $\lambda(\lambda-\beta)\in \rho(-A)$. Let $\sigma(\Lambda)$ denote the spectral set of ${\Lambda}$ and define the spectral bound $\omega_s({\Lambda})$ of $\Lambda$ by
\[
\omega_s({\Lambda}) = \sup\{Re\,\lambda:\, \lambda\in \sigma({\Lambda})\},\]
then we have
\[
\omega_s({\Lambda}) = \sup\{Re\,\lambda:\, \lambda(\lambda-\beta)\in \sigma({-A})\}.\]
In other words, $\omega_s({\Lambda})$ is the larger real part of the roots to the equation
\begin{equation}
\label{29/09/17(1)}
\lambda^2 -\beta\lambda -\omega_s(-A)=0
\end{equation}
with $\lambda(\lambda-\beta)\in \sigma({-A})$.
Since $-A$ is self-adjoint, $\sigma(-A)$ is a subset of ${\mathbb R}$ and so we have $\lambda(\lambda-\beta)\in {\mathbb R}$. Let $\lambda=a+ib$. Since $(a+ib)(a+ib -\beta)\in {\mathbb R}$, it follows that
\[
b=0\hskip 20pt \hbox{or}\hskip 20pt b\not= 0, \,\,2a-\beta=0.\]
If $b=0$, this means that $\omega_s({\Lambda})$ equals the larger real root of (\ref{29/09/17(1)}), which is
\[
\omega_s({\Lambda}) = \displaystyle{\frac{\beta}{2}}+ \sqrt{{\frac{\beta^2}{4}}+\omega_s(-A)}\hskip 10pt\,\,\hbox{if}\hskip
10pt {\frac{\beta^2}{4}}+\omega_s(-A)\ge 0.\]
If $b\not= 0$ and $2a-\beta=0$, this means that (\ref{29/09/17(1)}) has non real roots in this case and $\omega_s({\Lambda}) = a=\beta/2.$
In other words, we have the following relation of spectrum bounds between $\omega_s(-A)$ and $\omega_s({\Lambda})$
\begin{equation}
\label{12/06/2012(1)}
\omega_s({\Lambda}) =
\begin{cases}
\displaystyle{\frac{\beta}{2}}+ \sqrt{{\frac{\beta^2}{4}}+\omega_s(-A)}\hskip 10pt\,\,\hbox{if}\hskip
10pt {\frac{\beta^2}{4}}+\omega_s(-A)\ge 0,\\
\\
\displaystyle{\frac{\beta}{2}}\hskip 100pt
\,\,\,\,\hbox{otherwise}.
\end{cases}
\end{equation}
Define the growth bound $\omega_g({\Lambda})$ of $\Lambda$ by
 \[
\omega_g(\Lambda) = \inf\Big\{\mu\in {\mathbb R}:\,\, \hbox{there exists}\,\, M\ge 1\,\,\hbox{such that}\,\, \|e^{t{\Lambda}}\|\le M e^{\mu t}\,\,\hbox{for all}\,\, t\ge 0\Big\}.\]
From (\ref{12/06/2012(1)}), we have immediately the following conclusions (i) and (ii).

(i) If $\omega_s(-A)\ge 0$, we have $\omega_g({\Lambda})\ge \omega_s({\Lambda})\ge 0$ from (\ref{12/06/2012(1)})  and system  (\ref{12/09/17(02)})  is thus exponentially unstable for any $\beta\in {\mathbb R}$.

(ii) If $\omega_s(-A)<0$ and $\beta\ge  0$, then (\ref{12/06/2012(1)}) implies that $\omega_g({\Lambda})\ge \omega_s({\Lambda})\ge 0$, and system (\ref{12/09/17(02)}) is thus exponentially unstable.

(iii) If $\omega_s(-A)<0$ and $\beta=-\alpha$, $\alpha>0$, $B=-\alpha I$, we  may show that the operator
\begin{equation}
\label{12/06/2012(50)}
{P} =
\begin{pmatrix}
\displaystyle \frac{1}{\alpha}I + \frac{\alpha}{2}A^{-1}& \displaystyle\frac{1}{2}A^{-1}\\
\\
\displaystyle\frac{1}{2}I &\displaystyle\frac{1}{\alpha}I\\
\end{pmatrix}
\end{equation}
is the unique self-adjoint, non-negative solution of  Lyapunov equation
\begin{equation}
\label{12/06/2012(51)}
\left\langle {\Lambda}
\begin{pmatrix}
y_1\\ y_2\\
\end{pmatrix}, {P}\begin{pmatrix}
y_1\\ y_2\\
\end{pmatrix}\right\rangle_{\mathbb H} + \left\langle
{P}\begin{pmatrix}
y_1\\ y_2\\
\end{pmatrix}, {\Lambda}\begin{pmatrix}
y_1\\ y_2\\
\end{pmatrix}\right\rangle_{\mathbb H} =- \left\|\begin{pmatrix}
y_1\\ y_2\\
\end{pmatrix}\right\|_{\mathbb H}^2
\end{equation}
for any $y_1\in {\mathscr D}(A)$, $y_2\in {\mathscr D}(A^{1/2})$.
In this case, the following estimates hold:
\begin{equation}
\label{12/06/2012(5356)}
\gamma_{-}\left\|
\begin{pmatrix}
y_1\\ y_2\\
\end{pmatrix}\right\|_{\mathbb H}^2 \le
\left\langle {P}
\begin{pmatrix}
y_1\\ y_2\\
\end{pmatrix}, \begin{pmatrix}
y_1\\ y_2\\
\end{pmatrix}\right\rangle_{\mathbb H}\le \gamma_{+} \left\|
\begin{pmatrix}
y_1\\ y_2\\
\end{pmatrix}\right\|^2_{\mathbb H},\hskip 20pt\begin{pmatrix}
y_1\\ y_2\\
\end{pmatrix}\in {\mathbb H},
\end{equation}
where
\[
\gamma_{-} = \frac{1}{\alpha}\cdot\frac{\sqrt{1+\theta}}{1+\sqrt{1+\theta}}>0,\hskip 20pt \gamma_+=\frac{1}{\alpha}\Big(1+ \frac{1 +\sqrt{1+\theta}}{\theta}\Big)>0,\hskip 20pt \theta =\frac{4|\omega_s(-A)|}{\alpha^2}.\]
Moreover, we have that
\[
\|e^{t{\Lambda}}\|\le \sqrt{\frac{\gamma_{+}}{\gamma_-}}e^{-\frac{1}{2\gamma_+}t},\hskip 20pt t\ge 0.\]
That is,  system  (\ref{12/09/17(02)}) in this case is exponentially stable.

Indeed,  it is straightforward to get symmetry and nonnegativity of ${P}$ and (\ref{12/06/2012(51)}) is easily verified by a direct calculation.
To show (\ref{12/06/2012(5356)}),  note that for any $\begin{pmatrix}y_1\\ y_2\\
\end{pmatrix}\in {\mathbb H}$,
\begin{equation}
\label{12/06/2012(53556)}
\begin{split}
\left\langle {P}\begin{pmatrix}
y_1\\ y_2\\
\end{pmatrix}, \begin{pmatrix}
y_1\\ y_2\\
\end{pmatrix}\right\rangle_{\mathbb H} = \frac{1}{\alpha}\Big(\|A^{1/2}y_1\|^2_H + \frac{\alpha^2}{2}\|y_1\|^2_H +\alpha\langle y_1, y_2\rangle_H + \|y_2\|^2_H\Big).
\end{split}
\end{equation}
We first find the maximal $\gamma\ge 0$ such that for all $\begin{pmatrix}y_1\\ y_2\\
\end{pmatrix}\in {\mathbb H}$:
\begin{equation}
\label{03/10/2012(2)}
\frac{1}{\alpha}\Big(\|A^{1/2}y_1\|^2_H + \frac{\alpha^2}{2}\|y_1\|^2_H +\alpha\langle y_1, y_2\rangle_H +\|y_2\|^2_H\Big) \ge \gamma (\|A^{1/2}y_1\|^2_H +\|y_2\|^2_H).
\end{equation}
If $\gamma=1/\alpha$,  inequality (\ref{03/10/2012(2)}) becomes
\[
\frac{\alpha^2}{2}\|y_1\|^2_H +\alpha\langle y_1, y_2\rangle_H \ge 0,\]
which clearly does not hold for all  $\begin{pmatrix}y_1\\ y_2\\
\end{pmatrix}\in {\mathbb H}$, e.g., let $y_2=-\alpha y_1$, $y_1\not= 0$. Therefore, $\gamma\in [0, 1/\alpha)$ and  inequality (\ref{03/10/2012(2)}) becomes
\[
(1-\alpha\gamma)\|A^{1/2}y_1\|^2_H +\frac{\alpha^2}{2}\|y_1\|^2_H +\alpha \langle y_1, y_2\rangle_H +(1-\alpha\gamma)\|y_2\|^2_H\ge 0.\]
On the other hand, for fixed $y_1$, we have
\[
\begin{split}
\min_{y_2\in H} \Big\{\alpha\langle y_1, y_2\rangle_H + (1-\alpha\gamma)\|y_2\|^2_H\Big\}&= \min_{y_2\in H} \Big\{ (1-\alpha\gamma)\Big\|y_2+ \frac{\alpha y_1}{2(1-\alpha\gamma)}\Big\|^2_H - \frac{\alpha^2\|y_1\|^2_H}{4(1-\alpha\gamma)} \Big\}\\
&= -\frac{\alpha^2}{4(1-\alpha\gamma)}\|y_1\|^2_H.
\end{split}
\]
Therefore, the required $\gamma\in [0, 1/\alpha)$ should be such that for $y_1\in {\mathscr D}(A^{1/2})$,
\begin{equation}
\label{07/09/15(1)}
(1-\alpha\gamma)\|A^{1/2}y_1\|^2_H \ge \frac{\alpha^2}{2}\Big(\frac{1}{2(1-\alpha\gamma)}-1\Big)\|y_1\|^2_H.
\end{equation}
Since $A$ is self-adjoint, it is well known that
\[
|\omega_s(-A)| = \inf_{y_1\not= 0}\frac{\|A^{1/2}y_1\|^2_H}{\|y_1\|^2_H}.\]
This implies, in addition to (\ref{07/09/15(1)}),  that one is equivalently looking for the maximum value $\gamma\in [0, 1/\alpha)$ such that
\[
|\omega_s(-A)| \ge \frac{\alpha^2}{4}\Big(\frac{1}{(1-\alpha\gamma)^2}-\frac{2}{1-\alpha\gamma}\Big),\]
or,
\[
\theta \ge \frac{1}{(1-\alpha\gamma)^2}-\frac{2}{1-\alpha\gamma}.\]
This easily gives that
\[
\gamma_{-}= \frac{1}{\alpha}\cdot\frac{\sqrt{1+\theta}}{1+\sqrt{1+\theta}}.\]
In a similar way, the expression for $\gamma_+$ can be obtained by looking for a minimum value $\gamma>0$ such that for all $\begin{pmatrix}y_1\\ y_2\\ \end{pmatrix}\in {\mathbb H}$:
\[
\frac{1}{\alpha}\Big(\|A^{1/2}y_1\|^2_H + \frac{\alpha^2}{2}\|y_1\|_H^2 +\alpha\langle y_1, y_2\rangle_H +\|y_1\|^2_H\Big) \le \gamma \Big(\|A^{1/2}y_1\|^2_H +\|y_1\|^2_H\Big).\]

To prove the final part, we consider the mild solution $y$ of the problem (\ref{12/09/17(02)}). Then from  Lyapunov equation (\ref{12/06/2012(51)}), we have
\[
\frac{d}{dt}\langle {P}y(t), y(t)\rangle_{\mathbb H}=-\|y(t)\|^2_{\mathbb H},\hskip 20pt t\ge 0,\]
which, in addition to (\ref{12/06/2012(5356)}), immediately implies that
\[
\frac{d}{dt}\langle{P}y(t), y(t)\rangle_{\mathbb H}= -\|y(t)\|^2_{\mathbb H}\le -\frac{1}{\gamma_+}\langle {P}y(t), y(t)\rangle_{\mathbb H},\hskip 20pt t\ge 0.\]
A simple calculation further yields that
\[
\langle{P}y(t), y(t)\rangle_{\mathbb H}\le e^{-\frac{1}{\gamma_+}t}\langle{P}y(0), y(0)\rangle_{\mathbb H}\le \gamma_+ e^{-\frac{1}{\gamma_+}t}\|y(0)\|^2_{\mathbb H},\hskip 20pt t\ge 0.\]
Hence, we have
\[
\|y(t)\|^2_{\mathbb H}\le \frac{1}{\gamma_-} \langle{P}y(t), y(t)\rangle_{\mathbb H}\le \frac{\gamma_+}{\gamma_-}e^{-\frac{1}{\gamma_+}t}\|y(0)\|^2_{\mathbb H},\hskip 20pt t\ge 0,\]
as desired.

Next, we consider the case that $B$ is a linear unbounded operator. We first establish a useful lemma.

\begin{lemma}
\label{27/09/17(10)}
Assume that there exist constants $\gamma\ge 0$, $\alpha>0$ such that
\begin{equation}
\label{21/09/17(1)}
\gamma Re\langle Bv, v\rangle_{H}\le -|Im\langle Bv, v\rangle_{H}|,\hskip 20pt \forall\, v\in {\mathscr D}(B),
\end{equation}
and
\begin{equation}
\label{21/09/17(2)}
 Re\langle Bv, v\rangle_{H}\le -2\alpha \|v\|^2_{H},\hskip 20pt \forall\, v\in {\mathscr D}(B).
\end{equation}
 Let $0<\delta<\alpha$ and $ \delta-\alpha<a\le 0$. If the inequality
\[
\inf_{y\in {\mathscr D}(\Lambda),\,\|y\|_{\mathbb H}=1}\|(a + ib)y-\Lambda y\|_{\mathbb H}<\delta,\hskip 20pt b\in {\mathbb R}\]
holds, then
\[
|b|<\alpha\cdot\frac{3\delta+(\delta-a)\gamma}{\alpha -\delta+a}.
\]
\end{lemma}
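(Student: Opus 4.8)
The plan is to exploit the infimum hypothesis to manufacture a near-eigenvector of $\Lambda$ at the point $a+ib$, and then to translate the two structural hypotheses (\ref{21/09/17(1)}) and (\ref{21/09/17(2)}) on $B$ into separate quantitative controls on the imaginary and real parts of $\langle By_2,y_2\rangle_H$ evaluated along that vector.

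First I would fix the given $b$ and, since the infimum is strictly below $\delta$, select $y\in\mathscr D(\Lambda)$ with $\|y\|_{\mathbb H}=1$ such that $f:=(a+ib)y-\Lambda y$ obeys $\|f\|_{\mathbb H}<\delta$. Writing $y$ and $f$ in components $(y_1,y_2)$ and $(f_1,f_2)$, the shape of $\Lambda$ gives the two identities $f_1=(a+ib)y_1-y_2$ and $f_2=(a+ib)y_2+Ay_1-By_2$ (with $y_2\in\mathscr D(B)$, so that $\langle By_2,y_2\rangle_H$ is meaningful). Put $p=\|y_2\|_H^2$ and $q=\|y_1\|_V^2=\|A^{1/2}y_1\|_H^2$, so $p+q=1$. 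I would then pair the second identity with $y_2$ in $H$. The coupling term $\langle Ay_1,y_2\rangle_H$ is the only obstacle to a closed scalar relation; I remove it by substituting $y_2=(a+ib)y_1-f_1$ from the first identity and using self-adjointness of $A^{1/2}$, namely $\langle Ay_1,f_1\rangle_H=\langle A^{1/2}y_1,A^{1/2}f_1\rangle_H=\langle y_1,f_1\rangle_V$ (legitimate since $y_1\in\mathscr D(A)$ and $f_1\in V$). This delivers the scalar identity
$$\langle By_2,y_2\rangle_H=(a+ib)p+(a-ib)q-c,\qquad c:=\langle y_1,f_1\rangle_V+\langle f_2,y_2\rangle_H,$$
with $|c|\le\|y\|_{\mathbb H}\|f\|_{\mathbb H}<\delta$ by Cauchy--Schwarz. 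Taking real and imaginary parts and using $p+q=1$,
$$Re\langle By_2,y_2\rangle_H=a-Re\,c,\qquad Im\langle By_2,y_2\rangle_H=b(p-q)-Im\,c.$$

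The step I expect to be the genuine obstacle is to bound $|p-q|$ away from zero, for otherwise the imaginary relation cannot be solved for $b$; this is exactly where strict dissipativity is indispensable. Combining (\ref{21/09/17(2)}) with the real-part identity gives $-2\alpha p\ge Re\langle By_2,y_2\rangle_H=a-Re\,c$, hence $p<(\delta-a)/(2\alpha)$ and therefore $q-p=1-2p>(\alpha-\delta+a)/\alpha>0$, the strict positivity being guaranteed precisely by the assumption $a>\delta-\alpha$; this quantity is the denominator in the assertion. Finally, (\ref{21/09/17(1)}) controls the imaginary part by $|Im\langle By_2,y_2\rangle_H|\le-\gamma\,Re\langle By_2,y_2\rangle_H=\gamma(Re\,c-a)<\gamma(\delta-a)$, so the imaginary-part identity yields $|b|\,(q-p)=|Im\langle By_2,y_2\rangle_H+Im\,c|<\gamma(\delta-a)+\delta$. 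Dividing by the lower bound for $q-p$ produces
$$|b|<\alpha\cdot\frac{(\delta-a)\gamma+\delta}{\alpha-\delta+a}\le\alpha\cdot\frac{3\delta+(\delta-a)\gamma}{\alpha-\delta+a},$$
which is the claim (indeed slightly sharper). Apart from the domain checks justifying the substitution and the self-adjointness identity on $\mathscr D(\Lambda)$, the remainder is routine Cauchy--Schwarz bookkeeping.
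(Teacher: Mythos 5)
Your argument is correct, and it follows the same overall strategy as the paper: pick a unit near-eigenvector at $a+ib$, show via the real part that $Re\langle Bv,v\rangle$ is within $\delta$ of $a$, use strict dissipativity (\ref{21/09/17(2)}) together with $a>\delta-\alpha$ to bound $q-p=\|A^{1/2}y_1\|_H^2-\|y_2\|_H^2$ below by $(\alpha-\delta+a)/\alpha>0$, and then solve the imaginary-part relation for $b$ using (\ref{21/09/17(1)}). The one genuine difference is how the coupling term $\langle Ay_1,y_2\rangle_H$ is handled. The paper controls it by a second application of the residual bound --- pairing the first component of the residual with $A^{1/2}u$ to get $|Im\langle u,Av\rangle_H+b\|A^{1/2}u\|_H^2|<\delta$ --- and then combining three inequalities by the triangle inequality, which is where the $3\delta$ in the numerator comes from. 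You instead eliminate the coupling term exactly, substituting $y_2=(a+ib)y_1-f_1$ and using $\langle Ay_1,f_1\rangle_H=\langle y_1,f_1\rangle_V$ (legitimate, as you note, since $y_1\in{\mathscr D}(A)$ and $f_1=(a+ib)y_1-y_2\in V$), arriving at the closed identity $\langle By_2,y_2\rangle_H=(a+ib)p+(a-ib)q-c$ with $|c|<\delta$. This costs nothing in rigor and buys a sharper numerator, $\delta+(\delta-a)\gamma$ in place of $3\delta+(\delta-a)\gamma$, which would propagate to slightly better constants in Proposition \ref{28/09/17(1)}, Corollary \ref{13/10/17(1)} and Lemma \ref{16/10/17(1)}. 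The domain checks and the Cauchy--Schwarz estimate $|c|\le\|y\|_{\mathbb H}\|f\|_{\mathbb H}$ are all in order.
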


\begin{proof}
Let $y =\displaystyle{u\choose v}\in {\mathscr D}(\Lambda)$ with $\|y\|_{\mathbb H}^2 = \|A^{1/2}u\|^2_H + \|v\|^2_H=1$ and for $a\in (\delta-\alpha, 0]$, $b\in {\mathbb R}$,
\begin{equation}
\label{21/09/17(5)}
\|(a+ib)y -\Lambda y\|_{\mathbb H} = \Big\|{au+ibu -v\choose Au + av +ibv -Bv}\Big\|_{\mathbb H}<\delta.
\end{equation}
From (\ref{21/09/17(5)}), it easily follows that
\[
|\langle (a+ib)y-\Lambda y, y\rangle_{\mathbb H}|\le \|((a+ib)I-\Lambda)y\|_{\mathbb H}\cdot \|y\|_{\mathbb H}<\delta,
\]
which yields
\begin{equation}
\label{21/09/17(61)}
|\langle (a+ib)y, y\rangle_{\mathbb H} + 2iIm\langle u, Av\rangle_{H} -\langle Bv, v\rangle_{H}|<\delta,
\end{equation}
and further, by considering the real parts in (\ref{21/09/17(61)}),
\begin{equation}
\label{21/09/17(30)}
|Re\langle Bv, v\rangle_{H}-a|<\delta.
\end{equation}
Since $\|A^{1/2}u\|^2_{H} + \|v\|^2_{H}=1$ and $a>\delta -\alpha$, we have by virtue of (\ref{21/09/17(2)}) and (\ref{21/09/17(30)}) that
\begin{equation}
\label{21/09/17(64)}
|1 - 2\|A^{1/2}u\|^2_{H}| =|1-2 +2\|v\|^2_{H}| \ge \frac{Re\langle Bv, v\rangle_{H}}{\alpha} +1 > 1- \frac{\delta-a}{\alpha}.
\end{equation}
On the other hand, we obtain from (\ref{21/09/17(5)}) the estimate
\begin{equation}
\label{21/09/17(69)}
|\langle aA^{1/2}u+ibA^{1/2}u -A^{1/2}v, A^{1/2}u\rangle_{H}|\le \|aA^{1/2}u+ibA^{1/2}u -A^{1/2}v\|_{H}\cdot \|A^{1/2}u\|_{H}<\delta,
\end{equation}
hence, by taking imaginary parts,
\begin{equation}
\label{21/09/17(63)}
|Im\langle u, Av\rangle_{H} + b\|A^{1/2}u\|^2_{H}|<\delta.
\end{equation}
Last, by considering the imaginary part of (\ref{21/09/17(61)}), we have
\[
|b + 2Im\langle u, Av\rangle_{H} - Im\langle Bv, v\rangle_{H}|<\delta,\]
which, in addition to (\ref{21/09/17(63)}), yields
\[
\begin{split}
|b|\cdot |1-2\|A^{1/2}u\|^2_{H}|&-|Im\langle Bv, v\rangle_{H}|\\
 &\le |b(1-2\|A^{1/2}u\|^2_{H})-Im\langle Bv, v\rangle_{H}|\\
&\le |2Im\langle u, Av\rangle_{H} + 2b\|A^{1/2}u\|^2_{H}| + |b + 2Im\langle u, Av\rangle_{H} - Im\langle Bv, v\rangle_{H}|\\
&\le 3\delta.
\end{split}
\]
This relation implies, together with (\ref{21/09/17(1)}),  (\ref{21/09/17(30)}) and (\ref{21/09/17(64)}),  that
\[
|b|\Big(1-  \frac{\delta-a}{\alpha}\Big)\le 3\delta -\gamma Re\langle Bv, v\rangle_{H} < 3\delta + (\delta-a)\gamma,\]
and the assertion follows.
\end{proof}

To proceed further, we need the powerful Gearhart-Pr\"uss-Greiner's lemma whose proof is referred to Theorem 1.11 and Exercise 1.13, pp. 302-304, in \cite{kjern00}. For any linear operator $A$, let $R(\lambda, A)$ denote the resolvent operator of $A$, $\lambda\in \rho(A)$.

\begin{lemma} {\bf (Gearhart-Pr\"uss-Greiner)}
  For a strongly continuous semigroup $e^{tA}$, $t\ge 0$, with generator $A$ on a Hilbert space $H$, its growth bound $\omega_g(A)$ is given by
 \[
 \omega_g(A) = \inf\Big\{a>\omega_s(A):\, \sup_{b\in {\mathbb R}}\|R(a+ib, A)\|<\infty\Big\},\]
 where $\omega_s(A)$ is the spectral bound of $A$.
 In particular,  $e^{tA}$, $t\ge 0$, is exponentially stable if and only if the half-plane $\{\lambda=a+ib\in {\mathbb C}: a >0,\, b\in {\mathbb R}\}$ is contained in the resolvent set $\rho(A)$ of $A$ with the resolvent satisfying
\[
\sup_{a>0,\,b\in {\mathbb R}}\|R(a+ib, A)\|<\infty.\]
\end{lemma}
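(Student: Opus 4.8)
The statement bundles a formula for the growth bound together with an exponential-stability criterion; the plan is to treat the stability criterion as the core, prove it by a Fourier/Plancherel argument, and recover the growth-bound formula from it by rescaling. Throughout write $T(t)=e^{tA}$. The easy half rests on the Laplace representation $R(\lambda,A)x=\int_0^\infty e^{-\lambda t}T(t)x\,dt$, valid for $\mathrm{Re}\,\lambda>\omega_g(A)$. If $a>\omega_g(A)$ and $\|T(t)\|\le Me^{\omega t}$ with $\omega_g(A)<\omega<a$, then $\|R(a+ib,A)\|\le M/(a-\omega)$ for every $b\in\mathbb R$; since $a>\omega_g(A)\ge\omega_s(A)$, every such $a$ belongs to the set on the right-hand side of the formula, giving $\inf\{\cdots\}\le\omega_g(A)$. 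Likewise, if $T$ is exponentially stable then the representation converges on the whole half-plane $\mathrm{Re}\,\lambda>0$ and yields $\sup_{\mathrm{Re}\,\lambda>0}\|R(\lambda,A)\|<\infty$.

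The substance is the reverse inequality $\omega_g(A)\le\inf\{\cdots\}$: I must show that $a>\omega_s(A)$ together with $\sup_{b}\|R(a+ib,A)\|<\infty$ forces $\omega_g(A)\le a$. Because $a>\omega_s(A)$ already places the open half-plane $\{\mathrm{Re}\,\lambda>a\}$ inside $\rho(A)$, and after replacing $A$ by $A-aI$ (and $T(t)$ by $e^{-at}T(t)$) I may assume $a=0$, it suffices to prove that $\{\mathrm{Re}\,\lambda>0\}\subset\rho(A)$ with $\sup_{\mathrm{Re}\,\lambda>0}\|R(\lambda,A)\|=:M<\infty$ implies exponential stability. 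For this last implication the plan is to invoke Datko's theorem, which yields exponential stability as soon as $\int_0^\infty\|T(t)x\|^2\,dt<\infty$ for every $x\in H$; thus everything reduces to square-integrability of orbits.

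The $L^2$ bound is produced by Plancherel's theorem, and this is where the Hilbert-space structure is indispensable. For $a>\omega_g(A)$ the function $f_a(t)=e^{-at}T(t)x$ (set to $0$ for $t<0$) lies in $L^2(\mathbb R;H)$ with Fourier transform $\widehat{f_a}(b)=R(a+ib,A)x$, so that
\[
\int_0^\infty e^{-2at}\|T(t)x\|^2\,dt=\frac{1}{2\pi}\int_{\mathbb R}\|R(a+ib,A)x\|^2\,db .
\]
I cannot let $a\downarrow0$ directly, since the integrand on the right is bounded but not integrable in $b$. The device is to fix a reference abscissa $c>\max(\omega_g(A),0)$ and use the resolvent identity $R(a+ib,A)=R(c+ib,A)+(c-a)R(a+ib,A)R(c+ib,A)$ together with $\|R(a+ib,A)\|\le M$ to obtain $\|R(a+ib,A)x\|\le(1+cM)\|R(c+ib,A)x\|$; this bounds $\int_{\mathbb R}\|R(a+ib,A)x\|^2\,db$ uniformly for $0<a\le c$ by $(1+cM)^2$ times its value at $c$, which is finite by Plancherel because $c>\omega_g(A)$. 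Inserting this uniform bound into the identity above and letting $a$ decrease to $\max(\omega_g(A),0)$ by monotone convergence shows that the orbits of the $\max(\omega_g(A),0)$-shifted semigroup are square-integrable; Datko's theorem then forces that shifted growth bound to be strictly negative, i.e.\ $\omega_g(A)<\max(\omega_g(A),0)$, which is consistent only if $\omega_g(A)<0$. Undoing the rescaling gives $\omega_g(A)\le a$ and hence the formula.

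The main obstacle is exactly the failure of $b\mapsto\|R(a+ib,A)x\|^2$ to be integrable, which blocks a naive passage to the boundary line and which the resolvent-identity comparison is designed to repair; the monotone-convergence step together with the shift by $\max(\omega_g(A),0)$ is what converts the uniform-in-$a$ estimate into genuine decay. A secondary delicacy, invisible in the half-plane formulation but present in the stated single-line version, is the upgrade from boundedness of the resolvent on one vertical line to boundedness on the whole half-plane to its right; this cannot be obtained by soft maximum-principle arguments alone and is precisely the sharp Hilbert-space point established in the cited reference. In a fully self-contained treatment I would therefore either assume the half-plane bound outright or run the multiplier form of the Plancherel argument, in which the bounded Fourier multiplier $\widehat{g}\mapsto R(i\,\cdot\,,A)\widehat{g}$ on $L^2(\mathbb R;H)$ encodes the resolvent bound on the single line directly.
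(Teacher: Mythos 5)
The paper does not prove this lemma at all: it is quoted from Engel--Nagel (Theorem V.1.11 and Exercise 1.13 of the cited reference), so there is no in-paper argument to compare against. Your sketch is, in substance, the standard proof from that source: the easy inequality via the Laplace representation of the resolvent; the hard inequality via the vector-valued Plancherel identity $\int_0^\infty e^{-2at}\|T(t)x\|^2\,dt=\frac{1}{2\pi}\int_{\mathbb R}\|R(a+ib,A)x\|^2\,db$, the resolvent identity to transport the $L^2$-bound from a reference line $\mathrm{Re}\,\lambda=c>\omega_g(A)$ down to the critical abscissa, and the Datko--Pazy theorem to convert square-integrable orbits into exponential decay. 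The contradiction set-up (apply the argument to the $\max(\omega_g(A),0)$-shifted semigroup and conclude its growth bound must be negative) is sound, and you correctly isolate where the Hilbert-space structure enters, namely Plancherel for $H$-valued functions and the identification of $R(a+i\cdot)x$ with the Fourier transform of the truncated orbit.

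The one genuine soft spot is the one you flag yourself. After reducing to $a=0$, your resolvent-identity step uses $\|R(a'+ib,A)\|\le M$ for \emph{every} $a'\in(0,c]$, i.e.\ the half-plane bound, whereas the displayed formula for $\omega_g(A)$ hypothesizes only $\sup_b\|R(a+ib,A)\|<\infty$ on the single line $\mathrm{Re}\,\lambda=a$. The Neumann series propagates a single-line bound only to a strip of width $1/M$, not to the whole right half-plane, so the upgrade genuinely requires either the multiplier form of the Plancherel argument or a separate proof that the set appearing in the infimum is an interval. As written, your proposal therefore fully proves the ``in particular'' half-plane criterion and the inequality $\inf\{\cdots\}\le\omega_g(A)$, but leaves the reverse inequality of the literal single-line formula resting on that acknowledged, unclosed step. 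For the way the lemma is used in Section 3 this is immaterial, since there the resolvent bound is verified on every line $a+i{\mathbb R}$ with $a>\max\{\omega_s(\Lambda),-\alpha\}$, which is exactly the half-plane hypothesis your argument needs.
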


\begin{proposition}
\label{28/09/17(1)}
Assume that (\ref{21/09/17(1)}) and (\ref{21/09/17(2)}) hold. Let $\Lambda$ be the operator in (\ref{12/09/17(02)}) which generates a contraction $C_0$-semigroup $e^{t{\Lambda}}$, $t\ge 0$, on ${\mathbb H}$.
Then the growth bound $w_g(\Lambda)$ of $\Lambda$ satisfies
\[
w_g(\Lambda) \le \max\{w_s(\Lambda), -\alpha\}<0.\]
\end{proposition}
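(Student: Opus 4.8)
The plan is to read the growth bound off a uniform resolvent estimate, using the Gearhart-Pr\"uss-Greiner lemma as the engine. Set $M:=\max\{\omega_s(\Lambda),-\alpha\}$. Since $\Lambda$ generates a contraction semigroup we already know $\omega_s(\Lambda)\le\omega_g(\Lambda)\le0$, so $M\le0$; the two things to establish are that $M<0$ and that $\omega_g(\Lambda)\le M$. The whole argument rests on the elementary identity, valid for $\lambda\in\rho(\Lambda)$,
\[
\|R(\lambda,\Lambda)\|=\frac{1}{d(\lambda)},\qquad d(\lambda):=\inf\{\|(\lambda I-\Lambda)y\|_{\mathbb H}:\,y\in{\mathscr D}(\Lambda),\ \|y\|_{\mathbb H}=1\},
\]
which turns every statement about the size of the resolvent into a lower bound for the minimum modulus $d(\lambda)$, precisely the quantity controlled by Lemma \ref{27/09/17(10)}.

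First I would settle the strict inequality $M<0$, i.e. $\omega_s(\Lambda)<0$, since otherwise the range of $a$ used below is empty. For a value $\lambda=a+ib$ in the approximate point spectrum one has $d(\lambda)=0$, so for $-\alpha<a\le0$ Lemma \ref{27/09/17(10)} applies for every admissible $\delta\in(0,a+\alpha)$ and, letting $\delta\downarrow0$, yields the confinement $|b|\le\alpha(-a)\gamma/(\alpha+a)$. The right-hand side pinches down to $0$ as $a\uparrow0$, so a sequence of spectral points with real parts approaching $0$ would have to converge to $0$; but $0\in\rho(\Lambda)$ by the invertibility standing in force from Theorem \ref{05/10/2017(1)} and $\rho(\Lambda)$ is open, a contradiction. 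Hence the approximate point spectrum has real parts bounded strictly below $0$. The residual spectrum is handled identically through the adjoint: $\Lambda^*$ has the same block structure with $B$ replaced by $B^*$, and $B^*$ inherits (\ref{21/09/17(1)}) and (\ref{21/09/17(2)}) because $\mathrm{Re}\langle B^*v,v\rangle=\mathrm{Re}\langle Bv,v\rangle$ and $|\mathrm{Im}\langle B^*v,v\rangle|=|\mathrm{Im}\langle Bv,v\rangle|$, so the same confinement bounds $\bar\lambda\in\sigma_p(\Lambda^*)$. Together these give $\omega_s(\Lambda)<0$ and thus $M<0$.

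Next I would fix any $a\in(M,0]$; then $a>\omega_s(\Lambda)$, so the entire line $\{a+ib:b\in{\mathbb R}\}$ lies in $\rho(\Lambda)$ and the resolvent is defined there. Choosing $\delta\in(0,a+\alpha)$ (which forces $\delta<\alpha$ and $\delta-\alpha<a\le0$, matching the hypotheses of Lemma \ref{27/09/17(10)}), the contrapositive of that lemma says that $d(a+ib)\ge\delta$, hence $\|R(a+ib,\Lambda)\|\le1/\delta$, as soon as $|b|\ge B_0:=\alpha(3\delta+(\delta-a)\gamma)/(\alpha-\delta+a)$. On the complementary compact set $|b|\le B_0$ the map $b\mapsto R(a+ib,\Lambda)$ is norm-continuous and finite, so $d(a+ib)$ is continuous and strictly positive there and is bounded below by some $\delta'>0$. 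Consequently $\inf_{b}d(a+ib)\ge\min\{\delta,\delta'\}>0$, i.e. $\sup_{b}\|R(a+ib,\Lambda)\|<\infty$. As this holds for every $a\in(M,0]$, the Gearhart-Pr\"uss-Greiner lemma gives $\omega_g(\Lambda)\le M$, completing the proof.

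The step I expect to be the genuine obstacle is the strict negativity $\omega_s(\Lambda)<0$: the Gearhart-Pr\"uss-Greiner reduction together with Lemma \ref{27/09/17(10)} only produces the comparison $\omega_g(\Lambda)\le\max\{\omega_s(\Lambda),-\alpha\}$, and one must independently forbid the spectrum from touching or accumulating on the imaginary axis. The approximate point part is exactly what the $\delta\downarrow0$ confinement controls, but the residual part forces one to pass to $\Lambda^*$ and to check carefully that conditions (\ref{21/09/17(1)}) and (\ref{21/09/17(2)}) survive the transition to $B^*$; this transfer, and the reliance on $0\in\rho(\Lambda)$ to anchor the argument near the origin, are the delicate points.
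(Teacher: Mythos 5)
Your overall route coincides with the paper's: Lemma \ref{27/09/17(10)} supplies the resolvent bound for large $|b|$ on each vertical line, continuity of the resolvent handles the complementary compact interval, and the Gearhart--Pr\"uss--Greiner lemma converts the uniform bound into $\omega_g(\Lambda)\le\max\{\omega_s(\Lambda),-\alpha\}$. The quantitative ``pinching'' you obtain by letting $\delta\downarrow 0$ at approximate eigenvalues is a nice sharpening of the paper's treatment of $\sigma_{ap}(\Lambda)$.

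The genuine gap is your residual-spectrum step. You assert that $B^*$ inherits (\ref{21/09/17(1)}) and (\ref{21/09/17(2)}) because $\mathrm{Re}\,\langle B^*v,v\rangle_H=\mathrm{Re}\,\langle Bv,v\rangle_H$; but that identity is only meaningful for $v\in\mathscr{D}(B)\cap\mathscr{D}(B^*)$, whereas you would need the inequalities on all of $\mathscr{D}(B^*)$. For a merely closed, densely defined, dissipative $B$ (not assumed maximal), $B^*$ need not be dissipative at all, let alone satisfy the strict bound (\ref{21/09/17(2)}); and identifying $\Lambda^*$ as the analogous block matrix with the correct domain is itself a nontrivial matter for unbounded operator matrices. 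The paper avoids the adjoint entirely: since the open right half-plane lies in $\rho(\Lambda)$ (contraction semigroup) and $0\in\rho(\Lambda)$ by Theorem \ref{05/10/2017(1)}, any spectral point on the imaginary axis would be a boundary point of $\sigma(\Lambda)$, and $\partial\sigma(\Lambda)\subset\sigma_{ap}(\Lambda)$ (Engel--Nagel, Prop.~1.10); in this situation one in fact has $\omega_s(\Lambda)=\sup\{\mathrm{Re}\,\lambda:\,\lambda\in\sigma_{ap}(\Lambda)\}$, so your pinching argument alone already yields $\omega_s(\Lambda)<0$ once this fact is invoked. The detour through $\Lambda^*$ should simply be deleted and replaced by that observation; with that repair the rest of your argument goes through and matches the paper's proof.
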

\begin{proof}
First note that since $\Lambda$ generates a contraction semigroup, it is immediate that
\[
\{\lambda\in {\mathbb C}:\, Re\,\lambda>0\}\subset \rho(\Lambda),\]
i.e., $w_s(\Lambda)\le 0$.

To show $w_s(\Lambda)<0$, we first verify by contraction that $\sigma(\Lambda)\cap i{\mathbb R}=\emptyset.$ Assume that there exists $b_0\in {\mathbb R}$ such that $ib_0\in \sigma(\Lambda)$. If $b_0=0$, then $0\in \sigma(\Lambda)$, a fact which contradicts the assumption $\Lambda^{-1}\in {\mathscr L}({\mathbb H})$. On the other hand, if $b_0\not= 0$, it is always possible to find a number $\delta>0$ small enough such that $|b_0|\ge \alpha \displaystyle\frac{3\delta +\delta \gamma}{\alpha-\delta}$ in Lemma \ref{27/09/17(10)}, and so
\[
\inf_{{y\in {\mathscr D}(\Lambda)},\,\|y\|_{\mathbb H}=1}\|ib_0 y  - \Lambda y\|_{\mathbb H}\ge \delta,\]
which, by definition, implies that $ib_0\notin \sigma_{ap}(\Lambda)$. Here $\sigma_{ap}(\Lambda)$ is the approximate point spectrum set of $\Lambda$, defined by
\[
\sigma_{ap}(\Lambda) = \{\lambda\in {\mathbb C}: \lambda I-\Lambda\,\,\hbox{is not injective or}\,\, {\mathscr R}(\lambda I - \Lambda)\,\,\hbox{is not closed in}\,\, {\mathbb H}\},
\]
where ${\mathscr R}(\lambda I - \Lambda)$ is the range of operator $\lambda I -\Lambda$.
But this is a contradiction, since $ib_0\in \partial \sigma(\Lambda)$ which is a subset of $\sigma_{ap}(\Lambda)$ (cf. Proposition 1.10, pp. 242-243, in \cite{kjern00}). Hence, $i{\mathbb R}\in \rho(\Lambda)$.
Now, let $a=0$ and $|b|\ge \alpha\cdot \displaystyle\frac{3\delta +\delta\gamma}{\alpha-\delta}$ for $0<\delta<\alpha$ in Lemma \ref{27/09/17(10)}, it follows immediately that
\[
\inf_{{y\in {\mathscr D}(\Lambda)},\,\|y\|_{\mathbb H}=1}\|iby  - \Lambda y\|_{\mathbb H}\ge \delta,\hskip 15pt \hbox{or equivalently,}\hskip 15pt \|R(ib, \Lambda)\|\le \frac{1}{\delta}.\]
 On the other hand, $\|R(\lambda, \Lambda)\|$, $\lambda\in \rho(\Lambda)$, is bounded on any compact set of $i{\mathbb R}$. Hence, $w_s(\Lambda)<0$.

Last, according to Gearhart-Pr\"uss-Greiner's lemma, we want to show
\begin{equation}
\label{27/09/17(60)}
a+i{\mathbb R}\subset \rho(\Lambda)\hskip 15pt \hbox{and}\hskip 15pt \sup_{b\in {\mathbb R}}\|R(a+ib, \Lambda)\|<\infty\hskip 15pt \hbox{for all}\hskip 10pt a>\max\{w_s(\Lambda), -\alpha\}.
\end{equation}
Indeed, if $a>\max\{w_s(\Lambda), -\alpha\}$ and
\[
|b|\ge \alpha\cdot \frac{3\delta +(\delta-a)\gamma}{\alpha-\delta+a}\hskip 15pt \hbox{for}\hskip 10pt 0<\delta<\alpha\]
 in Lemma \ref{27/09/17(10)}, then $a+i{\mathbb R}\subset \rho(\Lambda)$ and
\[
\sup_{\{b:\, |b|\ge \alpha \frac{3\delta +(\delta-a)\gamma}{\alpha-\delta+a}\}}\|R(a+ib, \Lambda)\|\le \frac{1}{\delta}.\]
On the other hand, $\|R(\lambda, \Lambda)\|$, $\lambda\in \rho(\Lambda)$, is bounded on any compact set of ${\mathbb C}$. Hence, the desired (\ref{27/09/17(60)}) holds.  The proof is complete now.
\end{proof}

\begin{corollary}
\label{13/10/17(1)}
Under the same conditions as in Proposition \ref{28/09/17(1)},
\begin{enumerate}
\item[(i)] if $\gamma\not= 0$, then
\[
w_g(\Lambda)\le \nu,\]
where $\nu\in (-\alpha, 0)$ is the unique solution of the equation
\[
\nu^2 + \Big(\frac{\nu\gamma\alpha}{\alpha+\nu}\Big)^2 =\|\Lambda^{-1}\|^{-2};\]
\item[(ii)] if $\gamma=0$, then
\[
w_g(\Lambda) \le \max\{-\alpha, -\|\Lambda^{-1}\|^{-1}\}.\]
\end{enumerate}
\end{corollary}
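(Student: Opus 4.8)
The plan is to apply the Gearhart-Pr\"uss-Greiner lemma: I will show that for every $a>\nu$ the line $a+i\mathbb{R}$ lies in $\rho(\Lambda)$ and $\sup_{b\in\mathbb{R}}\|R(a+ib,\Lambda)\|_{\mathbb H}<\infty$, so that $w_g(\Lambda)\le a$; letting $a\downarrow\nu$ then gives the assertion. The resolvent along $a+i\mathbb{R}$ is controlled in two complementary regimes. For large $|b|$ I reuse the estimate underlying Proposition~\ref{28/09/17(1)}: by Lemma~\ref{27/09/17(10)}, for $0<\delta<\alpha$, $a\in(\delta-\alpha,0]$ and
\[
|b|\ge B_\delta(a):=\alpha\cdot\frac{3\delta+(\delta-a)\gamma}{\alpha-\delta+a}
\]
one has $\|R(a+ib,\Lambda)\|_{\mathbb H}\le 1/\delta$. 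For small $|b|$ I exploit the standing hypothesis $\Lambda^{-1}\in\mathscr{L}(\mathbb H)$: since $R(0,\Lambda)=-\Lambda^{-1}$, the open disc $\{\lambda:|\lambda|<\|\Lambda^{-1}\|^{-1}\}$ is contained in $\rho(\Lambda)$, so every $a+ib$ with $a^2+b^2<\|\Lambda^{-1}\|^{-2}$, that is $|b|<\sqrt{\|\Lambda^{-1}\|^{-2}-a^2}$, lies in $\rho(\Lambda)$.

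These two regimes together cover the whole line as soon as $B_\delta(a)\le\sqrt{\|\Lambda^{-1}\|^{-2}-a^2}$. Since $\delta\in(0,\alpha)$ is at my disposal, I let $\delta\downarrow 0$; as $a\le 0$ the threshold satisfies $B_\delta(a)\to -a\alpha\gamma/(\alpha+a)$, and the covering condition becomes
\[
\Big(\frac{a\alpha\gamma}{\alpha+a}\Big)^2\le\|\Lambda^{-1}\|^{-2}-a^2,\qquad\text{i.e.}\qquad f(a):=a^2+\Big(\frac{a\gamma\alpha}{\alpha+a}\Big)^2\le\|\Lambda^{-1}\|^{-2}.
\]
On $(-\alpha,0)$ the function $f$ is continuous and strictly decreasing, with $f(0^-)=0$ and $f(a)\to+\infty$ as $a\downarrow-\alpha$, so there is a unique $\nu\in(-\alpha,0)$ with $f(\nu)=\|\Lambda^{-1}\|^{-2}$ --- precisely the equation in (i). For any $a>\nu$ we have $f(a)<\|\Lambda^{-1}\|^{-2}$, hence $B_\delta(a)<\sqrt{\|\Lambda^{-1}\|^{-2}-a^2}$ for all sufficiently small $\delta>0$ by continuity; fixing such a $\delta$, the large-$|b|$ regime gives the uniform bound $1/\delta$ outside a compact range of $b$, while on that compact range $R(a+ib,\Lambda)$ is continuous and hence bounded. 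Thus $\sup_{b}\|R(a+ib,\Lambda)\|_{\mathbb H}<\infty$ for every $a>\nu$, and Gearhart-Pr\"uss-Greiner's lemma yields $w_g(\Lambda)\le\nu$.

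The step requiring care --- and the main obstacle --- is to upgrade the large-$|b|$ conclusion of Lemma~\ref{27/09/17(10)}, which only asserts $a+ib\notin\sigma_{ap}(\Lambda)$, into genuine membership $a+ib\in\rho(\Lambda)$, so that the compact portion of the line is truly inside the resolvent set. I would argue by connectedness along the line: the segment $\{a+ib:|b|\le B_\delta(a)\}$ lies strictly inside the disc $|\lambda|<\|\Lambda^{-1}\|^{-1}$ and is therefore in $\rho(\Lambda)$; if some $a+ib_*$ with $|b_*|>B_\delta(a)$ belonged to $\sigma(\Lambda)$, the point of $\sigma(\Lambda)$ on the line nearest to the real axis beyond $B_\delta(a)$ would lie on $\partial\sigma(\Lambda)\subset\sigma_{ap}(\Lambda)$, contradicting Lemma~\ref{27/09/17(10)}; hence $a+i\mathbb{R}\subset\rho(\Lambda)$ (which incidentally also yields $w_s(\Lambda)\le\nu$). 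Finally, for part (ii) with $\gamma=0$ the covering condition degenerates to $a^2\le\|\Lambda^{-1}\|^{-2}$, i.e. $a\ge-\|\Lambda^{-1}\|^{-1}$, whereas Lemma~\ref{27/09/17(10)} still demands $a>-\alpha$; letting $\delta\downarrow 0$ therefore forces only $a>\max\{-\alpha,-\|\Lambda^{-1}\|^{-1}\}$, and the same argument gives $w_g(\Lambda)\le\max\{-\alpha,-\|\Lambda^{-1}\|^{-1}\}$.
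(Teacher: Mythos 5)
Your proof is correct and follows essentially the same route as the paper: Lemma \ref{27/09/17(10)} controls the resolvent for $|b|$ beyond the curve $b=-a\alpha\gamma/(\alpha+a)$, the disk of radius $\|\Lambda^{-1}\|^{-1}$ about the origin handles the remaining compact segment of each vertical line, and intersecting the two regions yields exactly the equation defining $\nu$. You supply details the paper leaves implicit --- the upgrade from ``not in $\sigma_{ap}(\Lambda)$'' to membership in $\rho(\Lambda)$ via the boundary argument, and the strict monotonicity of $f$ giving uniqueness of $\nu$ --- and you treat $\gamma=0$ directly rather than by letting $\gamma\to 0$ in (i), but these are refinements of the same argument.
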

\begin{proof} For $\gamma\not= 0$, if $a\in (-\alpha, 0]$ and $|b|> \displaystyle\frac{-\alpha\gamma a}{a+\alpha}$, then there exists a sufficiently small $\delta>0$ such that $-\alpha +\delta<a\le 0$ and
\[
|b|\ge \alpha\cdot \frac{3\delta +(\delta-a)\gamma}{\alpha-\delta+a}.\]
By virtue of Lemma \ref{27/09/17(10)}, it thus implies that  $a+ib\in \rho(\Lambda)$. That is, the curve
\[
\Big(\nu, \frac{\nu\gamma\alpha}{\alpha+\nu}\Big), \hskip 15pt \nu\in (-\alpha, 0),\]
 is contained in the resolvent set.
On the other hand, it is easy to know that the disk with radius $\|\Lambda^{-1}\|^{-1}$ centered at zero is contained in the resolvent set (cf. Theorem 2.3, p. 274 in \cite{atdl80}). We thus obtain the desired (i) by intersecting the two curves. For (ii), we choose first $\gamma>0$, use (i) and then take the limit as $\gamma\to 0$.
\end{proof}

\begin{corollary}
\label{05/11/17(20)}
Under the same conditions as in Proposition \ref{28/09/17(1)},
\begin{enumerate}
\item[(i)] if $\gamma\not= 0$, then
\[
w_g(\Lambda)\le \nu,\]
where $\nu\in (-\alpha, 0)$ is the unique solution of the equation
\[
\nu^2 + \Big(\frac{\nu\gamma\alpha}{\alpha+\nu}\Big)^2 =(\|A^{-1/2}BA^{-1/2}\| + 2\|A^{-1/2}\|)^{-2};\]
\item[(ii)] if $\gamma=0$, then
\[
w_g(\Lambda) \le \max\{-\alpha, -(\|A^{-1/2}BA^{-1/2}\| + 2\|A^{-1/2}\|)^{-1}\}.\]
\end{enumerate}
\end{corollary}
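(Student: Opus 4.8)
The plan is to recognise that Corollary \ref{05/11/17(20)} is the verbatim analogue of Corollary \ref{13/10/17(1)}, with the quantity $\|\Lambda^{-1}\|$ replaced throughout by $\|A^{-1/2}BA^{-1/2}\| + 2\|A^{-1/2}\|$. In the proof of Corollary \ref{13/10/17(1)} the only role played by $\|\Lambda^{-1}\|^{-1}$ is as the radius of a disk about the origin contained in $\rho(\Lambda)$. Hence it suffices to prove the operator-norm estimate
\[
\|\Lambda^{-1}\| \le \|A^{-1/2}BA^{-1/2}\| + 2\|A^{-1/2}\|
\]
and then to run the identical argument with the possibly smaller radius $(\|A^{-1/2}BA^{-1/2}\| + 2\|A^{-1/2}\|)^{-1} \le \|\Lambda^{-1}\|^{-1}$.

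To obtain this estimate, I would first use that $\Sigma$ is a unitary equivalence between $\Lambda$ and $\Lambda_0$, whence $\Lambda^{-1} = \Sigma^{-1}\Lambda_0^{-1}\Sigma$ and $\|\Lambda^{-1}\| = \|\Lambda_0^{-1}\|$. The matrix computation preceding Theorem \ref{05/10/2017(1)} gives the explicit form
\[
\Lambda_0^{-1} = \begin{pmatrix} A^{-1/2}BA^{-1/2} & -A^{-1/2} \\ A^{-1/2} & 0 \end{pmatrix} \in {\mathscr L}(\tilde{\mathbb H}).
\]
Splitting it into its block-diagonal entry and its two off-diagonal entries,
\[
\Lambda_0^{-1} = \begin{pmatrix} A^{-1/2}BA^{-1/2} & 0 \\ 0 & 0 \end{pmatrix} + \begin{pmatrix} 0 & -A^{-1/2} \\ 0 & 0 \end{pmatrix} + \begin{pmatrix} 0 & 0 \\ A^{-1/2} & 0 \end{pmatrix},
\]
and applying the triangle inequality on $\tilde{\mathbb H} = H \times H$, I would bound the first summand by $\|A^{-1/2}BA^{-1/2}\|$ and each of the two off-diagonal summands by $\|A^{-1/2}\|$, which gives the claimed inequality.

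With the bound established, the remainder mirrors Corollary \ref{13/10/17(1)} exactly: the open disk of radius $(\|A^{-1/2}BA^{-1/2}\| + 2\|A^{-1/2}\|)^{-1}$ about the origin lies in $\rho(\Lambda)$, being a subdisk of the resolvent disk of radius $\|\Lambda^{-1}\|^{-1}$, and intersecting its boundary circle with the curve $\big(\nu, \frac{\nu\gamma\alpha}{\alpha+\nu}\big)$, $\nu \in (-\alpha, 0)$, supplied by Lemma \ref{27/09/17(10)} and Proposition \ref{28/09/17(1)}, yields $\nu$ as the unique solution of the stated equation in case (i); case (ii) then follows by choosing $\gamma > 0$ and letting $\gamma \to 0$. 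There is no substantial obstacle here beyond Corollary \ref{13/10/17(1)}; the one point deserving care is that estimating the two antisymmetric off-diagonal blocks separately, rather than as a single skew block (which would give only $\|A^{-1/2}\|$), is precisely what produces the constant $2\|A^{-1/2}\|$ recorded in the statement.
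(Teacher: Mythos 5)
Your proposal is correct and follows essentially the same route as the paper: the paper likewise reduces the corollary to the single estimate $\|\Lambda^{-1}\|=\|\Lambda_0^{-1}\|\le \|A^{-1/2}BA^{-1/2}\|+2\|A^{-1/2}\|$ and then invokes Corollary \ref{13/10/17(1)}, the only cosmetic difference being that the paper obtains the bound by estimating $\|\Lambda_0^{-1}{u\choose v}\|_{\tilde{\mathbb H}}$ componentwise on a generic vector rather than by your block decomposition and operator-level triangle inequality. Both computations produce the same constant, so no further comment is needed.
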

\begin{proof}
For any $\displaystyle{u\choose v}\in {\mathscr D}(\Lambda_0)$, we have
\[
\begin{split}
\Big\|\Lambda^{-1}_0{u\choose v}\Big\|_{\tilde{\mathbb H}} &= \Big(\|A^{-1/2}BA^{-1/2}u -A^{-1/2}v\|^2_H + \|A^{-1/2}u\|^2_H\Big)^{1/2}\\
&\le \|A^{-1/2}BA^{-1/2}u - A^{-1/2}v\|_H + \|A^{-1/2}u\|_H\\
&\le \|A^{-1/2}BA^{-1/2}\|\|u\|_H + \|A^{-1/2}\|\|v\|_H + \|A^{-1/2}\|\|u\|_H\\
&\le \Big(\big(\|A^{-1/2}BA^{-1/2}\| + \|A^{-1/2}\|\big)^2 + \|A^{-1/2}\|^2\Big)^{1/2}\cdot (\|u\|^2_H + \|v\|^2_H)^{1/2}\\
&\le (\|A^{-1/2}BA^{-1/2}\|+ 2\|A^{-1/2}\|)(\|u\|^2_H + \|v\|^2_H)^{1/2}.
\end{split}
\]
Since ${\mathscr D}(\Lambda_0)$ is dense in $\tilde{\mathbb H}$, we thus have
\[
\|\Lambda^{-1}_0\|_{{\mathscr L}(\tilde{\mathbb H})}\le \|A^{-1/2}BA^{-1/2}\| +2\|A^{-1/2}\|,\]
i.e.,
\[
\|\Lambda^{-1}\|_{{\mathscr L}({\mathbb H})}\le \|A^{-1/2}BA^{-1/2}\| +2\|A^{-1/2}\|.\]
By virtue of Corollary \ref{13/10/17(1)}, we obtain the desired result.
\end{proof}

Next we employ Lemma \ref{27/09/17(10)} to estimate the resolvent $R(ib, \Lambda)$, $b\in {\mathbb R}$, which will play an important role in dealing with stochastic second-order Cauchy problems with delay.

\begin{lemma}
\label{16/10/17(1)}
Under the same conditions as in Theorem \ref{05/10/2017(1)} and Lemma  \ref{27/09/17(10)}, we have for every $0<c<1$,
\[
\|R(ib, \Lambda)\| \le
\begin{cases}
\displaystyle\frac{\|\Lambda^{-1}\|}{1-c}\hskip 20pt &\hbox{for}\hskip 15pt |b|\le \displaystyle\frac{c}{\|\Lambda^{-1}\|},\\
\displaystyle\frac{(3+\gamma)\alpha\|\Lambda^{-1}\| +c}{\alpha c}\hskip 20pt &\hbox{for}\hskip 15pt |b|> \displaystyle\frac{c}{\|\Lambda^{-1}\|},
\end{cases}
\]
\end{lemma}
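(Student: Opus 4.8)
The plan is to partition $\mathbb{R}$ into the two frequency regimes named in the statement and to treat each by a different device: a Neumann-series perturbation of $\Lambda^{-1}$ when $|b|$ is small, and the quantitative lower bound of Lemma \ref{27/09/17(10)} (specialised to $a=0$) when $|b|$ is large. For $|b|\le c/\|\Lambda^{-1}\|$ I would exploit that $0\in\rho(\Lambda)$ with $\Lambda^{-1}\in{\mathscr L}({\mathbb H})$, guaranteed by Theorem \ref{05/10/2017(1)}, and factor
\[
ibI-\Lambda=-\Lambda\bigl(I-ib\Lambda^{-1}\bigr).
\]
Whenever $|b|\,\|\Lambda^{-1}\|<1$ the factor $I-ib\Lambda^{-1}$ is boundedly invertible via its Neumann series, with $\|(I-ib\Lambda^{-1})^{-1}\|\le(1-|b|\,\|\Lambda^{-1}\|)^{-1}$, so that $R(ib,\Lambda)=-(I-ib\Lambda^{-1})^{-1}\Lambda^{-1}$ and
\[
\|R(ib,\Lambda)\|\le\frac{\|\Lambda^{-1}\|}{1-|b|\,\|\Lambda^{-1}\|}.
\]
Since $|b|\le c/\|\Lambda^{-1}\|$ forces $|b|\,\|\Lambda^{-1}\|\le c<1$, the right-hand side is bounded by $\|\Lambda^{-1}\|/(1-c)$, which is the first branch.

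For $|b|>c/\|\Lambda^{-1}\|$ I would read Lemma \ref{27/09/17(10)} with $a=0$ in contrapositive form: for any $0<\delta<\alpha$, if $|b|\ge\alpha(3+\gamma)\delta/(\alpha-\delta)$ then $\inf_{\|y\|_{\mathbb H}=1}\|iby-\Lambda y\|_{\mathbb H}\ge\delta$. The crux is to pick $\delta$ so that this threshold equals the case boundary $c/\|\Lambda^{-1}\|$; solving the resulting linear equation gives
\[
\delta=\frac{c\alpha}{(3+\gamma)\alpha\|\Lambda^{-1}\|+c},
\]
and a short check confirms $\delta\in(0,\alpha)$, positivity being immediate and $\delta<\alpha$ reducing to $0<(3+\gamma)\alpha\|\Lambda^{-1}\|$. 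With this choice every $b$ satisfying $|b|>c/\|\Lambda^{-1}\|$ meets the hypothesis of the contrapositive, so $\inf_{\|y\|_{\mathbb H}=1}\|iby-\Lambda y\|_{\mathbb H}\ge\delta$.

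To turn this lower bound into the advertised resolvent estimate I would invoke $i\mathbb{R}\subset\rho(\Lambda)$, established in Proposition \ref{28/09/17(1)} under the present hypotheses; then $ib-\Lambda$ is invertible and, writing $y=R(ib,\Lambda)z$, the inequality $\|z\|_{\mathbb H}\ge\delta\|y\|_{\mathbb H}$ yields $\|R(ib,\Lambda)\|\le1/\delta=((3+\gamma)\alpha\|\Lambda^{-1}\|+c)/(\alpha c)$, the second branch. I expect the only genuine difficulty to lie in the second regime: identifying the single value of $\delta$ that simultaneously matches the case boundary and reproduces the stated constant, and verifying $\delta\in(0,\alpha)$ so that Lemma \ref{27/09/17(10)} is legitimately in force. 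The first regime is a routine Neumann estimate, and the passage from the lower bound to the resolvent norm is standard once invertibility is known.
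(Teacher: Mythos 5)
Your proposal is correct and follows essentially the same route as the paper: a Neumann-series bound around $0\in\rho(\Lambda)$ for the low-frequency regime, and the contrapositive of Lemma \ref{27/09/17(10)} with $a=0$, calibrated by solving $\alpha\delta(3+\gamma)/(\alpha-\delta)=c/\|\Lambda^{-1}\|$, for the high-frequency regime. Your added checks (that $\delta\in(0,\alpha)$ and that $i{\mathbb R}\subset\rho(\Lambda)$ justifies converting the approximate lower bound into a resolvent norm bound) are details the paper leaves implicit, and they are correctly handled.
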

\begin{proof}
Since $\Lambda$ is invertible, the resolvent of $\Lambda$ is given by
\[
R(\lambda, \Lambda) = \sum^\infty_{n=1} \lambda^n \Lambda^{-(n+1)}\hskip 20pt \hbox{for}\hskip 15pt |\lambda|< \frac{1}{\|\Lambda^{-1}\|}.\]
Moreover, for $0<c<1$, we have that
\[
\|R(\lambda, \Lambda)\| \le \|\Lambda^{-1}\|\sum^\infty_{n=0} |\lambda|^n\|\Lambda^{-1}\|^n =\frac{\|\Lambda^{-1}\|}{1-c}\hskip 15pt \hbox{for any}\hskip 15pt |\lambda|\le  \frac{c}{\|\Lambda^{-1}\|}.
\]
For $|\lambda|> c/\|\Lambda^{-1}\|$, let $0<\delta<\alpha$ and we have by Lemma \ref{27/09/17(10)} that
\[
\|R(ib, \Lambda)\|<\frac{1}{\delta}\hskip 15pt \hbox{for}\hskip 15pt |b|> \displaystyle\frac{\alpha(3\delta+\gamma)}{\alpha-\delta}.\]
Now we have to look for one $\delta\in (0, \alpha)$ such that
\[
\frac{\alpha(3\delta+\gamma)}{\alpha-\delta} =  \frac{c}{\|\Lambda^{-1}\|}.\]
But this is possible when
\[
\delta = \frac{\alpha c}{\alpha(3+\gamma)\|\Lambda^{-1}\|+c}\]
which concludes the proof.
\end{proof}

\begin{corollary}
\label{06/11/17(2)}
Under the same conditions as in Lemma \ref{16/10/17(1)}, we have
\[
\|R(ib, \Lambda)\|\le \frac{2\alpha(3+\gamma)\kappa^{-1} +1}{\alpha}\]
for all $b\in {\mathbb R}$ and every lower bound $\kappa>0$ of $\Lambda$.
\end{corollary}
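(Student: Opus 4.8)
The plan is to read the desired uniform estimate straight out of Lemma~\ref{16/10/17(1)} by fixing the free parameter $c\in(0,1)$ at the single value $c=1/2$, and then to weaken the resulting bound, which is phrased in terms of $\|\Lambda^{-1}\|$, into one valid for an arbitrary lower bound $\kappa$.

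First I would clarify the role of $\kappa$. Recall that $\kappa>0$ is a lower bound of the (invertible) operator $\Lambda$ exactly when $\|\Lambda y\|_{\mathbb H}\ge \kappa\|y\|_{\mathbb H}$ for every $y\in{\mathscr D}(\Lambda)$; writing $y=\Lambda^{-1}z$ this is equivalent to $\|\Lambda^{-1}z\|_{\mathbb H}\le \kappa^{-1}\|z\|_{\mathbb H}$, i.e.\ to $\|\Lambda^{-1}\|\le \kappa^{-1}$. Thus every admissible $\kappa$ satisfies $\kappa^{-1}\ge\|\Lambda^{-1}\|$, and since the claimed right-hand side $\tfrac{2\alpha(3+\gamma)\kappa^{-1}+1}{\alpha}$ increases with $\kappa^{-1}$, it suffices to prove the extremal case
\[
\|R(ib,\Lambda)\|\le \frac{2\alpha(3+\gamma)\|\Lambda^{-1}\|+1}{\alpha}\qquad\text{for all }b\in{\mathbb R};
\]
the statement for a general lower bound then follows a fortiori.

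To establish this display I would invoke Lemma~\ref{16/10/17(1)} with $c=1/2$. On the low-frequency range $|b|\le \tfrac{1}{2\|\Lambda^{-1}\|}$ the first branch yields $\|R(ib,\Lambda)\|\le 2\|\Lambda^{-1}\|$, while on the complementary range $|b|>\tfrac{1}{2\|\Lambda^{-1}\|}$ the second branch yields $\tfrac{(3+\gamma)\alpha\|\Lambda^{-1}\|+1/2}{\alpha/2}=\tfrac{2\alpha(3+\gamma)\|\Lambda^{-1}\|+1}{\alpha}$, which is precisely the target constant. It then remains only to note that the low-frequency bound does not exceed the target: since $\gamma\ge 0$ one has $2\alpha(3+\gamma)\|\Lambda^{-1}\|\ge 6\alpha\|\Lambda^{-1}\|>2\alpha\|\Lambda^{-1}\|$, whence $2\|\Lambda^{-1}\|<\tfrac{2\alpha(3+\gamma)\|\Lambda^{-1}\|+1}{\alpha}$. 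Taking the maximum over the two ranges gives the displayed estimate for every $b\in{\mathbb R}$, and combining with the reduction noted above completes the argument.

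I do not expect a genuine obstacle here, as the corollary is essentially a repackaging of Lemma~\ref{16/10/17(1)}; the only point requiring care is the order reversal in passing from $\|\Lambda^{-1}\|$ to $\kappa$, namely that a smaller lower bound $\kappa$ produces a larger, hence weaker, right-hand side, so that settling the single extremal value $\kappa=\|\Lambda^{-1}\|^{-1}$ automatically covers all admissible $\kappa$. The specific choice $c=1/2$ is what forces the high-frequency branch to coincide with the stated constant; other choices of $c$ would still deliver a finite $b$-uniform bound but not the exact form in the statement.
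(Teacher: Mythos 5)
Your proposal is correct and follows essentially the same route as the paper: choose $c=1/2$ in Lemma \ref{16/10/17(1)} and use $\|\Lambda^{-1}\|\le\kappa^{-1}$ to pass from the extremal bound to an arbitrary lower bound. The paper's own proof is just the two-line version of this; your explicit check that the low-frequency branch $2\|\Lambda^{-1}\|$ is dominated by the high-frequency constant is a detail the paper leaves implicit but is correct and worth having.
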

\begin{proof}
Since $\|\Lambda^{-1}\|\le \kappa^{-1}$, we choose $c=1/2$ in Lemma \ref{16/10/17(1)} and obtain
\[
\|R(ib, \Lambda)\|\le \frac{2\alpha(3+\gamma)\|\Lambda^{-1}\| + 1}{\alpha}\le \frac{2\alpha(3+\gamma)\kappa^{-1} + 1}{\alpha}\]
as desired.
\end{proof}

\section{\large Asymptotic Behavior of Solutions with Delay}

Now let $r>0$ and consider a linear Cauchy problem with memory in the Hilbert space $H$,
\begin{equation}
\label{12/09/17(134896789)}
\begin{cases}
d\Big(\displaystyle\frac{du(t)}{dt}\Big)  + Au(t)dt = Bu'(t)dt+ Mu_tdt + Nu'_tdt,\hskip 15pt t\ge 0,\\
u(0)= \phi_{0, 1}\in V,\,\,\,u'(0)=\phi_{0, 2}\in H,\\
u_0= \phi_{1,1}\in L^2([-r, 0], V),\,\,\, u'_0=\phi_{1, 2}\in L^2([-r, 0], H),
\end{cases}
\end{equation}
where $A$, $B$ and $V$ are given as in Section 2 and the mappings $M: W^{1, 2}([-r, 0], V)\to H$, $N: W^{1, 2}([-r, 0], H)\to H$ are two bounded linear operators. Here $W^{1, 2}([-r, 0], V)$ and $W^{1, 2}([-r, 0], H)$ are the standard Sobolev spaces of functions from $[-r, 0]$ into $V$ and $H$, respectively. We strengthen the condition on $M$, $N$ by assuming further that $M$, $N$ are given by the Riemann-Stieljes integrals of functions of bounded variation $\eta:\, [-r, 0]\to {\mathscr L}(V, H)$ and $\eta:\, [-r, 0]\to {\mathscr L}(H)$, i.e.,
\[
M(\varphi) = \int^0_{-r} d\eta(\theta)\varphi(\theta)\hskip 15pt \forall\,\varphi\in W^{1, 2}([-r, 0], V),\]
and
\[
N(\varphi) = \int^0_{-r} d\zeta(\theta)\varphi(\theta)\hskip 15pt \forall\,\varphi\in W^{1, 2}([-r, 0], H).\]
For each $\lambda\in {\mathbb C}$, we define in connection with $M$ a linear operator $M(e^{\lambda\cdot}): V\to V$ by
\[
M(e^{\lambda\cdot})x = M(e^{\lambda\cdot}x),\hskip 20pt x\in V.\]
Then it is easy to see that $M(e^{\lambda\cdot})\in {\mathscr L}(V)$ and
$$\|M(e^{\lambda\cdot})\|_{{\mathscr L}(V)}\le e^{|\lambda|r}Var(\eta)^0_{-r}$$
where $Var(\eta)^0_{-r}$ is the total variation of $\eta$ on $[-r, 0]$. In a similar way, one can define and show $N(e^{\lambda\cdot})\in {\mathscr L}(H)$ and
\[
\|N(e^{\lambda\cdot})\|_{{\mathscr L}(H)}\le e^{|\lambda|r}Var(\zeta)^0_{-r}.\]
By defining $y(t) = \displaystyle{u(t)\choose u'(t)}$, $y_t = \displaystyle{u_t\choose u'_t}$, let us rewrite the problem (\ref{12/09/17(134896789)}) as a first-order  delay differential equation in ${\mathbb H}$,
\begin{equation}
\label{24/05/17(1678)}
\begin{cases}
dy(t) = \Lambda y(t)dt + Fy_tdt,\,\,\,\,t\ge 0,\\
y(0)= \phi_0 = \displaystyle{\phi_{0, 1}\choose \phi_{0, 2}}\in {\mathbb H},\,\,\,y_0 = \phi_1 =\displaystyle{\phi_{1, 1}\choose \phi_{1, 2}}\in L^2([-r, 0], {\mathbb H}),
\end{cases}
\end{equation}
where $\Lambda$ is given as in (\ref{12/09/17(02)}) and delay operator matrix $F = \displaystyle\left(\begin{array}{cc}
0& 0\\
M& N
\end{array}\right)$ is a bounded linear operator from $W^{1, 2}([-r, 0], V)\times W^{1, 2}([-r, 0, H)$ into ${\mathbb H}$ given by
\[
F\left(\begin{array}{c}
\varphi_1\\
\varphi_2
\end{array}\right) = \left(\begin{array}{cc}
0& 0\\
M& N
\end{array}\right)\left(\begin{array}{c}
\varphi_1\\
\varphi_2
\end{array}\right) = \left(\begin{array}{c}
0\\
\displaystyle\int^0_{-r} d\eta(\theta)\varphi_1(\theta) + \int^0_{-r} d\zeta(\theta)\varphi_2(\theta)
\end{array}\right)
\]
for any $\varphi_1\in W^{1, 2}([-r, 0], V)$, $\varphi_2\in W^{1, 2}([-r, 0], H)$.

Let ${\cal H}_2$, or simply ${\cal H}$, denote the Hilbert space ${\mathbb H}\times L^2([-r, 0], {\mathbb H})$, equipped with the usual product space inner product
\[
\langle \varphi, \psi\rangle_{{\cal H}_2} := \langle \varphi_{0}, \psi_{0}\rangle_{\mathbb H} + \int^0_{-r}\langle \varphi_{1}(\theta), \psi_{1}(\theta)\rangle_{\mathbb H}d\theta,\]
for any $\varphi=(\varphi_{0}, \varphi_{1})$, $\psi=(\psi_{0}, \psi_{1})\in {\cal H}_2$.
It may be shown  that for any $\phi=(\phi_0, \phi_1)\in {\cal H}_2$, the equation  (\ref{24/05/17(1678)}) has a unique mild solution $y(t, \phi)$.  For any $x\in {\mathbb H}$, we introduce the  {\it fundamental solution\/} or {\it  Green operator\/} $G(t): (-\infty, \infty)\to {\mathscr L}({\mathbb H})$ of  ({\ref{24/05/17(1678)}) by
\begin{equation}
\label{25/05/06(1)}
G(t)x = \begin{cases}
y(t, \phi),\hskip 15pt &t\ge 0,\\
0,\hskip 15pt &t< 0,
\end{cases}
\end{equation}
where $\phi= (x, 0)$, $x\in {\mathbb H}$.  It turns out
(cf. \cite{kliu08(0)})
 that $G(t)$, $t\ge 0$, is a strongly continuous one-parameter family of bounded linear operators on ${\mathbb H}$.
For each function $\varphi:\, [-r, 0]\to {\mathbb H}$, we define its right extension function  $\vec{\varphi}$  by
\begin{equation}
\label{12/01/08(1)}
\vec{\varphi}: \,\, [-r, \infty)\to {\mathbb H},\,\,\,\, \vec{\varphi}(t) =
\begin{cases}
\varphi(t),&\hskip 15pt -r\le t\le 0,\\
0,&\hskip 15pt 0<t<\infty.
\end{cases}
\end{equation}
By virtue of (\ref{12/01/08(1)}), it may be shown (cf. \cite{kliu08(0)}) that the mild solution $y(t, \phi)$ of (\ref{24/05/17(1678)}) is represented explicitly by the variation of constants formula
\begin{equation}
\label{25/05/06(4)}
y(t) = G(t)\phi_0 + \int^t_{0} G(t-s)F(\vec{\phi_1})_sds,\hskip 15pt t\ge 0,
\end{equation}
and $y(t) =\phi_1(t)$, $t\in [-r, 0)$. It is useful to introduce the so-called structure operator $S$ defined on the space $L^2([-r, 0]; {\mathbb H})$ by
\begin{equation}
\label{28/06/06(101)}
\begin{split}
(S\varphi)(\theta) =F\vec{\varphi}_{-\theta},\hskip 10pt \theta\in [-r, 0],\hskip 15pt \forall\, \varphi(\cdot)\in W^{1,2}([-r, 0]; {\mathbb H}).
\end{split}
\end{equation}
It is not difficult to show that $S$ can be extended to a  linear and bounded operator from $L^2_r := L^2([-r, 0]; {\mathbb H})$ into itself.  Moreover,  the variation of constants formula for the mild solution of (\ref{24/05/17(1678)}) may be rewritten as
\begin{equation}
\label{29/01/08(1)}
\begin{cases}
y(t) = G(t)\phi_0 + \displaystyle\int^{0}_{-r} G(t+\theta)(S\phi_1)(\theta)d\theta,\hskip 15pt t\ge 0,\\
y(0)=\phi_0,\,\, y(\theta) =\phi_1(\theta),\,\,\,\,\,\theta\in [-r, 0].
\end{cases}
\end{equation}

The mild solution $y(t, \phi)$ of (\ref{24/05/17(1678)}) allows us to introduce a $C_0$-semigroup on ${\cal H}_2$. Indeed, we define a mapping ${\cal S}(t)$, $t\ge 0$, associated with $y(t, \phi)$,  by
\begin{equation}
\label{09/08/07(2)}
{\cal S}(t)\phi =(y(t, \phi), y_t(\cdot, \phi)),\,\,\,\,\,\,t\ge 0,\,\,\,\,\phi\in {\cal H}.
\end{equation}
 It can be shown (cf. \cite{sn1988})
that the mapping ${\cal S}(t)$, $t\ge 0$, is a $C_0$-semigroup with some infinitesimal generator ${\cal A}$ on the space ${\cal H}_2$. Moreover, the operator ${\cal A}$ may be explicitly specified as follows.
\begin{proposition}
\label{09/08/07(10)}
 The  generator ${\cal A}$ of the $C_0$-semigroup ${\cal S}(t)$, or denote it by $e^{t{\cal A}}$, is described by
\begin{equation}
\label{09/08/07(4)}
{\mathscr D}({\cal A})=\Big\{\phi=(\phi_0, \phi_1)\in {\cal H}_2:\, \phi_1\in W^{1, 2}([-r, 0]; {\mathbb H}),\,\phi_1(0)=\phi_0\in {\mathscr D}(\Lambda)\Big\},
\end{equation}
\begin{equation}
\label{09/08/07(5)}
{\cal A}\phi=\Big(\Lambda\phi_0 + F\phi_1, \frac{d\phi}{d\theta}(\theta)\Big)\,\,\,\,\,\hbox{for any}\,\,\,\,\phi=(\phi_0, \phi_1)\in {\mathscr D}({\cal A}).
\end{equation}
\end{proposition}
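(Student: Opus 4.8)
The plan is to introduce the candidate operator $\mathcal{B}$ defined by the right-hand sides of (\ref{09/08/07(4)}) and (\ref{09/08/07(5)}) and to prove $\mathcal{A}=\mathcal{B}$ in two stages: first the inclusion $\mathcal{B}\subseteq\mathcal{A}$ by computing the generator directly from the difference quotient of $\mathcal{S}(t)$, and then the reverse inclusion through a resolvent/surjectivity argument that exploits the already-known fact (from \cite{sn1988}) that $\mathcal{A}$ generates $\mathcal{S}(t)$.

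For the inclusion $\mathcal{B}\subseteq\mathcal{A}$, I would fix $\phi=(\phi_0,\phi_1)$ with $\phi_1\in W^{1,2}([-r,0];\mathbb{H})$, $\phi_1(0)=\phi_0\in\mathscr{D}(\Lambda)$, and evaluate $\lim_{t\downarrow 0}t^{-1}(\mathcal{S}(t)\phi-\phi)$ componentwise. The first ($\mathbb{H}$-valued) component is handled via the mild-solution representation $y(t)=e^{t\Lambda}\phi_0+\int_0^t e^{(t-s)\Lambda}Fy_s\,ds$: the term $t^{-1}(e^{t\Lambda}\phi_0-\phi_0)$ tends to $\Lambda\phi_0$ because $\phi_0\in\mathscr{D}(\Lambda)$ and $\Lambda$ generates $e^{t\Lambda}$ (Theorem \ref{05/10/2017(1)}), while $t^{-1}\int_0^t e^{(t-s)\Lambda}Fy_s\,ds\to F\phi_1$; here I would invoke the Riemann--Stieltjes form of $F$ together with continuity of the extended trajectory to pass $Fy_s\to Fy_0=F\phi_1$ by dominated convergence. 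The second ($L^2$-valued) component is the difference quotient of the shift, $t^{-1}(y_t(\theta)-\phi_1(\theta))=t^{-1}(\bar y(t+\theta)-\bar y(\theta))$, where $\bar y$ equals $\phi_1$ on $[-r,0]$ and the mild solution on $[0,\infty)$. The matching condition $\phi_1(0)=\phi_0$ makes $\bar y$ continuous across $0$, hence $\bar y\in W^{1,2}$ on a neighbourhood of $[-r,0]$, so the standard fact that shift difference quotients of $W^{1,2}$ functions converge in $L^2$ to the derivative yields the limit $d\phi_1/d\theta$. Together these give $\mathcal{A}\phi=(\Lambda\phi_0+F\phi_1,\,d\phi_1/d\theta)=\mathcal{B}\phi$.

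To upgrade this to equality I would argue through the resolvent. Since $\mathcal{A}$ generates a $C_0$-semigroup, $\rho(\mathcal{A})$ contains a right half-plane, so I may fix a real $\lambda$ large enough that $\lambda\in\rho(\mathcal{A})$. I then show $\lambda-\mathcal{B}$ is surjective by solving $(\lambda-\mathcal{B})(\phi_0,\phi_1)=(\psi_0,\psi_1)$ explicitly: the second component is the linear ODE $\phi_1'(\theta)=\lambda\phi_1(\theta)-\psi_1(\theta)$ subject to $\phi_1(0)=\phi_0$, whose unique $W^{1,2}$ solution is $\phi_1(\theta)=e^{\lambda\theta}\phi_0+\int_\theta^0 e^{\lambda(\theta-s)}\psi_1(s)\,ds$; substituting into the first component $(\lambda-\Lambda)\phi_0-F\phi_1=\psi_0$ reduces the problem to inverting the characteristic operator $\Delta(\lambda):=\lambda-\Lambda-F(e^{\lambda\cdot})$ on $\mathscr{D}(\Lambda)$. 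Writing $\Delta(\lambda)=(\lambda-\Lambda)[I-(\lambda-\Lambda)^{-1}F(e^{\lambda\cdot})]$ and using $\|(\lambda-\Lambda)^{-1}\|\le 1/\lambda$ (contraction semigroup) together with the bound $\|F(e^{\lambda\cdot})\|\le\sqrt 2\,\max\{Var(\eta)^0_{-r},Var(\zeta)^0_{-r}\}$ valid for $\mathrm{Re}\,\lambda\ge 0$ (since $|e^{\lambda\theta}|\le 1$ on $[-r,0]$), the perturbation has norm $<1$ for $\lambda$ large, so $\Delta(\lambda)$ is boundedly invertible and $\lambda-\mathcal{B}$ is onto. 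Finally, $\mathcal{B}\subseteq\mathcal{A}$, injectivity of $\lambda-\mathcal{A}$, and surjectivity of $\lambda-\mathcal{B}$ force $\mathscr{D}(\mathcal{A})\subseteq\mathscr{D}(\mathcal{B})$, hence $\mathcal{A}=\mathcal{B}$.

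The main obstacle I anticipate is the second-component ($L^2$) limit in the inclusion step: one must verify, uniformly as the boundary layer $[-t,0]$ shrinks, that the difference quotient of the glued trajectory $\bar y$ really converges in $L^2$ to $d\phi_1/d\theta$, and this is exactly where the regularity $\phi_1\in W^{1,2}$ and the compatibility $\phi_1(0)=\phi_0$ are indispensable --- without them the glued trajectory fails to lie in $W^{1,2}$ near $0$ and the limit does not exist, which is also how the \emph{necessity} of the domain conditions is read off. A secondary technical point is confirming $Fy_s\to F\phi_1$; I would resolve it using the bounded-variation representation of $M$ and $N$ and dominated convergence rather than continuity in the $W^{1,2}$ topology, which is more delicate.
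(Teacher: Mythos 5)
The paper itself offers no proof of this proposition: it simply asserts that ${\cal S}(t)$ is a $C_0$-semigroup and that its generator ``may be explicitly specified'' as stated, deferring entirely to Nakagiri \cite{sn1988} (see also \cite{absp05}). Your two-stage argument --- computing the difference quotient of ${\cal S}(t)$ on the candidate domain to get ${\cal B}\subseteq{\cal A}$, then closing the gap by showing $\lambda-{\cal B}$ is surjective for one $\lambda\in\rho({\cal A})$ via the characteristic operator $\Delta(\lambda)=\lambda-\Lambda-F(e^{\lambda\cdot})$ and a Neumann series --- is exactly the standard proof in those references, and it is sound. Two small points to tighten. First, in the boundary-layer estimate for the second component you do not actually need the glued trajectory to lie in $W^{1,2}$ across $0$ (which would require differentiability of the mild solution on $[0,t]$ and risks circularity); it suffices to split $\int_{-t}^{0}t^{-2}\|y(t+\theta)-\phi_1(\theta)\|^2\,d\theta$ using $y(t+\theta)-\phi_1(\theta)=(y(t+\theta)-\phi_0)+(\phi_1(0)-\phi_1(\theta))$, where the first term is $O(t+\theta)$ because $\phi_0\in{\mathscr D}(\Lambda)$ gives the Lipschitz bound $\|e^{s\Lambda}\phi_0-\phi_0\|\le s\|\Lambda\phi_0\|$ and the convolution term is $O(s)$, and the second is controlled by $\|\phi_1'\|_{L^2([-t,0])}\to 0$. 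Second, in the factorization of $\Delta(\lambda)$ it is slightly cleaner to write $\Delta(\lambda)=[I-F(e^{\lambda\cdot})(\lambda-\Lambda)^{-1}](\lambda-\Lambda)$, so that the Neumann inverse acts on ${\mathbb H}$ and bijectivity of $\Delta(\lambda):{\mathscr D}(\Lambda)\to{\mathbb H}$ is immediate; with the other ordering one must additionally check that the perturbation preserves ${\mathscr D}(\Lambda)$. Neither point is a gap in substance.
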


The proof of the following proposition is referred to Proposition 2.2 in \cite{kliu11}.

\begin{proposition}
\label{25/09/08(10)}
For  the Green operator $G(t)$, $t\in {\mathbb R}$, and $C_0$-semigroup $e^{t{\cal A}}$, $t\ge 0$, in (\ref{09/08/07(2)}),
 the following relations are equivalent:
\begin{enumerate}
\item[(i)] the semigroup $e^{t{\cal A}}$ is exponentially stable, i.e., there exist constants $M>0$ and $\mu>0$ such that $\|e^{t{\cal A}}\|\le Me^{-\mu t}$, $t\ge 0$.
\item[(ii)] the Green operator $G(t)$ is exponentially stable, i.e., there exist constants $M>0$ and $\mu>0$ such that $\|G(t)\|\le Me^{-\mu t}$, $t\ge 0$.
\end{enumerate}
\end{proposition}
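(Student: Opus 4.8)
The plan is to prove the two implications separately: the direction (i) $\Rightarrow$ (ii) is essentially immediate, whereas (ii) $\Rightarrow$ (i) is the substantial part and relies on the variation of constants formula (\ref{29/01/08(1)}) together with the boundedness of the structure operator $S$ on $L^2_r$. For (i) $\Rightarrow$ (ii) I would fix $x\in {\mathbb H}$ and take the special initial datum $\phi = (x, 0)\in {\cal H}_2$, for which $\|\phi\|_{{\cal H}_2} = \|x\|_{\mathbb H}$. By the definition (\ref{25/05/06(1)}) of the Green operator and (\ref{09/08/07(2)}) of the semigroup, $G(t)x = y(t, \phi)$ is exactly the first coordinate of ${\cal S}(t)\phi = e^{t{\cal A}}\phi$, whence
\[
\|G(t)x\|_{\mathbb H} = \|y(t, \phi)\|_{\mathbb H} \le \|e^{t{\cal A}}\phi\|_{{\cal H}_2} \le Me^{-\mu t}\|x\|_{\mathbb H},
\]
so that $\|G(t)\| \le Me^{-\mu t}$ and (ii) follows.

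For the converse (ii) $\Rightarrow$ (i), I would begin from (\ref{29/01/08(1)}),
\[
y(t, \phi) = G(t)\phi_0 + \int^0_{-r} G(t+\theta)(S\phi_1)(\theta)\,d\theta, \hskip 15pt t\ge 0,
\]
and assume $\|G(t)\|\le Me^{-\mu t}$. The leading term is bounded directly by $Me^{-\mu t}\|\phi_0\|_{\mathbb H}$. For the convolution term, once $t\ge r$ one has $t+\theta\ge 0$ for every $\theta\in [-r, 0]$, so $\|G(t+\theta)\|\le Me^{-\mu(t+\theta)}\le Me^{\mu r}e^{-\mu t}$; combining this with the Cauchy--Schwarz inequality and the bound $\|S\phi_1\|_{L^2_r}\le \|S\|\,\|\phi_1\|_{L^2_r}$ yields
\[
\int^0_{-r} \|G(t+\theta)\|\,\|(S\phi_1)(\theta)\|_{\mathbb H}\,d\theta \le Me^{\mu r}\sqrt{r}\,\|S\|\,e^{-\mu t}\|\phi_1\|_{L^2_r}.
\]
Hence $\|y(t, \phi)\|_{\mathbb H}\le Ce^{-\mu t}\|\phi\|_{{\cal H}_2}$ for $t\ge r$ and some constant $C>0$; since $y(\cdot, \phi)$ is bounded on the compact interval $[0, r]$ by strong continuity and linearity, one may enlarge $C$ so that this pointwise estimate holds for all $t\ge 0$.

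To pass from decay of $y(t, \phi)$ to decay of the whole semigroup, I would then control the history component by writing, for $t\ge r$,
\[
\|e^{t{\cal A}}\phi\|^2_{{\cal H}_2} = \|y(t, \phi)\|^2_{\mathbb H} + \int^t_{t-r}\|y(s, \phi)\|^2_{\mathbb H}\,ds,
\]
where the identity for the integral follows from $y_t(\theta) = y(t+\theta)$ after the substitution $s = t+\theta$, and $[t-r, t]\subset [0, \infty)$. Inserting the pointwise bound just obtained into both terms dominates the integral by $C^2 r\, e^{-2\mu(t-r)}\|\phi\|^2_{{\cal H}_2}$, so that $\|e^{t{\cal A}}\phi\|_{{\cal H}_2}\le \tilde{M}e^{-\mu t}\|\phi\|_{{\cal H}_2}$ for $t\ge r$ and hence, after adjusting the constant to absorb the bounded behaviour on $[0, r]$, for all $t\ge 0$. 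This establishes (i).

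The step I expect to be the main obstacle is the control of the convolution term, and in particular making rigorous the treatment of the transient regime $0\le t<r$, where $G(t+\theta)$ is evaluated either at negative arguments (where it vanishes by (\ref{25/05/06(1)})) or in the interval $[0, r)$, on which only a uniform, non-decaying bound for $G$ is available. The resolution is that these contributions live on a fixed finite time window and can therefore be absorbed into the multiplicative constant without affecting the exponential rate $\mu$. A subsidiary point requiring care is that $S\phi_1\in L^2_r$ for arbitrary $\phi_1\in L^2_r$, which is precisely the content of the extension of the structure operator (\ref{28/06/06(101)}) to a bounded operator on $L^2_r$.
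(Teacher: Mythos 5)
Your proof is correct. The paper itself does not prove this proposition but defers to Proposition 2.2 of the cited reference \cite{kliu11}; your argument --- the trivial direction via the initial datum $\phi=(x,0)$, and the converse via the representation (\ref{29/01/08(1)}) together with the boundedness of the structure operator $S$ on $L^2_r$ and the identity $\int^0_{-r}\|y_t(\theta)\|^2_{\mathbb H}\,d\theta=\int^t_{t-r}\|y(s)\|^2_{\mathbb H}\,ds$ --- is exactly the standard route taken there, and every estimate checks out. One minor simplification: since $G$ vanishes on $(-\infty,0)$ and $e^{-\mu(t+\theta)}\le e^{\mu r}e^{-\mu t}$ whenever $\theta\in[-r,0]$ and $t+\theta\ge 0$, your bound on the convolution term already holds for \emph{all} $t\ge 0$, so the separate transient analysis on $[0,r)$ is only genuinely needed for the history component, where it reduces to absorbing $\|\phi_1\|_{L^2_r}$ into the constant.
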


Now we consider the asymptotic behavior of $C_0$-semigroup $e^{t{\cal A}}$, $t\ge 0$, on ${\cal H}$. Recall that for any $\lambda\in {\mathbb C}$, $F(e^{\lambda\cdot})y := F(e^{\lambda\cdot}y)$, $y\in {\mathbb H}$, defines a bounded linear operator $F(e^{\lambda\cdot})$ on ${\mathbb H}$. We first state a proposition whose proof is referred to Theorem 5.5, pp. 104-105, in \cite{absp05}.

\begin{proposition}
Assume that $\Lambda$ generates an exponentially stable $C_0$-semigroup $e^{t{\Lambda}}$, $t\ge 0$, on ${\mathbb H}$, i.e., $\omega_g(\Lambda)<0$, and let $a\in (\omega_g(\Lambda), 0]$ such that
\[
\alpha_{a, n} := \sup_{b\in {\mathbb R}}\|[F(e^{(a+ib)\cdot})R(a+ib, \Lambda)]^n\|<\infty\hskip 20pt \hbox{for any}\hskip 16pt n\in {\mathbb N}.\]
Further, if the series $\sum^\infty_{n=0}\alpha_{a, n}<\infty$, then $\omega_g({\cal A})<a\le 0$. That is, ${\cal A}$ generates in this case an exponentially stable $C_0$-semigroup $e^{t{\cal A}}$, $t\ge 0$, on ${\cal H}$.
\end{proposition}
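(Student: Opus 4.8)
The plan is to apply the Gearhart--Pr\"uss--Greiner lemma directly to the generator ${\cal A}$ on the Hilbert space ${\cal H}$. Since that lemma characterizes the growth bound through uniform resolvent estimates on vertical lines, it suffices to prove that $a+i{\mathbb R}\subset\rho({\cal A})$ and that $\sup_{b\in{\mathbb R}}\|R(a+ib,{\cal A})\|<\infty$ for the given value of $a$; this will yield $\omega_g({\cal A})<a\le 0$, i.e. the asserted exponential stability of $e^{t{\cal A}}$.

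First I would compute the resolvent of ${\cal A}$ explicitly. For $\lambda=a+ib$ and $\psi=(\psi_0,\psi_1)\in{\cal H}$, the equation $(\lambda I-{\cal A})\phi=\psi$ with $\phi=(\phi_0,\phi_1)\in{\mathscr D}({\cal A})$ splits, by (\ref{09/08/07(5)}), into the component equation $\lambda\phi_0-\Lambda\phi_0-F\phi_1=\psi_0$ and the transport equation $\lambda\phi_1-d\phi_1/d\theta=\psi_1$ on $[-r,0]$ subject to the compatibility condition $\phi_1(0)=\phi_0$. Integrating the transport equation gives
\[
\phi_1(\theta)=e^{\lambda\theta}\phi_0+\int_\theta^0 e^{\lambda(\theta-s)}\psi_1(s)\,ds,\qquad\theta\in[-r,0],
\]
and substituting this into the first equation, using $F(e^{\lambda\cdot}\phi_0)=F(e^{\lambda\cdot})\phi_0$, produces the characteristic equation $\Delta(\lambda)\phi_0=\psi_0+F(\int_\cdot^0 e^{\lambda(\cdot-s)}\psi_1(s)\,ds)$, where $\Delta(\lambda):=\lambda I-\Lambda-F(e^{\lambda\cdot})$. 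Hence $\lambda\in\rho({\cal A})$ precisely when $\Delta(\lambda)$ is boundedly invertible.

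Next I would factor $\Delta(\lambda)=[I-F(e^{\lambda\cdot})R(\lambda,\Lambda)](\lambda I-\Lambda)$, which is legitimate because $a>\omega_g(\Lambda)\ge\omega_s(\Lambda)$ forces $\lambda=a+ib\in\rho(\Lambda)$; moreover the Gearhart--Pr\"uss--Greiner lemma applied to $\Lambda$ itself (whose hypotheses hold since $\omega_g(\Lambda)<0$) gives $\sup_b\|R(a+ib,\Lambda)\|<\infty$. Invertibility of $\Delta(\lambda)$ thus reduces to that of $I-F(e^{\lambda\cdot})R(\lambda,\Lambda)$, and here the standing hypotheses enter: the bounds $\alpha_{a,n}<\infty$ together with $\sum_{n=0}^\infty\alpha_{a,n}<\infty$ guarantee that the Neumann series $\sum_{n=0}^\infty[F(e^{\lambda\cdot})R(\lambda,\Lambda)]^n$ converges in operator norm uniformly in $b$ to $[I-F(e^{\lambda\cdot})R(\lambda,\Lambda)]^{-1}$, with $\sup_b\|[I-F(e^{\lambda\cdot})R(\lambda,\Lambda)]^{-1}\|\le\sum_{n}\alpha_{a,n}$. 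Consequently $\Delta(a+ib)^{-1}=R(a+ib,\Lambda)[I-F(e^{(a+ib)\cdot})R(a+ib,\Lambda)]^{-1}$ exists and is bounded uniformly in $b$, so $a+i{\mathbb R}\subset\rho({\cal A})$.

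The final and most delicate step is to turn these pieces into a genuine uniform bound on $\|R(a+ib,{\cal A})\|$ as an operator on the product space ${\cal H}$. From the explicit formulas above I would estimate $\|\phi_0\|_{\mathbb H}$ via $\|\Delta(\lambda)^{-1}\|$ and the norm of the inhomogeneous term $F(\int_\cdot^0 e^{\lambda(\cdot-s)}\psi_1\,ds)$, and then estimate $\|\phi_1\|_{L^2([-r,0];{\mathbb H})}$ from its integral representation. The point that makes everything uniform in $b$ is that only the modulus $|e^{\lambda\theta}|=e^{a\theta}$ enters these $L^2$-estimates, and on the compact interval $[-r,0]$ this is dominated by $\max\{1,e^{-ar}\}$ independently of $b$; in particular $\|F(e^{(a+ib)\cdot})\|$ stays bounded in $b$ because the Riemann--Stieltjes representation of $F$ sees only the values $e^{a\theta}$ and not the $\theta$-derivative of $e^{\lambda\theta}$. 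I expect this assembling of the componentwise estimates into a single bound $\sup_b\|R(a+ib,{\cal A})\|<\infty$, while carefully tracking the $b$-uniformity, to be the main obstacle, though it is essentially bookkeeping once the Neumann-series inverse and the resolvent bound for $\Lambda$ are in hand. With that bound established, the Gearhart--Pr\"uss--Greiner lemma immediately delivers $\omega_g({\cal A})<a\le 0$, as claimed.
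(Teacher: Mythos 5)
The paper gives no proof of this proposition---it simply refers the reader to Theorem 5.5 of B\'atkai--Piazzera \cite{absp05}---and your argument is precisely the standard one used there: reduce $\lambda\in\rho({\cal A})$ to invertibility of the characteristic operator $\Delta(\lambda)=\lambda I-\Lambda-F(e^{\lambda\cdot})$, invert $I-F(e^{\lambda\cdot})R(\lambda,\Lambda)$ by a Neumann series whose norm is controlled uniformly in $b$ by $\sum_n\alpha_{a,n}$, and conclude via Gearhart--Pr\"uss--Greiner. Your outline is correct; the only point worth making explicit is that a uniform resolvent bound on the single line $\mathrm{Re}\,\lambda=a$ propagates (by the standard Neumann-series perturbation of the resolvent) to a uniform bound on a slightly wider half-plane, which is what converts $\omega_g({\cal A})\le a$ into the strict inequality $\omega_g({\cal A})<a$.
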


\begin{corollary}
\label{05/11/17(50)}
Assume that the growth bound of $\Lambda$ satisfies $\omega_g(\Lambda)<0$, and let $a\in (\omega_g(\Lambda), 0]$. If
  \[
  \sup_{b\in {\mathbb R}}\|F(e^{({a+ib})\cdot})\|< \frac{1}{\sup_{b\in {\mathbb R}}\|R(a+ib, \Lambda)\|},\]
  then $\omega_g({\cal A})<a\le 0$.
\end{corollary}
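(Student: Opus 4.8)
The plan is to deduce everything from the Proposition immediately preceding this corollary: I would verify its two standing hypotheses, namely the finiteness of each $\alpha_{a,n}$ and the summability of $\sum_{n=0}^\infty \alpha_{a,n}$, by means of a single submultiplicativity estimate that collapses both conditions into a convergent geometric series. The whole argument is essentially bookkeeping once the right two scalars are isolated.

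First I would record two quantities associated with the fixed $a\in(\omega_g(\Lambda),0]$. Since $a>\omega_g(\Lambda)$ strictly, the resolvent admits the Laplace representation
\[
R(a+ib,\Lambda)=\int_0^\infty e^{-(a+ib)t}e^{t\Lambda}\,dt ,
\]
and choosing any $\omega\in(\omega_g(\Lambda),a)$ together with the exponential bound $\|e^{t\Lambda}\|\le Me^{\omega t}$ gives
\[
\|R(a+ib,\Lambda)\|\le M\int_0^\infty e^{-(a-\omega)t}\,dt=\frac{M}{a-\omega}<\infty ,
\]
a bound independent of $b$. Hence $\rho_a:=\sup_{b\in\mathbb{R}}\|R(a+ib,\Lambda)\|<\infty$, while $\varphi_a:=\sup_{b\in\mathbb{R}}\|F(e^{(a+ib)\cdot})\|$ is finite by the very formulation of the hypothesis. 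The assumption of the corollary is then exactly the statement that $q:=\varphi_a\,\rho_a<1$.

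Next comes the core estimate. By submultiplicativity of the operator norm,
\[
\big\|[F(e^{(a+ib)\cdot})R(a+ib,\Lambda)]^n\big\|\le \|F(e^{(a+ib)\cdot})\|^n\,\|R(a+ib,\Lambda)\|^n ,
\]
so, taking the supremum over $b$ and using $\sup_b f(b)g(b)\le(\sup_b f)(\sup_b g)$ for nonnegative $f,g$ followed by monotonicity of the $n$-th power, I obtain $\alpha_{a,n}\le(\varphi_a\rho_a)^n=q^n<\infty$ for every $n\in\mathbb{N}$; this disposes of the first hypothesis immediately. Because $q<1$, the same bound yields $\sum_{n=0}^\infty\alpha_{a,n}\le\sum_{n=0}^\infty q^n=\tfrac{1}{1-q}<\infty$, which is the second hypothesis. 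The preceding Proposition now applies verbatim and produces $\omega_g(\mathcal{A})<a\le 0$, as claimed. I do not expect any genuine obstacle: the only step demanding care is the uniform boundedness $\rho_a<\infty$ of the resolvent along the vertical line $\mathrm{Re}\,\lambda=a$, which is precisely what working strictly to the right of $\omega_g(\Lambda)$ guarantees, and everything else reduces to the sup-of-a-product inequality and the summation of a geometric series.
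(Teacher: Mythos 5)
Your proposal is correct and follows essentially the same route as the paper: the paper's proof sets $q_a=\sup_{b}\|F(e^{(a+ib)\cdot})R(a+ib,\Lambda)\|<1$, bounds $\alpha_{a,n}\le q_a^n$, and sums the geometric series, which is exactly your argument up to the harmless replacement of the sup of the product by the product of the sups. Your extra verification that $\sup_b\|R(a+ib,\Lambda)\|<\infty$ for $a>\omega_g(\Lambda)$ is a standard fact the paper leaves implicit, so there is no substantive difference.
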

\begin{proof}
Defining $q_a = \sup_{b\in{\mathbb R}}\|F(e^{(a+ib)\cdot})R(a+ib, \Lambda)\|<1$, we obtain that $\alpha_{a, n}\le q^n_a$, hence the series $\sum^\infty_{n=0}\alpha_{a, n}$ is convergent.
\end{proof}

\section{\large Stochastic Systems Driven by Wiener Processes}

Let $\{\Omega, {\mathscr F}, {\mathbb P}\}$ be a
 probability space equipped with some filtration $\{\mathscr{F}_{t}\}_{t\geq 0}$. Let $K$ be a separable Hilbert space and  $\{W_Q(t),\,t\ge 0\}$ denote a $Q$-Wiener process with respect to $\{\mathscr{F}_{t}\}_{t\geq 0}$ in $K$, defined on  $\{\Omega, {\mathscr F}, {\mathbb P}\}$, with covariance operator $Q$, i.e.,
\[
\mathbb{E}\langle W_Q(t), x\rangle_K\langle W_Q(s), y\rangle_K = (t\wedge s)\langle Qx, y\rangle_K\,\,\,\,\hbox{for all}\,\,\,\,\, x,\,\,y\in K,\]
where $Q$ is a positive, self-adjoint and trace class operator on $K$. We frequently call $W_Q(t)$, $t\ge 0$, a $K$-valued $Q$-Wiener process with respect to $\{\mathscr{F}_{t}\}_{t\geq 0}$ if the trace Tr$\,Q<\infty$.
We introduce a subspace $K_Q={\mathscr R}(Q^{1/2})\subset K$, the range of $Q^{1/2}$, which is a Hilbert space endowed with the inner product
\[
\langle u, v\rangle_{K_Q} =\langle Q^{-1/2}u, Q^{-1/2}v\rangle_K\hskip 10pt\hbox{for any}\hskip 10pt  u,\,\,v\in K_Q.\]
Let $X$ be a separable Hilbert space and ${\mathscr L}_2(K_Q, X)$ denote the space of all Hilbert-Schmidt operators from $K_Q$ into $X$. Then ${\mathscr L}_2(K_Q, X)$ turns out to be a separable Hilbert space, equipped with the norm
\[
\|\Psi\|^2_{{\mathscr L}_2(K_Q, X)} =Tr [\Psi Q^{1/2}(\Psi Q^{1/2})^*]\hskip 15pt \hbox{for any}\,\,\,\,\Psi\in {\mathscr L}_2(K_Q, X).\]
For arbitrarily given $T\ge 0$, let
$J(t,\omega)$, $t\in[0,T]$, be an ${\mathscr L}_2(K_Q, X)$-valued process, and we define the following norm for arbitrary $t\in[0,T]$,
\begin{equation}
\label{11/02/09(10)}
|J|_t :=\biggl\{\mathbb{E}\int^t_0 Tr\Big[J(s,\omega)Q^{1/2}(J(s,\omega)Q^{1/2})^*\Big]ds\biggr\}^{\frac{1}{2}}.
\end{equation}
In particular, we denote all ${\mathscr L}_2(K_Q, X)$-valued measurable processes $J$, adapted to the filtration $\{{\mathscr F}_t\}_{t\le T}$, satisfying $|J|_T <
\infty$ by ${\cal U}^2\big([0,T]; \,{\mathscr L}_2(K_Q, X)\big)$.

Let $r>0$ and consider a semi-linear Cauchy problem with memory in the Hilbert space $H$,
\begin{equation}
\label{12/09/17(1348967)}
\begin{cases}
d\Big(\displaystyle\frac{du(t)}{dt}\Big)  + Au(t)dt = Bu'(t)dt+ Mu_tdt + Nu'_tdt  + R(u(t), u'(t), u_t, u'_t)dW(t),\hskip 15pt t\ge 0,\\
u(0)= \phi_{0, 1}\in V,\,\,\,u'(0)=\phi_{0, 2}\in H,\\
u_0= \phi_{1,1}\in L^2([-r, 0], V),\,\,\, u'_0=\phi_{1, 2}\in L^2([-r, 0], H),
\end{cases}
\end{equation}
where $A$, $B$, $M$, $N$ and $V$ are given as in Section 4 and $W$ is a $Q$-Wiener process in $K$. The non-linear mapping
\[
R: V\times H\times L^2([-r, 0], V)\times L^2([-r, 0], H)\to {\mathscr L}_2(K_Q, H)\]
 is assumed to be Borel measurable and there exist constants $\alpha_1,\,\alpha_2>0$ and a finite measure $\kappa(\cdot)$ on $[-r, 0]$
 such that
\begin{equation}
\label{08/10/17(2)}
\begin{split}
\|R(\phi)-R(\psi)&\|_{{\mathscr L}_2(K_Q, H)}^2\le \alpha_1\Big(\|\phi_{0, 1}-\psi_{0, 1}\|^2_V + \|\phi_{0, 2}-\psi_{0, 2}\|^2_H\Big)\\
&\,\, + \alpha_2\Big(\int^0_{-r} \|\phi_{1, 1}(\theta) - \psi_{1,1}(\theta)\|^2_V \kappa(d\theta) + \int^0_{-r} \|\phi_{1, 2}(\theta) - \psi_{1,2}(\theta)\|^2_H \kappa(d\theta)\Big)
\end{split}
\end{equation}
for any $\phi = (\phi_0, \phi_1) = (\phi_{0, 1}, \phi_{0, 2}, \phi_{1,1}, \phi_{1, 2}),\, \psi =(\psi_0, \psi_1) =  (\psi_{0, 1}, \psi_{0, 2}, \psi_{1,1}, \psi_{1, 2})\in V\times H\times L^2([-r, 0], V)\times L^2([-r, 0], H)$.

Our strategy here is to apply the techniques developed in the previous sections. Hence, as the first step, let us rewrite this problem into a first-order stochastic delay system.
That is, by defining $y(t) = \displaystyle{u(t)\choose u'(t)}$,  $y_t = \displaystyle{u_t\choose u'_t}$ and a nonlinear mapping $L$ by
\[
L(\phi_0, \phi_1) = {0\choose R(\phi_{0, 1}, \phi_{0, 2}, \phi_{1,1}, \phi_{1,2})}\]
for any $\phi = (\phi_0, \phi_1) = (\phi_{0, 1}, \phi_{0, 2}, \phi_{1,1}, \phi_{1, 2})\in V\times H\times L^2([-r, 0], V)\times L^2([-r, 0], H)$,
 we can transform (\ref{12/09/17(1348967)}) into a first-order stochastic system
\begin{equation}
\label{24/05/17(1)}
\begin{cases}
dy(t) = \Lambda y(t)dt + Fy_tdt + L(y(t), y_t)dW(t),\,\,\,\,t\ge 0,\\
y(0)= \phi_0 = \displaystyle{\phi_{0, 1}\choose \phi_{0, 2}}\in {\mathbb H},\,\,\,y_0 =\phi_1 = {\phi_{1, 1}\choose  \phi_{1, 2}}\in L^2([-r, 0], {\mathbb H}),
\end{cases}
\end{equation}
where $L: {\cal H}\to {\mathscr L}_2(K_Q, {\mathbb H})$ is clearly Borel measurable and  (\ref{08/10/17(2)}) implies immediately that
 \begin{equation}
\label{24/05/17(2)}
\begin{split}
\|L(\phi)-L(\psi)\|^2_{{\mathscr L}_2(K_Q, {\mathbb H})}\le \alpha_1\|\phi_0-\psi_0\|^2_{\mathbb H} + &\,\alpha_2\int^0_{-r} \|\phi_1(\theta)-\psi_1(\theta)\|^2_{L^2([-r, 0], {\mathbb H})}\kappa(d\theta),\\
&\forall\,\phi=(\phi_0, \phi_1),\,\psi=(\psi_0, \psi_1)\in {\cal H}.
\end{split}
\end{equation}
Let ${C}_b({\cal H})$ denote the set of all bounded and continuous real-valued functions on ${\cal H}$ and ${\mathscr P}({\cal H})$ be the space of all probability measures on $({\cal H}, {\mathscr B}({\cal H}))$ where ${\mathscr B}({\cal H})$ is the Borel $\sigma$-algebra on ${\cal H}$.
For any mild solution $y$ of (\ref{24/05/17(1)}), it is well-known that
  $Y(t) := (y(t), y_t)$, $t\ge 0$, is a Markov process in ${\cal H}$.

\begin{definition}\rm
A {\it stationary distribution} for $Y(t) = (y(t), y_t)$, $t\ge 0$, of equation (\ref{24/05/17(1)}) is defined as a probability measure $\mu\in {\mathscr P}({\cal H})$ satisfying
\[
\mu(f) = \mu({\mathbb P}_tf),\,\,\,\,\,t\ge 0,\]
where
\[
\mu(f) := \int_{\cal H}f(\phi)\mu(d\phi)\hskip 15pt \hbox{and}\hskip 15pt {\mathbb P}_tf(\phi) := {\mathbb E}f(Y(t, \phi)),\hskip 15pt f\in {C}_b({\cal H}).\]
\end{definition}

 For ${\mu}_1$, ${\mu}_2\in\mathscr{P}({\cal H})$, define a metric on $\mathscr{P}({\cal H})$ by
\begin{equation}
\label{07/06/17(1)}
d({\mu}_1,{\mu}_2)=\sup_{f\in {\cal M}}\Big|\int_{\cal H}f(\phi){\mu}_1(d\phi)-\int_{\cal H}f(\psi){\mu}_2(d\psi)\Big|,
\end{equation}
where
\[
{\cal M} :=\{f: {\cal H}\rightarrow \mathbb{R},\, |f(\phi)-f(\psi)|\le \|\phi-\psi\|_{\cal H}\,\,\hbox{for any}\,\, \phi,\,\psi\in {\cal H} \;\; \textrm{and} \;\;|f(\cdot)|\le 1\}.\]
Then it is well known (cf. \cite{rmd2003} or \cite{partha67}) that $\mathscr{P}({\cal H})$  is complete under the  metric $d(\cdot, \cdot)$.

\begin{lemma}
\label{25/10/17(2)}
Suppose that for any bounded subset $U$ of ${\cal H}$,

(i) $\lim_{t\to\infty} \sup_{\phi,\,\psi\in U}{\mathbb E}\|Y(t, \phi)-Y(t, \psi)\|^2_{\cal H}=0;$

(ii) $\sup_{t\ge 0}\sup_{\phi\in U}{\mathbb E}\|Y(t, \phi)\|^2_{\cal H}<\infty.$

\noindent
 Then, for initial data $\phi\in {\cal H}$,  process $Y(t, \phi)$, $t\ge 0$, has a stationary distribution.
\end{lemma}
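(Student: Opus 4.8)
The plan is to fix an arbitrary initial datum $\phi\in{\cal H}$, write $\mu_t\in{\mathscr P}({\cal H})$ for the law of $Y(t,\phi)$, and to show that the family $\{\mu_t\}_{t\ge 0}$ is Cauchy with respect to the metric $d$ defined in (\ref{07/06/17(1)}). Since $({\mathscr P}({\cal H}), d)$ is complete, this produces a limit $\mu\in{\mathscr P}({\cal H})$, and the remaining task is to verify that $\mu$ is invariant under the transition semigroup ${\mathbb P}_t$.

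The first ingredient I would record is a contractivity estimate for ${\mathbb P}_t$ on the test class ${\cal M}$. For $f\in{\cal M}$ and $\zeta,\,\xi\in{\cal H}$, using that $f$ is $1$-Lipschitz together with the Cauchy--Schwarz inequality,
\[
|{\mathbb P}_tf(\zeta)-{\mathbb P}_tf(\xi)| \le {\mathbb E}\|Y(t,\zeta)-Y(t,\xi)\|_{\cal H} \le \big({\mathbb E}\|Y(t,\zeta)-Y(t,\xi)\|^2_{\cal H}\big)^{1/2},
\]
so that by hypothesis (i) the right-hand side tends to $0$ as $t\to\infty$, uniformly for $\zeta,\xi$ in any fixed bounded set and uniformly in $f\in{\cal M}$.

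To establish the Cauchy property I would use the semigroup identity ${\mathbb P}_{t+s}f={\mathbb P}_s({\mathbb P}_tf)$ to write, for $f\in{\cal M}$ and $s\ge 0$,
\[
\mu_{t+s}(f)-\mu_t(f) = {\mathbb E}\big[({\mathbb P}_tf)(Y(s,\phi)) - ({\mathbb P}_tf)(\phi)\big].
\]
Splitting the expectation according to whether $Y(s,\phi)$ lies in the ball $B_N=\{\zeta:\|\zeta\|_{\cal H}\le N\}$ or not, the contribution on $B_N$ is controlled by $\epsilon_N(t):=\sup_{\zeta,\xi\in B_N\cup\{\phi\}}({\mathbb E}\|Y(t,\zeta)-Y(t,\xi)\|^2_{\cal H})^{1/2}$, which tends to $0$ as $t\to\infty$ for each fixed $N$ by (i); the contribution off $B_N$ is at most $2\,{\mathbb P}(\|Y(s,\phi)\|_{\cal H}>N)$ because $|{\mathbb P}_tf|\le 1$, and by Chebyshev's inequality together with (ii) this is bounded by $2C_\phi/N^2$, where $C_\phi:=\sup_{s\ge 0}{\mathbb E}\|Y(s,\phi)\|^2_{\cal H}<\infty$. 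Taking the supremum over $f\in{\cal M}$ gives
\[
d(\mu_{t+s},\mu_t) \le \epsilon_N(t) + \frac{2C_\phi}{N^2}\qquad\text{for all }s\ge 0,
\]
so choosing first $N$ large and then $t$ large shows that $\{\mu_t\}$ is Cauchy.

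Finally, by completeness $\mu_t\to\mu$ in $d$. Passing to the limit $t\to\infty$ in the Markov identity $\mu_{t+s}(f)=\mu_t({\mathbb P}_sf)$ yields $\mu(f)=\mu({\mathbb P}_sf)$ for every $f\in{\cal M}$ and $s\ge 0$; since ${\cal M}$ is measure-determining this forces ${\mathbb P}_s^{*}\mu=\mu$, hence $\mu(f)=\mu({\mathbb P}_sf)$ for all $f\in C_b({\cal H})$, which is exactly stationarity. The step needing care is the convergence of the right-hand side, for which ${\mathbb P}_sf$ must be bounded and Lipschitz; boundedness is clear, and the Lipschitz property follows from the continuous dependence of mild solutions on their initial data guaranteed by the Lipschitz condition (\ref{24/05/17(2)}), via the same estimate as above. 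I expect the main obstacle to be the Cauchy estimate, where the randomness of the intermediate state $Y(s,\phi)$ has to be tamed by combining the uniform $L^2$-bound (ii) through a truncation together with the asymptotic contraction (i); since (i) applies only to deterministic initial points, the truncation against the tail of $Y(s,\phi)$ is the essential mechanism.
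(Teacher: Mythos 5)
Your proof is correct and follows essentially the same route as the paper: establish the Cauchy property of the laws via the Markov property, a truncation onto a large ball, Chebyshev's inequality from the uniform second-moment bound, and the uniform $L^2$-contraction on bounded sets, then pass to the limit in the semigroup identity to get invariance. (Incidentally, your attribution of the tail bound to hypothesis (ii) and the on-ball estimate to hypothesis (i) is the correct one; the paper's text appears to swap these two labels.)
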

\begin{proof}
 It suffices to show that for any initial data $\phi\in {\cal H}$, $\{{\mathbb P}(\phi,t,\cdot):t\geq0\}$ is Cauchy in the space $\mathscr{P}({\cal H})$ with the metric $d(\cdot, \cdot)$ in (\ref{07/06/17(1)}).
To this end, we need only show that for any fixed $\phi\in {\cal H}$ and $\varepsilon>0$, there exists a time $T>0$ such that
\begin{equation}
\label{25/10/17(1)}
d({\mathbb P}(\phi,t+s,\cdot),{\mathbb P}(\phi,t,\cdot))=\sup_{f\in {\cal M}}|\mathbb{E}f(Y(t+s, \phi))-\mathbb{E}f(Y(t, \phi))| \leq\varepsilon, \;\;\;\;\;\;\; \forall \, t\geq T, \;s>0.
\end{equation}
Indeed, for any $f\in {\cal M}$ and $t,s>0$, we can obtain that
\begin{equation}
\begin{split}
|\mathbb{E}&f(Y(t+s, \phi))-\mathbb{E}f(Y(t, \phi))|\\ &=|\mathbb{E}[\mathbb{E}f(Y(t+s, \phi))|{\mathscr F}_s)]-\mathbb{E}f(Y(t, \phi))|\\
&=\Big|\int_{H}\mathbb{E}f(Y(t, \psi)){\mathbb P}(\phi,s,d\psi)-\mathbb{E}f(Y(t, \phi))\Big|\\
&\leq \int_{H}|\mathbb{E}f(Y(t, \psi))-\mathbb{E}f(Y(t, \phi))|{\mathbb P}(\phi,s,d\psi)\\
&\leq 2{\mathbb P}(\phi,s, {\cal H}^c_R)+\int_{{\cal H}_R}|\mathbb{E}f(Y(t, \psi))-\mathbb{E}f(Y(t, \phi))|{\mathbb P}(\phi,s,d\psi),
\end{split}
\end{equation}
where ${\cal H}_R=\{\phi\in {\cal H}:\|\phi\|_{\cal H}\leq R\}$ and ${\cal H}^c_R={\cal H}-{\cal H}_R$. By virtue of condition (i), there exists a positive number $R$ sufficiently large such that
\begin{equation}
{\mathbb P}(\phi,s, {\cal H}^c_R)<\frac{\varepsilon}{4}, \;\;\;\;\; \forall s>0.
\end{equation}
On the other hand, by virtue of condition (ii), there exists a time $T_2>0$ such that
\begin{equation}
\sup_{f\in {\cal M}}|\mathbb{E}f(Y(t, \psi))-\mathbb{E}f(Y(t, \phi))|\leq \frac{\varepsilon}{2}, \;\;\;\; t\geq T_2.
\end{equation}
Hence, substituting (5.21), (5.22) into (5.20) immediately yields that
$$|\mathbb{E}f(Y(t+s, \phi))-\mathbb{E}f(Y(t, \phi))|\leq\varepsilon, \;\;\;\;\; t\geq T_2, \; s>0.$$
Since function  $f\in {\cal M}$ is arbitrary, we can get the desired result (\ref{25/10/17(1)}). Hence, the transition probability ${\mathbb P}(\phi, t, \cdot)$ of $Y(t, \phi)$ converges weakly to some $\mu\in {\mathscr P}({\cal H})$.
On the other hand, for any $f\in C_b({\cal H})$, one has by the Markovian property of $Y(t, \phi)$, $t\ge 0$ that
\[
{\mathbb P}_{t+s}f(\phi) = {\mathbb P}_t{\mathbb P}_sf(\phi),\hskip 15pt t,\,\,s\ge 0,\,\,\,
\phi\in {\cal H}.\]
Hence, for any fixed $t\ge 0$, as $s\to \infty$, it follows that
\[
\mu(f) = \mu({\mathbb P}_tf),\hskip 20pt f\in C_b({\cal H}).\]
That is, $\mu$ is a stationary distribution for $Y(t) = (y(t), y_t)$, $t\ge 0$, of equation (\ref{24/05/17(1)}) provided that (i) and (ii) above hold.
 The proof is thus complete.
\end{proof}

Recall that $e^{t{\cal A}}$, $t\ge 0$, is the $C_0$-semigroup on ${\cal H}$ given in Proposition \ref{09/08/07(10)}.

\begin{theorem}
\label{05/06/17(80)}
Assume that $\|e^{t{\cal A}}\|\le Me^{-\gamma t}$, $M\ge 1$, $\gamma>0$ for all $t\ge 0$. Suppose that (\ref{24/05/17(2)}) holds  and
\begin{equation}
\label{26/05/17(10)}
2\gamma > 3M^2(\alpha_1 + \alpha_2 e^{2\gamma r}\kappa([-r, 0])),
\end{equation}
then
there is a unique stationary distribution $\mu \in {\mathscr P}({\cal H})$ for $Y(t)=(y(t), y_t)$, $t\ge 0$, of (\ref{24/05/17(1)}).
\end{theorem}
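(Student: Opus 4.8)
The plan is to verify the two hypotheses of Lemma~\ref{25/10/17(2)} for the process $Y(t,\phi)=(y(t,\phi),y_t(\cdot,\phi))$ associated with (\ref{24/05/17(1)}), which yields existence of a stationary distribution, and then to upgrade the contraction estimate behind hypothesis (i) into uniqueness. The starting point is the variation of constants representation of the mild solution in terms of the Green (fundamental) operator $G(t)$. Since the noise in (\ref{24/05/17(1)}) enters only through the lift $L(\cdot)\in{\mathscr L}_2(K_Q,{\mathbb H})$ of $R$, the stochastic analogue of (\ref{29/01/08(1)}) reads
\[
y(t,\phi)=G(t)\phi_0+\int^0_{-r}G(t+\theta)(S\phi_1)(\theta)\,d\theta+\int^t_0 G(t-s)L(y(s),y_s)\,dW(s),\qquad t\ge 0.
\]
By Proposition~\ref{25/09/08(10)} the standing hypothesis $\|e^{t{\cal A}}\|\le Me^{-\gamma t}$ is equivalent to $\|G(t)\|\le Me^{-\gamma t}$, $t\ge 0$, so every occurrence of $G$ above is exponentially decaying, which is the quantitative input driving all subsequent estimates.

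For hypothesis (i) I would fix $\phi,\psi$ in a bounded set $U$ and set $\eta(t):={\mathbb E}\|y(t,\phi)-y(t,\psi)\|^2_{\mathbb H}$. Subtracting the two representations and using $\|a+b+c\|^2\le 3(\|a\|^2+\|b\|^2+\|c\|^2)$ (this is the source of the numerical factor $3$ in (\ref{26/05/17(10)})), the first two terms are deterministic and decay like $M^2e^{-2\gamma t}$, while the It\^o isometry applied to the stochastic convolution together with the Lipschitz bound (\ref{24/05/17(2)}) yields
\[
\eta(t)\le C(t)+3M^2\int^t_0 e^{-2\gamma(t-s)}\Big[\alpha_1\eta(s)+\alpha_2\int^0_{-r}\eta(s+\theta)\,\kappa(d\theta)\Big]ds,
\]
with $C(t)\to 0$ exponentially and $\eta$ controlled on $[-r,0]$ by the initial data. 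Substituting $u=s+\theta$ in the delayed double integral produces the weight $e^{-2\gamma\theta}\le e^{2\gamma r}$, which is exactly where the factor $e^{2\gamma r}\kappa([-r,0])$ in (\ref{26/05/17(10)}) originates. Multiplying through by $e^{2\gamma t}$ and reading the resulting renewal inequality as a comparison/Gronwall estimate, the condition $2\gamma>3M^2(\alpha_1+\alpha_2 e^{2\gamma r}\kappa([-r,0]))$ forces $\sup_{t\ge 0}e^{\rho t}\eta(t)<\infty$ for some $\rho\in(0,2\gamma)$; hence $\eta(t)\to 0$, and since ${\mathbb E}\|Y(t,\phi)-Y(t,\psi)\|^2_{\cal H}=\eta(t)+\int^0_{-r}\eta(t+\theta)\,d\theta$, hypothesis (i) follows uniformly over $U$. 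Hypothesis (ii) is obtained by the same representation applied to a single solution, replacing (\ref{24/05/17(2)}) by the linear-growth bound it implies (using the finiteness of $\|L(0)\|_{{\mathscr L}_2(K_Q,{\mathbb H})}$); the identical smallness condition (\ref{26/05/17(10)}) keeps the second moment bounded uniformly in $t$.

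With (i) and (ii) in hand, Lemma~\ref{25/10/17(2)} gives a stationary distribution $\mu$. For uniqueness I would take two stationary measures $\mu_1,\mu_2$ and use stationarity to write, for every $t\ge 0$, $d(\mu_1,\mu_2)=d(\mu_1{\mathbb P}_t,\mu_2{\mathbb P}_t)$; bounding the integrand of the metric $d(\cdot,\cdot)$ in (\ref{07/06/17(1)}) by $|{\mathbb E}f(Y(t,\phi))-{\mathbb E}f(Y(t,\psi))|\le({\mathbb E}\|Y(t,\phi)-Y(t,\psi)\|^2_{\cal H})^{1/2}$ for $f\in{\cal M}$ and integrating against $\mu_1\otimes\mu_2$, the decay established in (i) forces $d(\mu_1,\mu_2)=0$, i.e.\ $\mu_1=\mu_2$.

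I expect the main obstacle to be the closure of the delayed renewal inequality for $\eta$: the coupling term $\int^0_{-r}\eta(s+\theta)\,\kappa(d\theta)$ mixes present and past, so a naive Gronwall argument fails, and one must track the exponentially weighted quantity $e^{2\gamma t}\eta(t)$ carefully so that the shift producing $e^{2\gamma r}$ and the smallness gap in (\ref{26/05/17(10)}) combine to yield a genuine contraction rate $\rho>0$. Establishing the stochastic variation of constants formula above, namely that the Green-operator convolution correctly represents the noise term of the lifted system, is the other point that must be justified rather than assumed.
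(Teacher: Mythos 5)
Your proposal follows essentially the same route as the paper: verify the two hypotheses of Lemma~\ref{25/10/17(2)} via the Green-operator variation-of-constants formula, Proposition~\ref{25/09/08(10)}, the It\^o isometry with the factor-$3$ splitting, the shift $s+\theta\mapsto s$ producing $e^{2\gamma r}\kappa([-r,0])$, and an exponentially weighted Gronwall argument giving decay at rate $\alpha=2\gamma-3M^2(\alpha_1+\alpha_2e^{2\gamma r}\kappa([-r,0]))>0$; uniqueness is then obtained exactly as in the paper by integrating the contraction estimate against the product of two stationary measures (the paper additionally records $\pi(\|\cdot\|_{\cal H})<\infty$ to justify that integration, which you should note). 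No genuinely different idea is involved, and the argument is correct.
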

\begin{proof}
By Lemma \ref{25/10/17(2)}, we need only verify the following assertions: for any bounded subset $U$ of ${\cal H}$,

(i) $\lim_{t\to\infty} \sup_{\phi,\,\psi\in U}{\mathbb E}\|Y(t, \phi)-Y(t, \psi)\|^2_{\cal H}=0;$

(ii) $\sup_{t\ge 0}\sup_{\phi\in U}{\mathbb E}\|Y(t, \phi)\|^2_{\cal H}<\infty.$

First, it is known that the mild solution of (\ref{24/05/17(1)}) can be represented explicitly as
\begin{equation}
\label{02/12/1677(3)}
y(t, \phi) = G(t)\phi_0 + \int^0_{-r}G(t+\theta)S\phi_1(\theta)d\theta + \int^t_0 G(t-s)L(y(s, \phi), y_s(\phi))dW(s),
\end{equation}
where $G(\cdot)$ is the fundamental solution of (\ref{24/05/17(1)}) and $S\in {\mathscr L}(L^2_r)$ is the associated structure operator.  To proceed further, let us consider the difference of two mild solutions of (\ref{24/05/17(1)}) with distinct initial data $\phi,\,\psi\in {\cal H}$:
\[
\Sigma(t, \phi, \psi) =y(t, \phi) -y(t, \psi),\hskip 20pt t\ge 0.\]
Note that, by assumption, we have according to Proposition \ref{25/09/08(10)} that $\|G(t)\|\le Me^{-\gamma t}$, $M\ge 1$, $\gamma>0$ for all $t\ge 0$.
By virtue of (\ref{24/05/17(2)}), (\ref{02/12/1677(3)}) and It\^o isometry, it follows that
\begin{equation}
\label{04/12/16(1)}
\begin{split}
&{\mathbb E}\|\Sigma(t, \phi, \psi)\|^2_{\mathbb H}\\
 &\le 3{\mathbb E}\Big\{\|G(t)(\phi_0-\psi_0)\|^2_{\mathbb H} + \Big\|\int^0_{-r} G(t+\theta)S(\phi_1(\theta) -\psi_1(\theta))d\theta\Big\|^2_{\mathbb H}\\
&\,\,\,\,\,\, + \Big\|\int^t_0 G(t-s)\big[L(y(s, \phi), y_s(\phi))-L(y(s, \psi), y_s(\psi))\big]dW(s)\Big\|^2_{\mathbb H}\Big\}\\
&\le 3(M^2+M^2\|S\|^2_{{\mathscr L}(L^2_r)}re^{2\gamma r})e^{-2\gamma t}\|\phi-\psi\|^2_{\cal H}+ 3M^2\alpha_1\int^t_0 e^{-2\gamma(t-s)}{\mathbb E}\|y(s, \phi)-y(s, \psi)\|^2_{\mathbb H}ds\\
&\,\,\,\,\,+ 3M^2\alpha_2\int^t_0 e^{-2\gamma(t-s)}\int^0_{-r}{\mathbb E}\|y(s+\theta, \phi)-y(s+\theta, \psi)\|^2_{\mathbb H} \kappa(d\theta)ds,\hskip 20pt t\ge 0.
\end{split}
\end{equation}
On the other hand, it is easy to see that for any $t\ge 0$,
\begin{equation}
\label{25/05/17(1)}
\begin{split}
&\int^t_0 e^{-2\gamma(t-s)}\int^0_{-r} {\mathbb E}\|y(s+\theta, \phi)-y(s+\theta, \psi)\|^2_{\mathbb H} \kappa(d\theta)ds\\
 &\le \int^0_{-r} \int^t_{-r} e^{-2\gamma(t-s+\theta)}{\mathbb E}\|y(s, \phi)-y(s, \psi)\|^2_{\mathbb H}ds \kappa(d\theta)\\
&\le e^{2\gamma r}\kappa([-r, 0])\Big(\|\phi-\psi\|^2_{\cal H} + \int^t_0 e^{-2\gamma(t-s)}{\mathbb E}\|y(s, \phi)-y(s, \psi)\|^2_{\mathbb H}ds\Big).
\end{split}
\end{equation}
Substituting (\ref{25/05/17(1)}) into (\ref{04/12/16(1)}), one can have that for $t\ge 0$,
\begin{equation}
\label{26/05/17(1)}
\begin{split}
{\mathbb E}\|y(t, \phi)-y(t, \psi)\|^2_{\mathbb H}\le C_1e^{-2\gamma t}\|\phi-\psi\|^2_{\cal H} + C_2\int^t_0 e^{-2\gamma(t-s)}\|y(s, \phi)-y(s, \psi)\|^2_{\mathbb H} ds,
 \end{split}
 \end{equation}
 where
 \[
 C_1= 3M^2(1+\|S\|^2_{{\mathscr L}(L^2_r)}re^{2\gamma r}) + 3M^2\alpha_2 e^{2\gamma r}\kappa([-r, 0])>0\]
  and
  \[
  C_2 = 3M^2(\alpha_1+ \alpha_2 e^{2\gamma r}\kappa([-r, 0]))>0.\]
  Now, by multiplying $e^{2\gamma t}$ on both sides of (\ref{26/05/17(1)}), we obtain for any $t\ge 0$ that
\[
e^{2\gamma t}{\mathbb E}\|y(t, \phi)-y(t, \psi)\|^2_{\mathbb H} \le C_1\|\phi-\psi\|^2_{\cal H} + C_2\int^t_0 e^{2\gamma s}{\mathbb E}\|y(s, \phi)-y(s, \psi)\|^2_{\mathbb H} ds.\]
Hence, letting $\alpha = 2\gamma-3M^2(\alpha_1+ \alpha_2 e^{2\gamma r}\kappa([-r, 0]))>0$ and using the well-known Gronwall lemma, we have
\begin{equation}
\label{30/05/17(1)}
{\mathbb E}\|y(s, \phi)-y(s, \psi)\|^2_{\mathbb H}\le  C_1\|\phi-\psi\|^2_{\cal H}e^{-\alpha t},\hskip 20pt t\ge 0.
\end{equation}
Further, we may obtain that for all $t\ge 0$,
\begin{equation}
\label{05/15/16(1)}
\begin{split}
{\mathbb E}\int^0_{-r} \|y(t+\theta, \phi)-y(t+\theta, \psi)\|^2_{\mathbb H}d\theta &\le C_1\int^0_{-r} \|\phi-\psi\|^2_{\cal H}e^{-\alpha(t+\theta)}d\theta\\
&\le  rC_1\|\phi-\psi\|^2_{\cal H} e^{-\alpha (t-r)}.
\end{split}
\end{equation}
Hence, the relation (i) holds.

Next, we show that (ii) is also valid. To this end, first note that
\[
(a+b)^2\le (1+\varepsilon)a^2 + \Big(1+\frac{1}{\varepsilon}\Big)b^2\hskip 10pt \hbox{for any}\hskip 10pt a,\,\,b\ge 0\hskip 10pt \hbox{and}\hskip 10pt \varepsilon>0.\]
 Then by using (\ref{02/12/1677(3)}), (\ref{05/15/16(1)}) we can utilize It\^o's isometry and carry out a similar argument to (\ref{25/05/17(1)}) to have that for $t\ge 0$,
\begin{equation}
\label{05/12/16(10)}
\begin{split}
&{\mathbb E}\|y(t, \phi)\|^2_{\mathbb H}\\ &\le 3\Big\{\|G(t)\phi_0\|^2_{\mathbb H} + \Big\|\int^0_{-r} G(t-s)S\phi_1(\theta)d\theta\Big\|^2_{\mathbb H} + {\mathbb E}\int^t_0 \|G(t-s)L(y(s), y_s)\|^2_{{\mathscr L}_2(K_Q, {\mathbb H})}ds\Big\}\\
&\le 3M^2(1+\|S\|^2_{{\mathscr L}(L^2_r)}re^{2\gamma r})e^{-2\gamma t}\|\phi\|^2_{\cal H} + 3M^2\int^t_0 e^{-2\gamma(t-s)}\Big((1+\varepsilon)\alpha_1{\mathbb E}\|y(s, \phi)\|^2_{\mathbb H}\\
 &\,\,\,\,\, + (1+\varepsilon)\alpha_2\int^0_{-r}{\mathbb E}\|y(s+\theta, \phi)\|^2_{\mathbb H}\kappa(d\theta) + \Big(1+\frac{1}{\varepsilon}\Big)\|L(0, 0)\|^2_{{\mathscr L}_2(K_Q, {\mathbb H})}\Big)ds\\
&\le 3M^2(1 + \|S\|^2_{{\mathscr L}(L^2_r)}re^{2\gamma r} + e^{2\gamma r}\kappa([-r, 0])(1+\varepsilon)\alpha_2 e^{-2\gamma t}\|\phi\|^2_{\cal H}\\
 &\,\,\,\,\, + \frac{3M^2}{2\gamma}\Big(1+\frac{1}{\varepsilon}\Big)\|L(0, 0)\|^2_{{\mathscr L}_2(K_Q, {\mathbb H})} + 3M^2\alpha_1(1+\varepsilon)\int^t_0 e^{-2\gamma(t-s)}{\mathbb E}\|y(s, \phi)\|^2_{{\mathbb H}}ds \\
&\,\,\,\,\,+ 3M^2\alpha_2 e^{2\gamma r}(1+\varepsilon)\kappa([-r, 0])\int^t_0 e^{-2\gamma(t-s)}{\mathbb E}\|y(s, \phi)\|_{\mathbb H}^2ds.
\end{split}
\end{equation}
 By choosing $\varepsilon>0$ sufficiently small and using condition (\ref{26/05/17(10)}) and  the well-known Gronwall lemma, we thus obtain
\begin{equation}
\label{30/05/17(2)}
\sup_{t\ge 0}{\mathbb E}\|y(t, \phi)\|^2_{\mathbb H} \le C_3\|\phi\|^2_{\cal H} + \frac{3M^2}{2\gamma}\Big(1+\frac{1}{\varepsilon}\Big)\|L(0, 0)\|^2_{{\mathscr L}_2(K_Q, {\mathbb H})}<\infty,
\end{equation}
where
\[
C_3 =3M^2(1 + \|S\|^2_{{\mathscr L}(L^2_r)}re^{2\gamma r} + (\alpha_1+\alpha_2)(1+\varepsilon)e^{2\gamma r}\kappa([-r, 0]))>0.\]
By carrying out a similar argument to that of (\ref{05/15/16(1)}) and taking (\ref{05/12/16(10)}) into account, we easily have
\begin{equation}
\label{05/12/16(11)}
\sup_{t\ge 0}{\mathbb E}\int^0_{-r} \|y(t+\theta, \phi)\|^2_{\mathbb H}d\theta \le rC_3e^{\alpha r}\|\phi\|^2_{\cal H} + \frac{3M^2}{2\gamma}\Big(1+\frac{1}{\varepsilon}\Big)\|L(0, 0)\|^2_{{\mathscr L}_2(K_Q, {\mathbb  H})},
\end{equation}
and the claim (ii) is thus verified. Last, (\ref{30/05/17(2)}) and (\ref{05/12/16(11)}) together imply that
\begin{equation}
\label{05/12/16(12)}
\begin{split}
\sup_{t\ge 0}{\mathbb E}\|Y(t, \phi)\|^2_{\cal H} \le C_3(1+ re^{\alpha r})\|\phi\|^2_{\cal H} + \frac{3M^2}{2\gamma}\Big(1+\frac{1}{\varepsilon}\Big)\|L(0, 0)\|^2_{{\mathscr L}_2(K_Q, {\mathbb H})}<\infty.
\end{split}
\end{equation}
Employing the invariance of $\pi\in {\mathscr P}({\cal H})$ and integrating with respect to $\pi$ on both sides of (\ref{05/12/16(12)}) lead to
\begin{equation}
\label{05/12/16(50)}
\pi(\|\cdot\|_{\cal H})<\infty.
\end{equation}
Now we show the uniqueness of stationary distributions. If $\pi'\in {\mathscr P}({\cal H})$ is another stationary distribution for $Y(t) = (y(t), y_t)$, $t\ge 0$, of equation (\ref{24/05/17(1)}). Let $f\in C_{LB}({\cal H})$, the family of all bounded and Lipschitz continuous functions on ${\cal H}$. Then by virtue of (\ref{30/05/17(1)}), (\ref{05/15/16(1)}), (\ref{05/12/16(50)}), H\"older's inequality and the invariance of $\pi(\cdot),\,\pi'(\cdot)\in {\mathscr P}({\cal H})$, it follows that
\begin{equation}
\label{05/12/16(51)}
|\pi(f)-\pi'(f)|\le \int_{{\cal H}\times{\cal H}}|{\mathbb P}_tf(\phi)- {\mathbb P}_tf(\psi)|\pi(d\phi)\pi'(d\psi)\le C_4e^{-\alpha t},\hskip 15pt t\ge 0,
\end{equation}
for some constant $C_4>0$.
This implies the uniqueness of stationary distributions by letting $t\to \infty$ in (\ref{05/12/16(51)}). The proof is thus complete.
\end{proof}

\section{\large Systems Driven by L\'evy Jump Processes}

First, let $Z$ be a ${K}$-valued L\'evy process with its L\'evy triple $(0, Q, \nu)$.
For $t>0$ and $\Gamma\in {\mathscr B}({K}-\{0\}),$ we define a Poisson random measure generated by $Z(t)$ as
\[
N(t, \Gamma) = \sum_{s\in (0, t]}{\bf 1}_\Gamma(\Delta Z(s)),\]
where $\Delta Z(t) := Z(t)-Z(t-)$ for $t\ge 0$ and the compensated Poisson random measure is given by
\[
\tilde N(t, \Gamma) = N(t, \Gamma)-t\nu(\Gamma),\hskip 15pt t\ge 0,\hskip 15pt \Gamma\in {\mathscr B}({K}-\{0\}).\]
It is well known that the L\'evy process $Z(t)$ has the following L\'evy-It\^o decomposition
\begin{equation}
\label{30/10/17(4)}
Z(t) = W_Q(t) + \int_{{K}-\{0\}}z\tilde N(t, dz)
\end{equation}
with Tr$\,(Q)<\infty$.

Consider a second-order stochastic retarded differential equation driven by the L\'evy process $Z$ in $H$, i.e., for $t\ge 0$,
\begin{equation}
\label{12/09/17(1348967678)}
\begin{cases}
d\Big(\displaystyle\frac{du(t)}{dt}\Big)  + Au(t)dt = Bu'(t)dt+ Mu_tdt + Nu'_tdt  + R(u(t-), u'(t-), u_{t-}, u'_{t-})dZ(t),\\
u(0)= \phi_{0, 1}\in V,\,\,\,u'(0)=\phi_{0, 2}\in H,\\
u_0= \phi_{1,1}\in L^2([-r, 0], V),\,\,\, u'_0=\phi_{1, 2}\in L^2([-r, 0], H),
\end{cases}
\end{equation}
where $u(t-) = \lim_{s\uparrow t}u(s)$, $u_{t-}(\theta) := \lim_{s\uparrow t}u(s+\theta)$, for $t\ge 0$ and $\theta\in [-r, 0]$, and the non-linear mapping
\[
R: V\times H\times L^2([-r, 0], V)\times L^2([-r, 0], H)\to {\mathscr L}(K, H)\]
 is assumed to be Borel measurable and there exist constants $\alpha_1,\,\alpha_2>0$ and a finite measure $\kappa(\cdot)$ on $[-r, 0]$
 such that
\begin{equation}
\label{08/10/17(26023)}
\begin{split}
\|R(\phi)-R(\psi)&\|_{{\mathscr L}(K, H)}^2\le \alpha_1\Big(\|\phi_{0, 1}-\psi_{0, 1}\|^2_V + \|\phi_{0, 2}-\psi_{0, 2}\|^2_H\Big)\\
&\,\, + \alpha_2\Big(\int^0_{-r} \|\phi_{1, 1}(\theta) - \psi_{1,1}(\theta)\|^2_V \kappa(d\theta) + \int^0_{-r} \|\phi_{1, 2}(\theta) - \psi_{1,2}(\theta)\|^2_H \kappa(d\theta)\Big)
\end{split}
\end{equation}
for any $\phi = (\phi_0, \phi_1) = (\phi_{0, 1}, \phi_{0, 2}, \phi_{1,1}, \phi_{1, 2}),\, \psi =(\psi_0, \psi_1) =  (\psi_{0, 1}, \psi_{0, 2}, \psi_{1,1}, \psi_{1, 2})\in V\times H\times L^2([-r, 0], V)\times L^2([-r, 0], H)$.
Defining $y(t) = \displaystyle{u(t)\choose u'(t)}$,  $y_t = \displaystyle{u_t\choose u'_t}$ and a nonlinear mapping $L$,
\[
L(\phi_0, \phi_1) = {0\choose R(\phi_{0, 1}, \phi_{0, 2}, \phi_{1,1}, \phi_{1,2})}\]
for any $\phi = (\phi_0, \phi_1) = (\phi_{0, 1}, \phi_{0, 2}, \phi_{1,1}, \phi_{1, 2})\in V\times H\times L^2([-r, 0], V)\times L^2([-r, 0], H)$,
 we can transform (\ref{12/09/17(1348967678)}) into a first-order stochastic system in ${\mathbb H}$
\begin{equation}
\label{24/05/17(1503)}
\begin{cases}
dy(t) = \Lambda y(t)dt + Fy_tdt + L(y(t-), y_{t-})dZ(t),\,\,\,\,t\ge 0,\\
y(0)= \phi_0 = \displaystyle{\phi_{0, 1}\choose \phi_{0, 2}}\in {\mathbb H},\,\,\,y_0 =\phi_1 = {\phi_{1, 1}\choose  \phi_{1, 2}}\in L^2([-r, 0], {\mathbb H}),
\end{cases}
\end{equation}
where $L: {\cal H}\to {\mathscr L}(K, {\mathbb H})$ is clearly Borel measurable and  (\ref{08/10/17(26023)}) implies immediately that
 \begin{equation}
\label{24/05/17(269)}
\begin{split}
\|L(\phi)-L(\psi)\|^2_{{\mathscr L}(K, {\mathbb H})}\le \alpha_1\|\phi_0-\psi_0\|^2_{\mathbb H} + &\,\alpha_2\int^0_{-r} \|\phi_1(\theta)-\psi_1(\theta)\|^2_{L^2([-r, 0], {\mathbb H})}\kappa(d\theta),\\
&\forall\,\phi=(\phi_0, \phi_1),\,\psi=(\psi_0, \psi_1)\in {\cal H}.
\end{split}
\end{equation}

 \begin{theorem}
 \label{05/11/17(10)}
Assume that $\|e^{t{\cal A}}\|\le Me^{-\gamma t}$, $M\ge 1$, $\gamma>0$ for all $t\ge 0$. Suppose that (\ref{08/10/17(26023)}) holds  and
\begin{equation}
\label{26/05/17(120)}
\int_{z\not= 0}\|z\|^2_{K}\nu(dz)<\infty,
\end{equation}
and further
\begin{equation}
\label{26/05/17(101)}
2\gamma > 3M^2\Big(Tr(Q) + \int_{z\not= 0}\|z\|^2_K \nu(dz)\Big)(\alpha_1+\alpha_2e^{2\gamma r}\kappa([-r, 0])),
\end{equation}
then
there exists a unique stationary distribution $\mu \in {\mathscr P}({\cal H})$ for $Y(t)=(y(t), y_t)$, $t\ge 0$, of (\ref{12/09/17(1348967678)}).
\end{theorem}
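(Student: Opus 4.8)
The plan is to mirror the proof of Theorem \ref{05/06/17(80)}, the only genuinely new ingredient being the treatment of the jump part of the driving noise. By Lemma \ref{25/10/17(2)} it suffices to verify, for every bounded subset $U$ of ${\cal H}$, the two assertions (i) $\lim_{t\to\infty}\sup_{\phi,\psi\in U}{\mathbb E}\|Y(t,\phi)-Y(t,\psi)\|^2_{\cal H}=0$ and (ii) $\sup_{t\ge 0}\sup_{\phi\in U}{\mathbb E}\|Y(t,\phi)\|^2_{\cal H}<\infty$, from which the existence of a stationary distribution follows, with uniqueness obtained separately. First I would record the mild-solution representation for (\ref{24/05/17(1503)}), namely $y(t,\phi)=G(t)\phi_0+\int_{-r}^0 G(t+\theta)S\phi_1(\theta)\,d\theta+\int_0^t G(t-s)L(y(s-,\phi),y_{s-}(\phi))\,dZ(s)$, where $G$ is the Green operator of the deterministic part of Section 4 and $S$ is the associated structure operator, and note that by Proposition \ref{25/09/08(10)} together with the hypothesis $\|e^{t{\cal A}}\|\le Me^{-\gamma t}$ one has $\|G(t)\|\le Me^{-\gamma t}$.

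The crucial step is the second-moment estimate for the stochastic convolution. Using the L\'evy--It\^o decomposition (\ref{30/10/17(4)}), I would split $\int_0^t G(t-s)L(\cdots)\,dZ(s)$ into its Wiener part $\int_0^t G(t-s)L(\cdots)\,dW_Q(s)$ and its compensated-Poisson part $\int_0^t\int_{z\neq 0}G(t-s)L(\cdots)z\,\tilde N(ds,dz)$; since these two martingales are orthogonal, the second moment splits additively. For the Wiener part the It\^o isometry gives a contribution controlled by $Tr(Q)\,\|L(\cdots)\|^2_{{\mathscr L}(K,{\mathbb H})}$, via $\|LQ^{1/2}\|^2_{{\mathscr L}_2}\le Tr(Q)\|L\|^2_{{\mathscr L}(K,{\mathbb H})}$, while for the jump part the isometry for compensated Poisson integrals yields ${\mathbb E}\int_0^t\int_{z\neq 0}\|G(t-s)L(\cdots)z\|^2_{\mathbb H}\,\nu(dz)\,ds$, which is bounded by $\big(\int_{z\neq 0}\|z\|^2_K\nu(dz)\big)\|L(\cdots)\|^2_{{\mathscr L}(K,{\mathbb H})}$ using the finiteness condition (\ref{26/05/17(120)}). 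Combining the two produces precisely the factor $Tr(Q)+\int_{z\neq 0}\|z\|^2_K\nu(dz)$ in front of the Lipschitz coefficients $\alpha_1,\alpha_2$ from (\ref{08/10/17(26023)}), so the analogue of (\ref{04/12/16(1)}) holds with each $M^2\alpha_i$ replaced by $M^2\big(Tr(Q)+\int_{z\neq 0}\|z\|^2_K\nu(dz)\big)\alpha_i$.

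From here the argument runs parallel to Theorem \ref{05/06/17(80)}. For the difference of two solutions I would exchange the order of integration exactly as in (\ref{25/05/17(1)}) to absorb the delayed term, arriving at an integral inequality of the form (\ref{26/05/17(1)}) in which now $C_2=3M^2\big(Tr(Q)+\int_{z\neq 0}\|z\|^2_K\nu(dz)\big)(\alpha_1+\alpha_2 e^{2\gamma r}\kappa([-r,0]))$; multiplying by $e^{2\gamma t}$ and applying Gronwall's lemma gives exponential decay at rate $\alpha=2\gamma-C_2$, which is positive precisely by hypothesis (\ref{26/05/17(101)}), yielding (i). For (ii) I would repeat the computation on a single solution using $(a+b)^2\le(1+\varepsilon)a^2+(1+1/\varepsilon)b^2$ and the constant term $\|L(0,0)\|^2_{{\mathscr L}(K,{\mathbb H})}$, choose $\varepsilon>0$ small so that (\ref{26/05/17(101)}) still leaves a positive Gronwall exponent, and obtain the uniform bounds (\ref{30/05/17(2)})--(\ref{05/12/16(12)}). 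Uniqueness then follows by applying the contraction estimate (i) between two stationary measures $\pi,\pi'$ through a coupling bound of the type (\ref{05/12/16(51)}) and letting $t\to\infty$.

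I expect the main obstacle to be the careful justification of the jump stochastic-convolution isometry and the orthogonality of the Wiener and compensated-Poisson martingale parts, rather than any deep new idea. In particular, one must confirm well-posedness of (\ref{24/05/17(1503)}) in the jump setting, i.e. the existence of a c\`adl\`ag mild solution for which $Y(t)=(y(t),y_t)$ is a Markov process, so that the representation above and the Markov-property reduction underlying Lemma \ref{25/10/17(2)} apply verbatim; the moment condition (\ref{26/05/17(120)}) is exactly what makes the compensated-Poisson integral square-integrable and the factor $\int_{z\neq 0}\|z\|^2_K\nu(dz)$ finite, and is therefore indispensable to the estimate of the second paragraph.
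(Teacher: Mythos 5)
Your proposal is correct and follows essentially the same route as the paper: reduce to conditions (i) and (ii) of Lemma \ref{25/10/17(2)}, represent the mild solution by the variation-of-constants formula, split the stochastic convolution via the L\'evy--It\^o decomposition (\ref{30/10/17(4)}), estimate the Wiener part by the It\^o isometry with the factor $Tr(Q)$ and the compensated-Poisson part by the corresponding isometry with the factor $\int_{z\neq 0}\|z\|^2_K\nu(dz)$, and then run the Gronwall argument of Theorem \ref{05/06/17(80)}. The only cosmetic difference is that you invoke orthogonality of the two martingale parts to add their second moments, where the paper simply applies an elementary sum-of-squares bound with explicit constants; this changes nothing of substance.
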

\begin{proof}
By the variation-of-constants formula, one has
\begin{equation}
\label{30/10/17(3)}
y(t, \phi) = G(t)\phi_0 + \int^0_{-r} G(t+\theta)S\phi_1(\theta)d\theta + \int^t_0 G(t-s)L(y(s-), y_{s-})dZ(s).
\end{equation}
Thus, by virtue of (\ref{30/10/17(4)}), we have
\begin{equation}
\label{30/10/17(5)}
\begin{split}
y(t, \phi) &= G(t)\phi_0 + \int^0_{-r} G(t+\theta)S\phi_1(\theta)d\theta + \int^t_0 G(t-s)L(y(s), y_{s})dW_Q(s)\\
&\,\,\,\,\, + \int^t_0 \int_{K-\{0\}}G(t-s)L(y(s-), y_{s-})z\tilde N(ds, dz)
\end{split}
\end{equation}
Carrying out a similar argument to that of (\ref{05/12/16(10)}) and taking (\ref{26/05/17(120)}) into account, one can easily obtain that
\[
\begin{split}
{\mathbb E}\|y(t, \phi)\|^2_{\mathbb H} &\le 6\Big\{\|G(t)\phi_0\|^2_{\mathbb H} + \Big\|\int^0_{-r} G(t-s)S\phi_1(\theta)\Big\|^2_{\mathbb H}d\theta\Big\}\\
&\,\,\,\,\ + 3{\mathbb E}\Big\|\int^t_0 G(t-s)L(y(s), y_{s})dW_Q(s)\Big\|^2_{\mathbb H}\\
 &\,\,\,\,\,+ 3{\mathbb E}\Big\|\int^t_0 \int_{z\not= 0}G(t-s)L(y(s-), y_{s-})z\tilde N(ds, dz)\Big\|^2_{\mathbb H}\\
& := I_1(t) +I_2(t) + I_3(t).
\end{split}
\]
Hence, we have by using (\ref{24/05/17(269)}) that for any $t\ge 0$,
\[
I_1(t)\le 6M^2(1+ \|S\|^2_{{\mathscr L}(L^2_r)}re^{2\gamma r})e^{-2\gamma t}\|\phi\|^2_{\cal H},\]
\[
\begin{split}
I_2(t) \le &\,\, 3M^2Tr(Q)\int^t_0 e^{-2\gamma(t-s)}\|L(y(s), y_{s})\|^2_{{\mathscr L}(K, {\mathbb H})}ds\\
 \le &\,\, 6M^2Tr(Q)\alpha_1\int^t_0 e^{-2\gamma(t-s)}{\mathbb E}\|y(s)\|^2_{\mathbb H}ds\\
  &\,\,+ 6M^2Tr(Q)\alpha_2 \int^t_0 e^{-2\gamma(t-s)}\int^0_{-r} {\mathbb E}\|y(s+\theta)\|^2_{\mathbb H}\kappa(d\theta)ds\\
  &\,\, + 6M^2Tr(Q)\int^t_0 e^{-2\gamma(t-s)}\|L(0, 0)\|^2_{{\mathscr L}(K, {\mathbb H})}ds,
 \end{split}\]
and
\[
\begin{split}
I_3(t) &\le 3M^2\int^t_0 \int_{z\not= 0}e^{-2\gamma (t-s)}\|L(y(s-), y_{s-})\|^2_{{\mathscr L}(K, {\mathbb H})}\|z\|^2_H N(ds, dz)\\
&\le 6M^2\int_{z\not= 0}\|z\|^2_H \nu (dz)\Big(\alpha_1\int^t_0 e^{-2\gamma(t-s)}{\mathbb E}\|y(s)\|^2_{\mathbb H}ds\\
&\,\,\,\,\, + \alpha_2 \int^t_0 e^{-2\gamma(t-s)}\int^0_{-r} {\mathbb E}\|y(s+\theta)\|^2_{\mathbb H}\kappa(d\theta)ds + \int^t_0 e^{-2\gamma(t-s)}\|L(0, 0)\|^2_{{\mathscr L}(K, {\mathbb H})}ds\Big).
\end{split}
\]
Therefore, under the conditions (\ref{26/05/17(120)}) and (\ref{26/05/17(101)}),
 we may obtain similarly to (\ref{05/12/16(10)})-(\ref{05/12/16(12)}) the following relation
\begin{equation}
\label{30/10/17(6)}
\sup_{t\ge 0}{\mathbb E}\|y(t, \phi)\|^2_{\mathbb H}<\infty,
\end{equation}
which shows the conclusion (ii) in Lemma \ref{25/10/17(2)}.
Finally, the desired assertion (ii) in Lemma \ref{25/10/17(2)} can be concluded by imitating the arguments of Theorem \ref{05/06/17(80)}.
\end{proof}

It is well-known that for a L\'evy jump process, there is, in general, no finite second moment (\ref{26/05/17(120)}). As a result, there is generally no It\^o's isometry as shown in Theorem  \ref{05/11/17(10)}. To go around this difficulty, let us formulate our system in a product Banach space ${\cal H}_1 = {\mathbb H}\times L^1([-r, 0], {\mathbb H})$, equipped with the norm
\[
\|\phi\|_{{\cal H}_1} = \|\phi_0\|_{\mathbb H} + \int^0_{-r} \|\phi_1(\theta)\|_{\mathbb H}d\theta,\hskip 20pt \phi=(\phi_0, \phi_1)\in {\cal H}_1.\]
Consider a linear Cauchy problem with memory in the Hilbert space $H$,
\begin{equation}
\label{12/09/17(13489676)}
\begin{cases}
d\Big(\displaystyle\frac{du(t)}{dt}\Big)  + Au(t)dt = Bu'(t)dt+ Mu_tdt + Nu'_tdt  + RdZ(t),\hskip 15pt t\ge 0,\\
u(0)= \phi_{0, 1}\in V,\,\,\,u'(0)=\phi_{0, 2}\in H,\\
u_0= \phi_{1,1}\in L^2([-r, 0], V),\,\,\, u'_0=\phi_{1, 2}\in L^2([-r, 0], H),
\end{cases}
\end{equation}
where $A$, $B$, $M$, $N$ and $V$ are given as in Section 4, $Z$ is a ${K}$-valued L\'evy process with its L\'evy triple $(0, 0, \nu)$ and $R\in {\mathscr L}(K, H)$.
Let us rewrite this problem into a first-order stochastic delay equation.
Precisely, by defining $y(t) = \displaystyle{u(t)\choose u'(t)}$, so $y_t = \displaystyle{u_t\choose u'_t}$ and a bounded linear operator $L\in {\mathscr L}(K, {\mathbb H})$ by
\[
L = {0\choose R}: K\to {\mathbb H},\,\,\,Lx={0\choose Rx},\,\,x\in K,\]
 we can transform (\ref{12/09/17(13489676)}) into a first-order stochastic system
\begin{equation}
\label{24/05/17(145)}
\begin{cases}
dy(t) = \Lambda y(t)dt + Fy_tdt + LdZ(t),\,\,\,\,t\ge 0,\\
y(0)=\phi_0,\,\,y_0 =\phi_1,\,\,\phi=(\phi_0, \phi_1)\in {\cal H}_1.
\end{cases}
\end{equation}
Note that $\|L\|=\|R\|$. Indeed, for any $x\in K$,
\[
\|Lx\|^2_{\mathbb H} = \|Rx\|^2_H\le \|R\|^2\|x\|^2_K,\]
i.e., $\|L\|\le \|R\|.$ On the other hand, by definition, it is true that
\[
\|Rx\|^2_H = \|Lx\|^2_{{\mathbb H}}\le \|L\|^2\|x\|^2_K,\hskip 15pt \forall\, x\in K,\]
that is, $\|R\|\le \|L\|$. Hence, $\|R\|=\|L\|.$

\begin{theorem}
\label{05/06/17(20)}
 Suppose that the Green operator $G(t)$ is exponentially stable, i.e.,  $\|G(t)\|\le Me^{-\gamma t}$, $M\ge 1$, $\gamma>0$ for all $t\ge 0$. Assume further that
\[
\int_{\|z\|_{K}>1} \|z\|_{K}\nu(dz)<\infty.\]
then
there exists a unique stationary distribution $\mu \in {\mathscr P}({\cal H})$ for $Y(t)=(y(t), y_t)$, $t\ge 0$, of (\ref{24/05/17(145)}).
\end{theorem}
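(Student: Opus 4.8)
The plan is to imitate the scheme of Theorem \ref{05/06/17(80)}, but to work throughout in the Banach space ${\cal H}_1$ with \emph{first} moments, since the second moment of $Z$ need not exist under the stated hypotheses. First I would record the variation-of-constants representation of the mild solution,
\[
y(t,\phi) = G(t)\phi_0 + \int_{-r}^0 G(t+\theta)(S\phi_1)(\theta)\,d\theta + \int_0^t G(t-s)L\,dZ(s),
\]
and note that $Y(t,\phi)=(y(t,\phi),y_t(\phi))$ is a Markov process on ${\cal H}_1$. The abstract criterion I would rely on is the evident ${\cal H}_1$-analogue of Lemma \ref{25/10/17(2)}: if, for every bounded $U\subset{\cal H}_1$, one has (i') $\lim_{t\to\infty}\sup_{\phi,\psi\in U}{\mathbb E}\|Y(t,\phi)-Y(t,\psi)\|_{{\cal H}_1}=0$ and (ii') $\sup_{t\ge0}\sup_{\phi\in U}{\mathbb E}\|Y(t,\phi)\|_{{\cal H}_1}<\infty$, then a stationary distribution exists. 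Its proof is a verbatim repetition of that of Lemma \ref{25/10/17(2)} with $\|\cdot\|_{\mathbb H}$ replaced by $\|\cdot\|_{{\cal H}_1}$ and the quadratic metric replaced by its first-order Fortet--Mourier counterpart, tightness on bounded sets again coming from (ii') through Chebyshev's inequality.

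The decisive simplification here is that the equation is \emph{linear with additive noise}: $L$ is a fixed operator in ${\mathscr L}(K,{\mathbb H})$ independent of the state. Consequently the stochastic convolution in the representation above is identical for every initial datum, and the difference of two solutions is purely deterministic,
\[
y(t,\phi)-y(t,\psi) = G(t)(\phi_0-\psi_0) + \int_{-r}^0 G(t+\theta)\,S(\phi_1-\psi_1)(\theta)\,d\theta .
\]
Using $\|G(t)\|\le Me^{-\gamma t}$ and the boundedness of $S$ on $L^2_r$, this quantity, and likewise the segment difference $y_t(\phi)-y_t(\psi)$, is dominated by $Ce^{-\gamma(t-r)}\|\phi-\psi\|_{{\cal H}_1}$, so (i') holds with an exponential rate and, crucially, with \emph{no} smallness assumption on the noise.

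The main work, and the point where the hypothesis $\int_{\|z\|_K>1}\|z\|_K\,\nu(dz)<\infty$ enters, is condition (ii'), namely a uniform-in-$t$ bound on ${\mathbb E}\big\|\int_0^t G(t-s)L\,dZ(s)\big\|_{\mathbb H}$. I would split $Z$ through its L\'evy--It\^o decomposition, here with vanishing Gaussian part, into the small-jump martingale $\int_{0<\|z\|\le1}z\,\tilde N$ and the finite-activity large-jump part $\int_{\|z\|>1}z\,N$. For the small jumps the integrand has finite second moment, since $\int_{\|z\|\le1}\|z\|_K^2\,\nu(dz)<\infty$ holds for any L\'evy measure; the It\^o isometry followed by Cauchy--Schwarz gives
\[
{\mathbb E}\Big\|\int_0^t\!\!\int_{\|z\|\le1} G(t-s)Lz\,\tilde N(ds,dz)\Big\|_{\mathbb H}
\le \|L\|M\Big(\tfrac{1}{2\gamma}\Big)^{1/2}\Big(\int_{\|z\|\le1}\|z\|_K^2\,\nu(dz)\Big)^{1/2},
\]
uniformly in $t$, while for the large jumps a direct $L^1$ estimate against the mean measure $ds\,\nu(dz)$ yields
\[
{\mathbb E}\Big\|\int_0^t\!\!\int_{\|z\|>1} G(t-s)Lz\,N(ds,dz)\Big\|_{\mathbb H}
\le \|L\|M\,\frac{1}{\gamma}\int_{\|z\|>1}\|z\|_K\,\nu(dz),
\]
which is finite precisely by the first-moment hypothesis. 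Adding the exponentially decaying deterministic terms and passing to the segment norm establishes (ii'); the hardest point is exactly this separate treatment of small and large jumps, no global second moment of $Z$ being available. Existence then follows from the ${\cal H}_1$-analogue of Lemma \ref{25/10/17(2)}, and uniqueness follows as in Theorem \ref{05/06/17(80)}: the deterministic exponential decay of $y(t,\phi)-y(t,\psi)$ forces $|{\mathbb P}_tf(\phi)-{\mathbb P}_tf(\psi)|\le{\mathbb E}\|Y(t,\phi)-Y(t,\psi)\|_{{\cal H}_1}\to0$ for Lipschitz $f$, and integrating against two invariant measures, whose first moments are finite by (ii') and invariance, gives $\pi=\pi'$.
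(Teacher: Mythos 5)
Your proposal is correct and follows essentially the same route as the paper: the same L\'evy--It\^o splitting into small jumps (handled by the It\^o isometry and the finiteness of $\int_{\|z\|_K\le 1}\|z\|_K^2\,\nu(dz)$) and finite-first-moment large jumps, the same observation that additive noise makes the difference of two solutions deterministic and exponentially decaying, and the same uniqueness argument via integrating against two invariant measures. The only point you make more explicit than the paper is the need for an ${\cal H}_1$-analogue of Lemma \ref{25/10/17(2)}, which the paper uses tacitly.
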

\begin{proof}
We need only verify the following assertions: for any bounded subset $U$ of ${\cal H}_1$,

(i) $\lim_{t\to\infty} \sup_{\phi,\,\psi\in U}{\mathbb E}\|Y(t, \phi)-Y(t, \psi)\|_{{\cal H}_1}=0;$

(ii) $\sup_{t\ge 0}\sup_{\phi\in U}{\mathbb E}\|Y(t, \phi)\|_{{\cal H}_1}<\infty.$

\noindent By the well-known L\'evy-It\^o decomposition theorem, one can get
\[
Z(t) =\int_{\|z\|_{K}\le 1} z\tilde N(t, z) + \int_{\|z\|_{K}>1} zN(t, dz),\hskip 20pt t\ge 0.\]
Hence, according to the variation-of-constants formula the mild solution of (\ref{24/05/17(145)}) is given by
\begin{equation}
\label{31/05/17(3)}
\begin{split}
y(t, \phi) = &\,\, G(t)\phi_0 + \int^0_{-r}G(t+\theta)S\phi_1(\theta)d\theta + \int^t_0 \int_{\|z\|_{{K}}\le 1}G(t-s)Lz\tilde N(ds, dz)\\
 &\,\,+ \int^t_0 \int_{\|z\|_{{K}}> 1}G(t-s)Lz N(ds, dz)\\
 =:&\,\, \sum^4_{j=1}I_j(t),
 \end{split}
\end{equation}
By assumption, it is immediate to see that
\begin{equation}
\label{30/10/17(1)}
\sup_{t\ge 0}{\mathbb E}(\|I_1(t)\|_{\mathbb H} + \|I_2(t)\|_{\mathbb H}) \le M(1+r\|S\|_{{\mathscr L}(L^2_r)})\|\phi\|_{{\cal H}_1}<\infty.
\end{equation}
Note from the H\"older inequality, It\^o's isometry and the uniform boundedness of $G(t)$, $t\ge 0$, that
\begin{equation}
\label{31/05/17(4)}
\begin{split}
\sup_{t\ge 0} {\mathbb E}\|I_3(t)\|_{\mathbb H} &\le \Big(\sup_{t\ge 0}\int^t_0 \|G(t-s)L\|^2ds\int_{\|z\|_{K}\le 1}\|z\|^2_{K}\nu(dz)\Big)^{1/2}\\
&\le \Big(\frac{M^2\|R\|^2}{2\gamma}\int_{\|z\|_{K}\le 1}\|z\|^2_{K}\nu(dz)\Big)^{1/2}<\infty
\end{split}
\end{equation}
since $\nu(\cdot)$ is a L\'evy measure. On the other hand, by assumption it follows that
\begin{equation}
\label{31/05/17(5)}
\begin{split}
\sup_{t\ge 0}{\mathbb E}\|I_4(t)\|_{\mathbb H} &\le \int_{\|z\|_{K}>1}\|z\|_{K}\nu(dz)\sup_{t\ge 0}\Big(\int^t_0 \|G(t-s)L\|ds\Big)\\
&= \frac{M\|R\|}{\gamma}\int_{\|z\|_{K}>1}\|z\|_{K}\nu(dz)<\infty.
\end{split}
\end{equation}
Hence, (\ref{31/05/17(3)})-(\ref{31/05/17(5)}) yield the relation
\begin{equation}
\label{31/05/17(6)}
\sup_{t\ge 0}{\mathbb E}\|y(t, \phi)\|_{{\mathbb H}} = M(1+r\|S\|_{{\mathscr L}(L^2_r)})\|\phi\|_{{\cal H}_1} + C<\infty\hskip 10pt\hbox{for some}\hskip 10pt  C>0.
\end{equation}
From (\ref{31/05/17(6)}), it is easy to have that
\[
\sup_{t\ge 0}\int^0_{-r} {\mathbb E}\|y(t+\theta, \phi)\|_{\mathbb H}d\theta \le r(M(1+r\|S\|_{{\mathscr L}(L^2_r)})\|\phi\|_{{\cal H}_1} + C)<\infty,\]
which, in addition to (\ref{31/05/17(6)}), immediately implies that
\[
\sup_{t\ge 0}\sup_{\phi\in U}{\mathbb E}\|Y(t, \phi)\|_{{\cal H}_1}<\infty.\]
On the other hand, for $\phi,\,\psi\in U$, it follows from (\ref{31/05/17(3)}) that
\[
y(t, \phi)-y(t, \psi) =G(t)(\phi_0 -\psi_0) + \int^0_{-r} G(t+\theta)S(\phi_1(\theta)-\psi_1(\theta))d\theta,\]
which implies that
\begin{equation}
\label{05/06/17(1)}
\begin{split}
\sup_{\phi,\,\psi\in U}{\mathbb E}&\|y(t, \phi)-y(t, \psi)\|_{\mathbb H}\\
&\le \sup_{\phi,\,\psi\in U}\Big(M\|\phi_0-\psi_0\|_{\mathbb H} e^{-\gamma t} + Me^{-\gamma t}r\|S\|_{{\mathscr L}(L^2_r)}\int^0_{-r} \|\phi_1(\theta)-\psi_1(\theta)\|_{\mathbb H}d\theta\Big)\\
&\le \sup_{\phi,\,\psi\in U}M(1+r\|S\|_{{\mathscr L}(L^2_r)})\|\phi-\psi\|_{{\cal H}_1}e^{-\gamma t}\\
&\to 0\,\,\,\,\,\,\hbox{as}\,\,\,t\to\infty.
\end{split}
\end{equation}
This further implies that
\begin{equation}
\label{05/06/17(2)}
\begin{split}
\sup_{\phi,\,\psi\in U}&{\mathbb E}\|Y(t, \phi)-Y(t, \psi)\|_{{\cal H}_1}\\
 &\le \sup_{\phi,\,\psi\in U}{\mathbb E}\|y(t, \phi)-y(t, \psi)\|_{\mathbb H} + \sup_{\phi,\,\psi\in U}\int^0_{-r}{\mathbb E}\|y(t+\theta, \phi)-y(t+\theta, \psi)\|_{\mathbb H}d\theta\\
 &\le \sup_{\phi,\,\psi\in U}M(1+r\|S\|_{{\mathscr L}(L^2_r)})(1+ e^{\gamma r})e^{-\gamma t}\\
&\to 0\,\,\,\,\,\,\hbox{as}\,\,\,t\to\infty.
\end{split}
\end{equation}
Hence, there exists a stationary solution $\mu(\cdot)$ of $Y(t, \phi)$ for (\ref{24/05/17(145)}). On the other hand, if $\tilde\mu(\cdot)\in {\mathscr P}({\cal H}_1)$ is also a stationary solution, then for any $f\in C_{LB}({\cal H}_1)$, by the invariance of $\mu(\cdot)$, $\tilde\mu(\cdot)\in {\mathscr P}({\cal H}_1)$, it follows from (\ref{05/06/17(2)}) that
\begin{equation}
\label{05/06/17(10)}
|\mu(f) -\tilde\mu(f)|\le \int_{{\cal H}_1}\int_{{\cal H}_1}|{\mathbb P}_tf(\phi) - {\mathbb P}_tf(\psi)|\mu(d\phi)\tilde\mu(d\psi)\le Ce^{-\gamma t},\hskip 15pt t\ge 0,
\end{equation}
for some $C>0$. This  implies the uniqueness of stationary distributions by taking $t\to\infty$ in (\ref{05/06/17(10)}).
The proof is complete now.
\end{proof}

Examining the proof of Theorem \ref{05/06/17(20)}, the technique employed therein applies to (\ref{24/05/17(145)}) with the L\'evy triple $(0, 0, \nu)$ and a retarded SDE (\ref{12/09/17(13489676)})
with an uniformly bounded diffusion coefficient $R$, i.e., $\sup_{x\in H}\|R(x)\|_{{\mathscr L}(K, H)}< \infty$.

\section{\large Example}

In this section, we shall consider an example to illustrate our theory in the previous sections. To this end, we assume that there exist $\eta: [-r, 0]\to {\mathscr L}(V, H)$ and $\eta: [-r, 0]\to {\mathscr L}(H)$ of bounded variation such that
\[
M(\varphi) = \int^0_{-r} d\eta(\theta)\varphi(\theta)\hskip 15pt \forall\,\varphi\in W^{1, 2}([-r, 0], V),\]
and
\[
N(\varphi) = \int^0_{-r} d\zeta(\theta)\varphi(\theta)\hskip 15pt \forall\,\varphi\in W^{1, 2}([-r, 0], H).\]
Defining $\varrho: W^{1, 2}([-r, 0], V)\times W^{1,2}([-r, 0], H)\to {\mathbb H}$ by $\varrho := \left(\begin{array}{cc}
0&0\\
\eta & \zeta
\end{array}\right)$, then we obtain that $F$ is of the form
\[
F\left(\begin{array}{c}
\varphi_1\\
\varphi_1\end{array}\right) =  \int^0_{-r} d\varrho(\theta)\left(\begin{array}{c}
\varphi_1(\theta)\\
\varphi_1(\theta)
\end{array}\right).\]
Moreover, we can estimate for $a<0$,
\begin{equation}
\label{-7/11/17(10)}
\|F_{a+ib}y\|\le \Big\|\int^0_{-r} d\varrho(\theta) e^{(a+ib)\theta} y\Big\| \le Var(\varrho)^{0}_{-r}e^{-ar}\|y\|\hskip 20pt \forall\, y\in {\mathbb H}.
\end{equation}
For simplicity, let $r=1$ in the sequel.
Hence, Corollary \ref{06/11/17(2)} and Corollary \ref{05/11/17(50)}
yield the following result.
\begin{corollary}
Under the assumptions of Corollary \ref{05/11/17(50)}, the energy $\|y(t)\|^2_{\mathbb H} = \|u(t)\|^2_V + \|u'(t)\|^2_H$, $t\ge 0$, of the second-order abstract Cauchy problem (\ref{12/09/17(134896789)}) with delay decays exponentially if
\[
\sup_{b\in {\mathbb R}}\Big\{\|\eta_{ib}\|_{{\mathscr L}(V, H)} + \|\zeta_{ib}\|_{{\mathscr L}(H)}\Big\} \le \frac{\alpha\kappa}{2\alpha(3+\gamma) +\kappa}\]
where $\alpha$, $\gamma$ and $\kappa$ are given as in Lemma \ref{27/09/17(10)}
 and
Corollary \ref{05/11/17(50)}.
\end{corollary}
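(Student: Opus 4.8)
The plan is to recognise that the exponential decay of the energy $\|y(t)\|^2_{\mathbb H}=\|u(t)\|^2_V+\|u'(t)\|^2_H$ is precisely the exponential stability of the Green operator $G(t)$, $t\ge 0$, associated with (\ref{12/09/17(134896789)}), which by Proposition \ref{25/09/08(10)} is equivalent to $\omega_g({\cal A})<0$. I would therefore aim to verify the hypothesis of Corollary \ref{05/11/17(50)} at the endpoint $a=0$ and read off $\omega_g({\cal A})<0$. To legitimise the choice $a=0$, I would first observe that under the standing assumptions $\Lambda$ is invertible with a lower bound $\kappa>0$ and satisfies the dissipativity inequalities (\ref{21/09/17(1)})--(\ref{21/09/17(2)}), so that Proposition \ref{28/09/17(1)} already gives $\omega_g(\Lambda)<0$; hence $a=0\in(\omega_g(\Lambda),0]$ is admissible.

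Next I would estimate the two quantities in Corollary \ref{05/11/17(50)} at $a=0$. For the delay term I would use the component structure of $F$: writing $F(e^{ib\cdot})y=\bigl(0,\ \eta_{ib}y_1+\zeta_{ib}y_2\bigr)$ for $y=(y_1,y_2)\in{\mathbb H}$, with $\eta_{ib}=\int^0_{-r}d\eta(\theta)e^{ib\theta}$ and $\zeta_{ib}=\int^0_{-r}d\zeta(\theta)e^{ib\theta}$, the triangle inequality together with $\|y_1\|_V\le\|y\|_{\mathbb H}$ and $\|y_2\|_H\le\|y\|_{\mathbb H}$ yields
\[
\|F(e^{ib\cdot})\|_{{\mathscr L}({\mathbb H})}\le \|\eta_{ib}\|_{{\mathscr L}(V,H)}+\|\zeta_{ib}\|_{{\mathscr L}(H)},
\]
so that $\sup_{b\in{\mathbb R}}\|F(e^{ib\cdot})\|\le\sup_{b\in{\mathbb R}}\{\|\eta_{ib}\|_{{\mathscr L}(V,H)}+\|\zeta_{ib}\|_{{\mathscr L}(H)}\}$. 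For the resolvent I would invoke Corollary \ref{06/11/17(2)} to get $\sup_{b\in{\mathbb R}}\|R(ib,\Lambda)\|\le \bigl(2\alpha(3+\gamma)+\kappa\bigr)/(\alpha\kappa)$, whence
\[
\frac{1}{\sup_{b\in{\mathbb R}}\|R(ib,\Lambda)\|}\ge \frac{\alpha\kappa}{2\alpha(3+\gamma)+\kappa}.
\]

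Combining these two estimates with the standing hypothesis shows that $\sup_{b}\|F(e^{ib\cdot})\|$ is dominated by $1/\sup_{b}\|R(ib,\Lambda)\|$, which is exactly the condition required in Corollary \ref{05/11/17(50)} at $a=0$; the conclusion $\omega_g({\cal A})<0$, and thus the exponential decay of the energy, then follows at once.

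The step I expect to be the main obstacle is the discrepancy between the non-strict inequality supplied by the hypothesis and the strict inequality required by Corollary \ref{05/11/17(50)}, whose proof proceeds through $q_0=\sup_b\|F(e^{ib\cdot})R(ib,\Lambda)\|<1$. I would close this gap in one of two ways: either by exploiting that the operator norm $q_0$ is in general genuinely smaller than the product of the two separate suprema (the extreme directions for $F(e^{ib\cdot})$ and for $R(ib,\Lambda)$ need not coincide), or, more robustly, by applying Corollary \ref{05/11/17(50)} at a value $a<0$ close to $0$ and letting $a\uparrow 0$, re-deriving the off-axis resolvent bound from Lemma \ref{27/09/17(10)} and controlling the extra factor $e^{-ar}$ in the delay estimate. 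Either route converts the boundary estimate into a strict one without affecting the final conclusion.
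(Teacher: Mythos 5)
Your proposal is correct and follows essentially the same route as the paper, which gives no written argument beyond invoking Corollary \ref{06/11/17(2)} for the bound $\sup_{b}\|R(ib,\Lambda)\|\le(2\alpha(3+\gamma)+\kappa)/(\alpha\kappa)$ and feeding it, together with the componentwise estimate $\|F(e^{ib\cdot})\|\le\|\eta_{ib}\|_{{\mathscr L}(V,H)}+\|\zeta_{ib}\|_{{\mathscr L}(H)}$, into Corollary \ref{05/11/17(50)} at $a=0$. Your flagging of the mismatch between the non-strict inequality in the hypothesis and the strict inequality required by Corollary \ref{05/11/17(50)} is a genuine point the paper silently ignores, and either of your proposed repairs (observing the resolvent bound is not attained, or applying the criterion at $a<0$ and letting $a\uparrow 0$) closes it without changing the conclusion.
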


 Now let us consider a stochastic damped wave equation with delay
\begin{equation}
\label{20/09/17(341)}
\begin{cases}
d\Big(\displaystyle\frac{\partial u(t, \xi)}{\partial t} + 2\alpha u(t, \xi)\Big) = \frac{\partial^2}{\partial\xi^2}u(t, \xi)dt + c_1\frac{\partial}{\partial \xi}u(t-1, \xi)dt + c_2\frac{\partial}{\partial t}u(t-1, \xi)dt,\\
 \hskip 150pt  +\, \displaystyle\frac{\beta u(t-1, \xi)}{1+|u(t, \xi)|}dw(t),\hskip 10pt (t, \xi)\in {\mathbb R}_+\times (0, 1),\\
u(\theta, \xi)=\phi_{1,1}(\theta, \xi),\,\,\displaystyle\frac{\partial}{\partial\xi}u(\theta, \xi)=\phi_{1, 2}(\theta, \xi),\,\,(\theta, \xi)\in [-1, 0]\times (0, 1),\\
u(t, 0)= u(t, 1)=0,\,\,t\ge 0,
\end{cases}
\end{equation}
where $\alpha>0$, $\beta,\, c_1,\,c_2\in {\mathbb R}$, $c_1\not=0$ or $c_2\not= 0$ and $w$ is a standard one-dimensional Brownian motion. Here we assume that $\phi_{1, 1}(0, \cdot)\in H^1_0(0, 1)$ with the mapping $\theta\to \phi_{1, 1}(\theta, \cdot)\in H^1_0(0, 1)$ belongs to $L^2([-1, 0], H^1_0(0, 1))$, and $\phi_{1, 2}(0, \cdot)\in L^2(0, 1)$ with the mapping $\theta\to \phi_{1, 2}(\theta, \cdot)\in L^2(0, 1)$ belongs to $L^2([-1, 0], L^2(0, 1))$.

To write this problem in an abstract form, consider the space $V= H^1_0(0, 1)$, $H= L^2(0, 1)$ and ${\mathbb H} := V \times H.$ We define the operators
\[
\begin{split}
&A = -\Delta,\,\,\,{\mathscr D}(A) := \{u\in H^1_0:\, \Delta u \in L^2(0, 1)\},\\
&A^{1/2} = \sqrt{-\Delta},\,\,\, {\mathscr D}(A^{1/2}) = H^1_0(0, 1),\\
&Bu = 2\alpha u,\, {\mathscr D}(B) = H,\\
&\eta = c_1 \frac{\partial}{\partial{\xi}}\delta_{-1},\hskip 20pt \zeta = c_2 I\delta_{-1},
\end{split}
\]
where $\delta_{-1}$ is the point evaluation in $-1$. We note that $\|A^{1/2}u\|_{H} = \||\nabla u|\|_H$ for $u\in V$ and $\{u\in H^1_0(0, 1): \, \Delta u\in H\}=H^2_0(0, 1).$

Now let $\beta=0$ and by a direct calculation, it is not difficult to see that
\[
\sup_{b\in {\mathbb R}}\|\eta_{ib}\|_{{\mathscr L}(V, H)}\le |c_1|\hskip 15pt \hbox{and}\hskip 15pt \sup_{b\in {\mathbb R}}\|\zeta_{ib}\|_{{\mathscr L}(H)}\le |c_2|.\]
In Corollary \ref{05/11/17(20)}, where we can consider a damped wave equation without delay, we choose
\[
\Big(\|A^{1/2}BA^{-1/2}\| + 2\|A^{-1/2}\|\Big)^{-1} = \frac{\pi}{4\alpha + 2}.\]
Then we have that the solution in this case decays exponentially if
\[
|c_1|+|c_2| < \frac{\alpha\pi}{36\alpha + \pi}.\]
Now let us consider the stochastic delay partial differential equation (\ref{20/09/17(341)}) with $\beta\not= 0$. Define
\[
\gamma = \ln\frac{\alpha\pi}{(|c_1|+|c_2|)(36\alpha +\pi)}>0,\]
then we have by virtue of (\ref{-7/11/17(10)}) and Theorem \ref{05/06/17(80)}
that whenever
\[
0<|\beta|< \frac{2}{3}\gamma e^{-2\gamma},\]
 stochastic system (\ref{20/09/17(341)}) has a unique stationary process.

\vskip 35pt
\leftline{ {\bf\large References}}
\bibliographystyle{elsarticle-num}
\bibliography{<your-bib-database>}



\smallskip

\end{document}